\definecolor{darkred}{RGB}{139,0,0}
\definecolor{darkblue}{RGB}{0,0,139}
\definecolor{darkgreen}{RGB}{0,100,0}
   \def\MR#1{}
\DeclareMathAlphabet{\mathpzc}{OT1}{pzc}{m}{it}
\newcommand{\Diff}{\ensuremath{\mathrm{Diff}}}
\newcommand{\BDiff}{\ensuremath{\mathrm{BDiff}}}
\newcommand{\Homeo}{\ensuremath{\mathrm{Homeo}}}
\newcommand{\BlockBHomeo}{\smash{\ensuremath{\mathrm{B\widetilde{Homeo}}}}}
\newcommand{\BlockHomeo}{\smash{\ensuremath{\mathrm{\widetilde{Homeo}}}}}
\newcommand{\hAut}{\ensuremath{\mathrm{hAut}}}
\newcommand{\BhAut}{\ensuremath{\mathrm{BhAut}}}
\newcommand{\bP}{\mathrm{bP}}
\newcommand{\bSpin}{\mathrm{bSpin}}
\newcommand{\OO}{\mathrm{O}}
\newcommand{\Hom}{\mathrm{Hom}}
\newcommand{\GL}{\mathrm{GL}}
\newcommand{\SL}{\mathrm{SL}}
\newcommand{\PSL}{\mathrm{PSL}}
\newcommand{\BSL}{\mathrm{BSL}}
\newcommand{\coker}{\mathrm{coker}}
\newcommand{\id}{\mathrm{id}}
\newcommand{\im}{\mathrm{im}}
\newcommand{\ext}{\mathrm{ext}}
\newcommand{\inc}{\mathrm{inc}}
\newcommand{\colim}{\mathrm{colim}}
\newcommand{\interior}{\mathrm{int}}
\newcommand{\Tor}{\mathrm{Tor}}
\newcommand{\ev}{\mathrm{ev}}
\newcommand{\Sm}{\mathrm{Sm}}
\newcommand{\con}{\mathrm{con}}
\newcommand{\TOP}{\mathrm{Top}}
\newcommand{\STOP}{\mathrm{STop}}
\newcommand{\BTOP}{\mathrm{BTop}}
\newcommand{\BSTOP}{\mathrm{BSTop}}
\newcommand{\BSO}{\mathrm{BSO}}
\newcommand{\BO}{\mathrm{BO}}
\newcommand{\SO}{\mathrm{SO}}
\newcommand{\Aut}{\mathrm{Aut}}
\newcommand{\Lift}{\mathrm{Lift}}
\newcommand{\pp}{\mathrm{p}}
\newcommand{\diff}{\mathrm{diff}}
\DeclareMathAlphabet{\mathpzc}{OT1}{pzc}{m}{it}
\newcommand{\catsingle}[1]{\ensuremath{\mathcal{#1}}}
\newcommand{\oH}{\ensuremath{\mathrm{H}}}
\newcommand{\oO}{\ensuremath{\mathrm{O}}}
\newcommand{\bfC}{\ensuremath{\mathbf{C}}}
\newcommand{\bfR}{\ensuremath{\mathbf{R}}}
\newcommand{\bfZ}{\ensuremath{\mathbf{Z}}}
\newcommand{\bfQ}{\ensuremath{\mathbf{Q}}}
\newcommand{\bfS}{\ensuremath{\mathbf{S}}}
\newcommand{\bfU}{\ensuremath{\mathbf{U}}}
\newcommand{\cS}{\ensuremath{\catsingle{S}}}
\newcommand{\cT}{\ensuremath{\catsingle{T}}}
\newcommand{\ra}{\rightarrow}
\newcommand{\lra}{\longrightarrow}
\newcommand{\xra}[1]{\xrightarrow{#1}}
\newcommand{\xlra}[1]{\overset{#1}{\longrightarrow}}
\newcommand{\onto}{\twoheadrightarrow}
\newcommand{\hra}{\hookrightarrow}
\newcommand{\what}{\widehat}
\newcommand{\mr}[1]{{\mathrm{#1}}}
\newcommand{\circled}[1]{\raisebox{.5pt}{\textcircled{\raisebox{-.9pt} {#1}}}}
\renewcommand{\boxed}[1]{\text{\fboxsep=.2em\fbox{\m@th$\displaystyle#1$}}}
\newcommand{\mylabel}[2]{#2\def\@currentlabel{#2}\label{#1}}
\newtheorem{bigthm}{Theorem}
\newtheorem{bigcor}[bigthm]{Corollary}
\newtheorem{thm}{Theorem}[section]
\newtheorem{lem}[thm]{Lemma}
\newtheorem{prop}[thm]{Proposition}
\newtheorem{cor}[thm]{Corollary}
\theoremstyle{definition}
\theoremstyle{remark}
\newtheorem{rem}[thm]{Remark}
\newtheorem*{nrem}{Remark}
\begin{document}

\title{Mapping class groups of exotic tori and actions by $\SL_d(\bfZ)$}

\author{Mauricio Bustamante}
\address{Departamento de Matem\'aticas, Pontificia Universidad Cat\'olica de Chile}
\email{bustamante.math@gmail.com}

\author{Manuel Krannich}
\address{Department of Mathematics, Karlsruhe Institute of Technology, 76131 Karlsruhe, Germany}
\email{krannich@kit.edu}

\author{Alexander Kupers}
\address{Department of Computer and Mathematical Sciences, University of Toronto Scarborough, 1265 Military Trail, Toronto, ON M1C 1A4, Canada}
\email{a.kupers@utoronto.ca}

\author{Bena Tshishiku}
\address{Department of Mathematics, Brown University}
\email{bena\_tshishiku@brown.edu}

%\subjclass[2010]{57R10, 57R52, 20C12, 11F06}

\begin{abstract}We determine for which exotic tori $\cT$ of dimension $d\neq4$ the homomorphism from the group of isotopy classes of orientation-preserving diffeomorphisms of $\cT$ to $\SL_d(\bfZ)$ given by the action on the first homology group is split surjective. As part of the proof we compute the mapping class group of all exotic tori $\cT$ that are obtained from the standard torus by a connected sum with an exotic sphere. Moreover, we show that any nontrivial $\SL_d(\bfZ)$-action on $\cT$ agrees on homology with the standard action, up to an automorphism of $\SL_d(\bfZ)$. When combined, these results in particular show that many exotic tori do not admit any nontrivial differentiable action by $\SL_d(\bfZ)$. \end{abstract}

\maketitle

\tableofcontents

%\section{Introduction}

 A \emph{homotopy $d$-torus} $\cT$ is a $d$-dimensional smooth manifold that is homotopy equivalent to the \emph{standard torus} $T^d=\times^dS^1$ and hence also homeomorphic to it, by a known instance of the Borel conjecture; see \cite{HsiangWallII} for $d>4$, \cite[11.5]{FreedmanQuinn} for $d=4$, \cite[6.5]{Waldhausen3} and the Poincar\'e conjecture for $d=3$. If $\cT$ is not diffeomorphic to the standard torus $T^d$, it is called \emph{exotic}. For instance, given an exotic sphere $\Sigma$ of dimension $k\le d$, the connected sum $(T^k\sharp\Sigma)\times T^{d-k}$ is an exotic $d$-torus.

One of the prominent features of the standard torus $T^d\cong \bfR^d/\bfZ^d$ is that it admits a faithful action $\SL_d(\bfZ)\ra \Diff^+(T^d)$ by $\SL_d(\bfZ)$ through orientation-preserving diffeomorphisms, induced by the linear action of $\SL_d(\bfZ)$ on $\bfR^d$. For a general homotopy $d$-torus $\cT$ one might thus wonder:
\begin{enumerate}[leftmargin=0.8cm]
\item[\mylabel{enum:action}{(A)}]\emph{Is there a faithful action $\SL_d(\bfZ)\ra \Diff^+(\cT)$? If not, is there even any nontrivial action?}\end{enumerate}
As the $\SL_d(\bfZ)$-action on the standard torus splits the homomorphism $\Diff^+(T^d)\ra\SL_d(\bfZ)$ induced by the action on the first homology group $\oH_1(T^d)\cong\pi_1(T^d)\cong \bfZ^d$, it seems natural to approach Question \ref{enum:action} by first considering the following weaker question which is an instance of a high-dimensional version of a Nielsen realisation problem posed by Thurston \cite[Prob.\ 2.6]{kirby}:
\begin{enumerate}[leftmargin=0.8cm]
\item[\mylabel{enum:splitting}{(S)}]\emph{Is the homomorphism $\Diff^+(\cT)\ra \SL_d(\bfZ)$ given by the action on $\oH_1(\cT)$ split surjective?}
\end{enumerate}
This homomorphism factors through the mapping class group $\pi_0\,\Diff^+(\cT)$ of isotopy classes of orientation-preserving diffeomorphisms, so one can weaken the question further to:
\begin{enumerate}[leftmargin=0.8cm]
\item[\mylabel{enum:splitting-isotopy}{(S$_0$)}] \emph{Is the homomorphism $\pi_0\,\Diff^+(\cT)\ra \SL_d(\bfZ)$ given by the action on $\oH_1(\cT)$ split surjective?}
\end{enumerate}
This work establishes several results regarding these three questions. Note that a positive answer to \ref{enum:splitting} implies positive answers to \ref{enum:action} and \ref{enum:splitting-isotopy}. As part of our results, we
\begin{itemize}[leftmargin=0.8cm]
\item answer Question \ref{enum:splitting-isotopy} in all dimensions $d\neq4$,
\item show that Questions \ref{enum:splitting} and \ref{enum:action} are in fact equivalent, and
\item conclude that for many exotic tori the answer to all three questions is negative.
\end{itemize}
In what follows, we describe these results and various extensions of them in more detail.

\subsection*{Splitting the homology action up to isotopy}
Our first main result answers \ref{enum:splitting-isotopy} for $d\neq4$:

\begin{bigthm}\label{bigthm:splitting}
For a homotopy torus $\cT$ of dimension $d\neq4$, the morphism
\[\pi_0\,\Diff^+(\cT)\lra \SL_d(\bfZ)\]
induced by the action on $\oH_1(\cT)$ is split surjective if and only if $\cT$ is diffeomorphic to $T^d\sharp\Sigma$ for a homotopy sphere $\Sigma\in \Theta_d$ such that $\eta\cdot \Sigma\in\Theta_{d+1}$ is divisible by $2$ in the abelian group $\Theta_{d+1}$.
\end{bigthm}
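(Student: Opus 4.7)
For $d\neq 4$, the surjectivity of $\pi_0\,\Diff^+(\cT)\to\SL_d(\bfZ)$ is known, arising from the standard $\SL_d(\bfZ)$-action on $T^d$ combined with transfer along the finite covers by $T^d$ to which every homotopy torus admits. Writing $\cK(\cT)$ for the kernel, the splitting question becomes whether the extension
\[1\to\cK(\cT)\to\pi_0\,\Diff^+(\cT)\to\SL_d(\bfZ)\to 1\]
splits. My first step would be to reduce to the case $\cT=T^d\sharp\Sigma$: a splitting identifies $\cT$ with an $\SL_d(\bfZ)$-fixed class in the smooth structure set $\cS^\Diff(T^d)$, and I would argue via the Hsiang--Shaneson--Wall description of this structure set that the $\SL_d(\bfZ)$-fixed classes are precisely those represented by connected sums $T^d\sharp\Sigma$ with $\Sigma\in\Theta_d$, since the remaining Wall invariants transform in $\SL_d(\bfZ)$-representations with no nonzero invariants. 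For such $\cT$, the paper's own computation of $\pi_0\,\Diff^+(T^d\sharp\Sigma)$ is the essential input, as it describes $\cK(\cT)$ explicitly and in particular isolates a summand isomorphic to $\Theta_{d+1}$ coming from pseudo-isotopies absorbing an $S^1$-factor.

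For $\cT=T^d\sharp\Sigma$, each $A\in\SL_d(\bfZ)$ admits a lift $\tilde f_A\in\pi_0\,\Diff^+(\cT)$ by surjectivity; geometrically, since $A$ fixes the origin of $T^d$ and acts linearly on its tangent space, a natural candidate arises by extending the $A$-action across the connected-sum region modulo the ambiguity in how the $\Sigma$-summand is re-glued. The failure of $A\mapsto\tilde f_A$ to be multiplicative defines a $2$-cocycle on $\SL_d(\bfZ)$ valued in $\cK(\cT)$, whose cohomology class is the splitting obstruction. Projecting to the $\Theta_{d+1}$-summand, I would identify this class with the image of $\eta\cdot\Sigma$ in $\Theta_{d+1}/2$. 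The factor of $2$ reflects a $\bfZ/2$ redundancy in the spinning choice used to splice two lifts: traversing the family of connected-sum disks twice returns to the starting configuration. Divisibility of $\eta\cdot\Sigma$ by $2$ then kills the obstruction and produces a splitting, while its failure obstructs all splittings.

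The main obstacle is the explicit identification of the obstruction cocycle with $\eta\cdot\Sigma\bmod 2$. My approach would use a Steinberg-type presentation of $\SL_d(\bfZ)$ by elementary matrices, reducing the computation to finitely many relations in rank-$2$ or rank-$3$ subgroups. In these subgroups, the relevant geometric manipulation---how a connected-sum locus is transported along a loop of shear-type diffeomorphisms---should match the geometric realization of $\eta$-multiplication $\pi_1^s\otimes\Theta_d\to\Theta_{d+1}$ by rotating an exotic sphere through a full revolution of a normal circle, with the mod-$2$ reduction coming from the sign ambiguity in that rotation. A subsidiary difficulty is the non-abelian structure of $\cK(\cT)$, which I would handle by projecting to the $\Theta_{d+1}$-summand identified above and checking that no other summand contributes to the obstruction.
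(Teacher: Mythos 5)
Your opening claim is wrong and contradicts the paper's own \cref{lem:surjectivity}: the homomorphism $\pi_0\,\Diff^+(\cT)\to\SL_d(\bfZ)$ is \emph{not} surjective for a general homotopy torus $\cT$; it is surjective precisely when $\cT$ is diffeomorphic to $T^d\sharp\Sigma$ for some $\Sigma\in\Theta_d$. The transfer argument you sketch does not exist---a finite cover of a homotopy torus is another homotopy torus (corresponding to a finite-index sublattice of $\bfZ^d$) and need not be diffeomorphic to $T^d$, and even when it is, there is no homomorphism from the mapping class group of the cover to that of the base that would realise the desired surjection. Fortunately, your second sentence repairs this: a splitting does in fact force the concordance class of the smooth structure of $\cT$ in $\Sm^{\con}(T^d)\cong[T^d,\TOP/\OO]$ to be $\SL_d(\bfZ)$-invariant, and the paper's proof of \cref{lem:surjectivity} carries out precisely this computation, showing that the invariants of $[T^d,\TOP/\OO]\cong\bigoplus_{r=1}^d\Hom(\Lambda^r\bfZ^d,\pi_r\TOP/\OO)$ are the top summand $\Theta_d$. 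So the reduction to $\cT=T^d\sharp\Sigma$ is the right step, but you should drop the false surjectivity preamble; surjectivity is a consequence of being of the form $T^d\sharp\Sigma$, not a prior given.

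For the second step your plan is more complicated than necessary given that you take \cref{bigthm:mcg-homotopy-tori} as input. That theorem already gives $\pi_0\,\Diff^+(T^d\sharp\Sigma)$, compatibly with the map to $\SL_d(\bfZ)$, as a semidirect product of $\SL_d(\bfZ)$ (if $\eta\cdot\Sigma$ is divisible by $2$) or of $\overline{\SL}_d(\bfZ)$ (otherwise) with an explicit abelian normal subgroup. In the first case the splitting is manifest; in the second case a splitting of $\pi_0\,\Diff^+(\cT)\to\SL_d(\bfZ)$ would restrict to a splitting of $\overline{\SL}_d(\bfZ)\to\SL_d(\bfZ)$, which is ruled out because that central $\bfZ/2$-extension is nontrivial (\cref{lem:low-degree-homology}\ref{sl-iv}). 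There is no residual cocycle to compute: the entire content of your cocycle paragraph, including the identification of the obstruction with $\eta\cdot\Sigma$ modulo $2$, is already absorbed into the proof of \cref{bigthm:mcg-homotopy-tori} (specifically \cref{lem:pointed-answer-complicated} and \cref{lem:explicit-quotient}). Trying to re-derive it via a Steinberg presentation is both harder and redundant. Also, note that the kernel $\cK(\cT)$ is abelian for $d\ge6$ (\cref{lem:torelli-abelian}), so your remark about its ``non-abelian structure'' is a non-issue in the range where the result has content.
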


Here $\Theta_d$ is Kervaire--Milnor's finite abelian group of homotopy $d$-spheres \cite{KervaireMilnor} and $\eta\cdot \Sigma\in \Theta_{d+1}$ for $\Sigma\in \Theta_d$ is the value of $\eta \otimes \Sigma$ under the Milnor--Munkres--Novikov pairing $\pi_1\,\bfS \otimes \Theta_d \to \Theta_{d+1}$ where $\eta \in \pi_1\,\bfS \cong \bfZ/2$ is the generator of the first stable homotopy group of spheres (see \cite{Bredon} for more on this pairing). The question whether $\eta\cdot \Sigma\in\Theta_{d+1}$ for a given $\Sigma\in\Theta_d$ is divisible by $2$ can in most instances be reduced to a problem in stable homotopy theory which can in turn be solved in many cases. This approach is discussed in \cref{section:ct-homotopy-spheres}, but to already illustrate its practicability at this point, we display in \cref{tab:split} below the first groups of homotopy spheres $\Theta_d$ together with the subgroups $\smash{\Theta^{\mathrm{split}}_d\le \Theta_d}$ of \emph{split spheres}, i.e.\,those $\Sigma\in\Theta_{d}$ for which $\eta\cdot \Sigma$ is divisible by $2$, which is by \cref{bigthm:splitting} equivalent to $\pi_0\,\Diff^+(T^d\sharp\Sigma)\ra \SL_d(\bfZ)$ being split. Note that among the dimensions $d$ for which $\Theta_d$ is nontrivial, there are dimensions in which all spheres are split such as $d=7$, dimensions in which none are split such as $d=8$, as well as dimensions in which some but not all are split such as $d=9$. In \cref{section:ct-homotopy-spheres} we also explain why both cases---the sphere $\Sigma$ being split or not---occur for exotic spheres $\Sigma$ in infinitely many dimensions.

\vspace{+0.7cm}

\begin{table}[h!]
\begin{tabular}{c|c|c|c|c|c|c|c|c}
\toprule
 $d$ & $\le 6$ and  $\neq 4$ & $7$ & $8$ &$9$ &$10$ &$11$ &$12$ &$13$  \\ \midrule
 $\Theta_d$ &$0$ &$\bfZ/28$& $\bfZ/2$ & $\ \ \ \ (\bfZ/2)^{\oplus 2}\oplus \bfZ/2$ &$\bfZ/6$&$\bfZ/992$&$0$&$\bfZ/3$\\
 $\ \ \,\,\Theta^{\mathrm{split}}_d$ &$0$ &$\bfZ/28$& $0$ & $(\bfZ/2)^{\oplus 2}\oplus0$ &$\bfZ/6$&$\bfZ/992$&$0$&$\bfZ/3$\\ \bottomrule
\end{tabular}

\medskip

\begin{tabular}{c|c|c|c|c|c}
\toprule
$14$ & $15$& $16$ &$17$ &$18$ &$19$ \\ \midrule
$\bfZ/2$& $\bfZ/2\oplus\bfZ/8128$&$\bfZ/2$&$\ \ \ \,\,(\bfZ/2)^{\oplus 3}\oplus\bfZ/2 $&$\bfZ/8\oplus \bfZ/2$&\ \ \ \ $\bfZ/2\oplus\bfZ/523264$\ \ \\
$0$ & $\bfZ/2\oplus\bfZ/8128$&$0$&$(\bfZ/2)^{\oplus 3}\oplus0$&$\bfZ/8\oplus \bfZ/2$&\ \ \ \ $\bfZ/2\oplus\bfZ/523264$\ \ \\ \bottomrule
\end{tabular}
\vspace{.5cm}
\caption{The groups $\Theta_d$ of homotopy $d$-spheres for $d\le 19$ together with the subgroups $\smash{\Theta^{\mathrm{split}}_d\le \Theta_d}$ of those $\Sigma\in\Theta_{d}$ for which $\eta\cdot \Sigma$ is divisible by $2$.}
\label{tab:split}
\end{table}

\subsection*{Actions of $\SL_d(\bfZ)$ on homotopy tori}
Our second main result shows that all nontrivial $\SL_d(\bfZ)$-actions on homotopy tori agree on homology with the standard action up to an automorphism.

\begin{bigthm}\label{thm:action-implies-splitting}
Fix $d\ge 3$, a homotopy $d$-torus $\cT$, and an automorphism group \[G\in \{\Diff^+(\cT),\Homeo^+(\cT)\}.\] Any homomorphism $\SL_d(\bfZ)\ra G$ is either trivial or has the property that its postcomposition
\[\SL_d(\bfZ)\lra G\lra\SL_d(\bfZ)\]
with the action on $\oH_1(\cT)$ is an automorphism. Moreover, if also $d\neq 4,5$, then the same holds when replacing $G$ by the group $\pi_0\,G$ of isotopy classes.\end{bigthm}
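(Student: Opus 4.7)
The plan combines Margulis's normal subgroup theorem (NST) and superrigidity for $\SL_d(\bfZ)$ (both available for $d \geq 3$, since $\SL_d(\bfR)$ then has real rank $d-1 \geq 2$) with topological rigidity of finite group actions on the torus. For non-trivial $\phi\colon \SL_d(\bfZ) \to G$, set $f := \psi \circ \phi$ and apply NST to $\ker(\phi) \trianglelefteq \SL_d(\bfZ)$: it is either central (so $\phi$ is essentially injective) or of finite index (so $\phi$ has finite image $Q \subseteq G$).

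In the finite-image case, $Q$ is a perfect quotient of $\SL_d(\bfZ)$ acting effectively on the aspherical homotopy torus $\cT$. Since $\cT$ is homeomorphic to $T^d$ by the Borel conjecture, topological rigidity of finite group actions on $T^d$ (Conner--Raymond and subsequent work) conjugates the $Q$-action to an affine one; triviality on $\oH_1$ would produce a homomorphism from the perfect group $Q$ into the translation group $\bfR^d/\bfZ^d$, necessarily trivial. Hence $\psi|_Q$ is injective, embedding $Q$ as a finite subgroup of $\SL_d(\bfZ)$. But by the congruence subgroup property (Bass--Lazard--Serre--Mennicke), every non-trivial finite quotient of $\SL_d(\bfZ)$ factors through some $\SL_d(\bfZ/N)$, and the minimal such quotient $\PSL_d(\bfZ/2)$ has order exceeding the Minkowski bound on finite subgroups of $\GL_d(\bfZ)$. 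Hence $Q = 1$, contradicting non-triviality of $\phi$.

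When $\phi$ is essentially injective, apply NST to $\ker(f)$. If $\ker(f)$ is central, then $f$ is essentially injective and Margulis superrigidity extends the induced $d$-dimensional rational representation of $\SL_d(\bfZ)$ on $\oH_1(\cT;\bfR) = \bfR^d$ to a continuous representation of $\SL_d(\bfR)$; the classification of such representations restricts $f$ to the standard representation, its dual (inverse-transpose), or their sign-twists, each of which is an automorphism of $\SL_d(\bfZ)$ as desired. If instead $\ker(f)$ has finite index, then $\phi(\ker(f))$ would embed an infinite arithmetic group into the Torelli subgroup $\ker(\psi) \subseteq G$. To rule this out, each element lifts to a $\bfZ^d$-equivariant self-diffeomorphism of $\bfR^d$, and the translation cocycle measuring the failure of a group-theoretic lift should be forced to vanish by property-(T) for $\ker(f)$, leaving an action by translations which is trivial by perfectness---contradicting essential injectivity.

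For the moreover part with $d \neq 4, 5$, the argument transfers to $\pi_0 G$: the finite-image case adapts since finite subgroups of $\pi_0 \Diff^+(\cT)$ realize as honest group actions in those dimensions; the superrigidity step transfers verbatim, depending only on homological data; and the Torelli step is replaced by showing that $\ker(\pi_0 G \to \SL_d(\bfZ))$ contains no infinite arithmetic subgroup, which should follow from the structure of $\pi_0 \Diff^+(\cT)$ determined via the machinery developed for \cref{bigthm:splitting}. The exclusion of $d=4,5$ reflects gaps in these inputs in those low dimensions. The main obstacle throughout is the Torelli/cocycle step---ruling out an infinite arithmetic lattice inside the Torelli group of the torus---which requires a rigidity input beyond the purely algebraic structure theorems for $\SL_d(\bfZ)$.
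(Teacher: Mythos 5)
Your proposal takes a genuinely different route from the paper, and it has several gaps, some of which you flag yourself.

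The paper deliberately avoids Margulis superrigidity, the normal subgroup theorem, and the congruence subgroup property; instead it proves \cref{thm:SLdZ} (every nontrivial endomorphism of $\SL_d(\bfZ)$, $d\ge 3$, is an automorphism) by an elementary analysis of unipotent representations of $\bfU_d$. Granting that, the argument for the present theorem is short: if $\psi\circ\phi$ is \emph{not} an automorphism it is trivial, hence $\phi$ lands in the Torelli group $\Tor^{\TOP}(T^d)$. Mennicke's classification of normal subgroups of $\SL_d(\bfZ)$ shows the image of $\phi$ always contains a nonabelian finite subgroup $H$. One then pulls back the extension $0\to\bfZ^d\to N_{\Homeo(\widetilde{T^d})}(\bfZ^d)\to\Homeo(T^d)\to 0$ along $H$; because $H$ acts trivially on $\bfZ^d$, the Lee--Raymond admissibility result forces the pulled-back extension (and hence $H$) to be abelian, a contradiction. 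For the $\pi_0$-statement the paper simply shows $\pi_0\,\Tor^{\TOP}$ and $\pi_0\,\Tor^{\Diff}$ are abelian (Lemma 4.1), so perfectness of $\SL_d(\bfZ)$ kills any map into them; no realization of finite subgroups by actions is needed.

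Against that, your sketch has genuine gaps. First, you admit yourself that the "cocycle forced to vanish by property (T)" step ruling out an infinite arithmetic subgroup in the Torelli group is unsubstantiated; this is precisely the step the paper handles cleanly by the Lee--Raymond extension argument, and it is the crux of the matter. Second, the assertion that topological rigidity conjugates every finite group action on $T^d$ to an affine action is not a theorem in high dimensions; what is actually available (Conner--Raymond, Lee--Raymond) is the much weaker structural statement the paper invokes, namely that the centralizer of $\bfZ^d$ in the relevant admissible extension is abelian. Third, for the $\pi_0$-case you invoke realization of finite subgroups of $\pi_0\,\Diff^+(\cT)$ by honest actions, which is a version of the Nielsen realization problem and is unavailable in the stated generality; the paper sidesteps this entirely via the abelianness of the Torelli group on $\pi_0$. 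Finally, even where your superrigidity route is sound in spirit (the case $\ker(\psi\circ\phi)$ central), concluding that the resulting endomorphism of $\SL_d(\bfZ)$ is an automorphism still requires a lattice-preservation argument that \cref{thm:SLdZ} supplies explicitly and which your sketch glosses over. In short: the overall strategy (split into Torelli vs.\ essentially injective, handle the Torelli case by rigidity of torus actions) resembles the paper's in outline, but the key rigidity input you need is different, partly incorrect, and partly unfinished, whereas the paper's inputs are elementary and complete.
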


In particular, given any nontrivial homomorphism $\varphi\colon \SL_d(\bfZ)\ra G$, we obtain a splitting of the action $\alpha\colon G\ra \SL_d(\bfZ)$ on first homology, given by $\varphi\circ (\alpha\circ \varphi)^{-1}$. Applying this to $G=\Diff^+(\cT)$ shows that the above questions  \ref{enum:splitting} and \ref{enum:action} are in fact equivalent. Applying it to $\pi_0\,G=\pi_0\,\Diff^+(\cT)$ also shows that \ref{enum:splitting-isotopy} is equivalent to the following isotopy-analogue of \ref{enum:action}.

\begin{enumerate}[leftmargin=0.8cm]
\item[\mylabel{enum:action-isotopy}{(A$_0$)}]\emph{Is there a faithful action $\SL_d(\bfZ)\ra \pi_0\,\Diff^+(\cT)$? If not, is there even any nontrivial action?}\end{enumerate}
Combining these implications with \cref{bigthm:splitting} results in the following corollary which answers all questions \ref{enum:action}, \ref{enum:splitting},  \ref{enum:splitting-isotopy}, \ref{enum:action-isotopy} in the negative for a large class of homotopy tori and partially answers Question 1.4 and Problem 1.5 in work of Bustamante and Tshishiku \cite{BustamanteTshishiku}.
\begin{bigcor}\label{cor:no-action}
Let $\cT$ be a homotopy torus of dimension $d\neq4$. If
\begin{enumerate}
\item  $\cT$ is not diffeomorphic to a connected sum $T^d\sharp\Sigma$ with $\Sigma\in \Theta_d$, or
\item $\cT$ is diffeomorphic to $T^d\sharp\Sigma$ for some $\Sigma\in\Theta_d$ such that $\eta\cdot \Sigma\in\Theta_{d+1}$ is not divisible by $2$,
\end{enumerate}
then every homomorphism from $\SL_d(\bfZ)$ to $\Diff^+(\cT)$ or to $\pi_0\,\Diff^+(\cT)$ is trivial.
\end{bigcor}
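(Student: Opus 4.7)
The plan is to reduce the corollary to a direct combination of Theorems~\ref{bigthm:splitting} and~\ref{thm:action-implies-splitting}. I would argue by contrapositive: supposing there exists a nontrivial homomorphism $\varphi\colon \SL_d(\bfZ)\to G$ with $G\in\{\Diff^+(\cT),\pi_0\,\Diff^+(\cT)\}$, I aim to derive that $\cT$ must have the specific form excluded by (1) and (2).

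Theorem~\ref{thm:action-implies-splitting} applied to $\varphi$ yields that the composition $\alpha\circ\varphi\colon \SL_d(\bfZ)\to \SL_d(\bfZ)$ with the homology action $\alpha\colon G\to \SL_d(\bfZ)$ is an automorphism. Then $s := \varphi\circ (\alpha\circ\varphi)^{-1}\colon \SL_d(\bfZ)\to G$ is a right inverse to $\alpha$, exhibiting it as split surjective. If $G=\Diff^+(\cT)$, I post-compose $s$ with the quotient map $\Diff^+(\cT)\to \pi_0\,\Diff^+(\cT)$ to obtain a splitting of the homology action on isotopy classes; if $G=\pi_0\,\Diff^+(\cT)$ I already have such a splitting. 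In either case, the homomorphism $\pi_0\,\Diff^+(\cT)\to \SL_d(\bfZ)$ is split surjective, so Theorem~\ref{bigthm:splitting} (using $d\ne 4$) forces $\cT$ to be diffeomorphic to $T^d\sharp\Sigma$ for some $\Sigma\in\Theta_d$ with $\eta\cdot\Sigma\in\Theta_{d+1}$ divisible by $2$. This contradicts both hypothesis (1) and hypothesis (2), completing the argument.

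The only point that does not reduce to a direct citation is the case $d=5$ with $G=\pi_0\,\Diff^+(\cT)$, since Theorem~\ref{thm:action-implies-splitting} omits $\pi_0$ in dimensions $4$ and $5$. In that case I would either invoke the smooth version of Theorem~\ref{thm:action-implies-splitting} at the level of $\Diff^+(\cT)$ and then reduce to $\pi_0$, or apply a direct rigidity input for $\SL_5(\bfZ)$-representations. Since the dichotomy "trivial versus automorphism on homology" is the substantive content supplied by Theorem~\ref{thm:action-implies-splitting}, the only genuine obstacle anywhere in the proof is ensuring this dichotomy is available uniformly across the allowed dimensions; granted that, the remainder is a formal combination of A and B.
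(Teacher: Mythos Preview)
Your argument matches the paper's deduction exactly: Theorem~\ref{thm:action-implies-splitting} turns a nontrivial action into a splitting of the homology action, and then Theorem~\ref{bigthm:splitting} forces $\cT\cong T^d\sharp\Sigma$ with $\eta\cdot\Sigma$ divisible by~$2$. The paper does not write out a separate proof of Corollary~\ref{cor:no-action}; it simply states it as the combination of these two theorems via the splitting $\varphi\circ(\alpha\circ\varphi)^{-1}$, which is precisely what you wrote.

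On the $d=5$, $G=\pi_0\,\Diff^+(\cT)$ case: you are right to flag it, since Theorem~\ref{thm:action-implies-splitting} excludes $d=5$ for the $\pi_0$ statement. However, your first proposed fix does not work: a homomorphism $\SL_5(\bfZ)\to\pi_0\,\Diff^+(\cT)$ cannot in general be lifted to $\Diff^+(\cT)$, so you cannot ``invoke the smooth version and then reduce to $\pi_0$''. The obstruction in the paper's proof of Theorem~\ref{thm:action-implies-splitting} is that Lemma~\ref{lem:torelli-abelian} (abelianness of $\pi_0\,\Tor^{\Diff}(\cT)$) is only established for $d\neq 4,5$; this is the footnoted Igusa issue. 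The paper does not supply an independent argument for the $\pi_0$ claim at $d=5$ either, so this case carries the same caveat there as in your write-up.
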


\begin{nrem}[The Zimmer programme]One motivation for considering Question \ref{enum:action} stems from the \emph{Zimmer programme}, part of which studies actions of $\SL_d(\bfZ)$ on manifolds. For instance, it follows from a version of Zimmer's conjecture, now a theorem due to Brown--Fisher--Hurtado \cite{BFH}, that $\SL_d(\bfZ)$ does not act faithfully on smooth manifolds of dimension $\le d-2$. For actions of $\SL_d(\bfZ)$ on $d$-manifolds, there is a conjectural classification by Fisher--Melnick \cite[Conjecture 3.6]{fisher-melnick} which would imply that if $\SL_d(\bfZ)$ acts faithfully on a homotopy $d$-torus $\cT$, then $\cT$ is the standard torus. \cref{cor:no-action} implies this for a large class of homotopy tori.
\end{nrem}

\begin{nrem}[Regularity] Our results are phrased in terms of the group $\Diff^+(\cT)$ of $C^\infty$-diffeomorphisms, but they also hold for the groups $\Diff^{+,k}(\cT)$ of $C^k$-diffeomorphisms for finite $ k\ge1$. For Theorems \ref{bigthm:splitting} and \ref{bigthm:mcg-homotopy-tori}, this follows from the isomorphism $\pi_0\,\Diff^{+}(\cT)\cong \pi_0\,\Diff^{+,k}(\cT)$. For \cref{thm:action-implies-splitting} it follows from the observation that the statement for the group $\Homeo^+(\cT)$ also implies the statement for all its subgroups. The deduction of  \cref{cor:no-action} from Theorems \ref{bigthm:splitting} and \ref{thm:action-implies-splitting} works the same way. In particular, this shows that homotopy tori as in \cref{cor:no-action} do not admit any $C^1$-action by $\SL_d(\bfZ)$.
\end{nrem}

We conclude this introduction by explaining two results featuring in the proofs of  \cref{bigthm:splitting} and \cref{thm:action-implies-splitting} that may be of independent interest.

\subsection*{Mapping class groups of exotic tori}
As an ingredient for the proof of \cref{bigthm:splitting}, we determine the mapping class groups $\pi_0\,\Diff^+(\cT)$ of exotic tori of the form $\cT=T^d\sharp\Sigma$ for $\Sigma\in \Theta_d$ in all dimensions $d\ge7$ in terms of the known mapping class group $\pi_0\,\Diff^+(T^d)$ of the standard torus. Note that $\Theta_d$ is trivial when $d\le 6$ and $d\neq 4$, so in these cases there is nothing to show. To state the result, we first recall the previously known description of $\pi_0\,\Diff^+(T^d)$. As mentioned above, the action of $\SL_d(\bfZ)$ on $T^d$ induces a splitting of the action map $\pi_0\,\Diff^+(T^d)\ra\SL_d(\bfZ)$, so there is a semidirect product decomposition
\[\pi_0\,\Diff^+(T^d)= \SL_d(\bfZ)\ltimes \pi_0\,\Tor^\Diff(T^d)\quad\text{with}\quad \pi_0\,\Tor^\Diff(T^d)\coloneq\ker\big(\pi_0\,\Diff^+(T^d)\ra\SL_d(\bfZ)\big).\]
For $d\ge6$, the kernel $\pi_0\,\Tor^\Diff(T^d)$ is abelian and isomorphic to the sum of $\bfZ[\SL_d(\bfZ)]$-modules \begin{equation}\label{equ:torelli}\textstyle{
 \Omega\coloneq \Big(\bigoplus_{0 \leq j \leq d} (\Lambda^{j} \bfZ^d)\otimes \Theta_{d-j+1}\Big)\oplus \Big( (\Lambda^{d-2} \bfZ^d)\otimes \bfZ/2\Big)\oplus   \Big((\bfZ/2)[\bfZ^d]/(\bfZ/2)[1]\Big)_{C_2}}\end{equation}
where $\SL_d(\bfZ)$ acts through the standard action on $\bfZ^d$, and $(-)_{C_2}$ denotes the coinvariants with respect to the involution induced by multiplication by $-1$ on $\bfZ^d$ (see \cite[Theorem 4.1, Remark (3) on p.~9]{HatcherConcordance}\footnote{\cite[Theorem 4.1]{HatcherConcordance} asserts that the computation of $ \pi_0\,\Tor^\Diff(T^d)$ also holds for $d=5$. However, this relies on a claim attributed to Igusa (see the middle of p.\,7 loc.cit.) for which---to our knowledge---no proof has been provided so far.} and \cite[Theorem 2.5]{HsiangSharpe}). In addition to
this description of $\pi_0\,\Tor^\Diff(T^d)$, our identification of $\pi_0\,\Diff^+(T^d\sharp\Sigma)$ involves the aforementioned homotopy sphere $\eta\cdot\Sigma\in\Theta_{d+1}$ and the unique nontrivial central extension
\[0\lra\bfZ/2\lra\overline{\SL}_d(\bfZ)\lra \SL_d(\bfZ)\lra 0\]
of $\SL_d(\bfZ)$ by $\bfZ/2$; see \cref{sec:extensions}. Our result identifies the group $\pi_0\,\Diff^+(T^d\#\Sigma)$ as a semidirect product of $\SL_d(\bfZ)$ or $\overline{\SL}_d(\bfZ)$ acting on a quotient of $\Omega$ by a nontrivial subgroup depending on $\Sigma$ which is contained in the summand $\Theta_{d+1}\oplus (\bfZ^d\otimes\Theta_d)$ of \eqref{equ:torelli} corresponding to the terms $j=0,1$.

\begin{bigthm}\label{bigthm:mcg-homotopy-tori} For a homotopy sphere $\Sigma\in\Theta_{d}$ of dimension $d\ge7$, there is an isomorphism
\[\pi_0\,\Diff^+(T^d\#\Sigma)\cong\begin{cases}\overline{\SL}_d(\bfZ)\ltimes\Big[
 \Omega/\big(\langle\eta\cdot\Sigma\rangle \oplus (\bfZ^d\otimes \langle\Sigma\rangle\big) \Big]&\text{if }\eta\cdot \Sigma\in\Theta_{d+1}\text{is not divisible by }2\\
\hfil \SL_d(\bfZ)\ltimes\Big[\Omega/ \big(\bfZ^d\otimes \langle\Sigma\rangle\big)\Big] &\text{if }\eta\cdot \Sigma\in\Theta_{d+1}\text{ is divisible by }2
\end{cases}\]
which is compatible with the homomorphisms to $\SL_d(\bfZ)$.
\end{bigthm}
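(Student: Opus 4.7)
The plan is to reduce to the known mapping class group of the standard torus via the disk-fixing fibration. Consider
\[\Diff^+(T^d\#\Sigma, D^d) \lra \Diff^+(T^d\#\Sigma) \lra \Emb^+(D^d, T^d\#\Sigma),\]
whose base is homotopy equivalent to the oriented frame bundle $T^d\times\SO(d)$ by parallelizability of $T^d\#\Sigma$. The induced long exact sequence on $\pi_*$ reads
\[\bfZ^d\oplus\bfZ/2 \xrightarrow{\ \delta\ } \pi_0\,\Diff^+(T^d\#\Sigma, D^d) \lra \pi_0\,\Diff^+(T^d\#\Sigma) \lra 0,\]
so the problem breaks into: identifying the relative mapping class group, computing the connecting map $\delta$, and pinning down the resulting extension of $\SL_d(\bfZ)$.

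For the first step, I would place the disk $D^d$ inside the ``$\Sigma$-plug'' of the connected sum, so that $(T^d\#\Sigma)\setminus D^d \cong (T^d\setminus D^d) \cup_{S^{d-1}} W$ with $W = \Sigma\setminus(D^d\sqcup D^d)$ a simply-connected $h$-cobordism from $S^{d-1}$ to itself, hence diffeomorphic to $S^{d-1}\times I$ for $d\ge 6$ by the $h$-cobordism theorem. This gives an $H_1$-equivariant isomorphism
\[\pi_0\,\Diff^+(T^d\#\Sigma, D^d) \;\cong\; \pi_0\,\Diff^+(T^d, D^d),\]
and the right-hand side is known from the literature feeding into \eqref{equ:torelli} to be $\SL_d(\bfZ)\ltimes\Omega$ up to a central $\bfZ/2$ twist. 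Crucially, the trivialization of $W$ is only canonical up to $\pi_0\,\Diff(S^{d-1}\times I,\partial)\cong\Theta_{d+1}$, and this non-canonicity is what transports the $\Sigma$-dependence into the subsequent calculation of $\delta$.

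Next, for $\delta$: the generator of $\bfZ/2 = \pi_1\,\SO(d)$ maps, via the rotate-the-disk construction combined with the above non-canonical identification, to the Milnor--Munkres--Novikov product $\eta\cdot\Sigma\in\Theta_{d+1}\subset\Omega$; and each standard generator $e_i$ of $\bfZ^d=\pi_1(T^d)$ maps to $e_i\otimes\Sigma\in\bfZ^d\otimes\Theta_d\subset\Omega$, produced by dragging the $\Sigma$-plug around the $i$-th coordinate circle of $T^d$. Thus $\im(\delta) = \langle\eta\cdot\Sigma\rangle\oplus(\bfZ^d\otimes\langle\Sigma\rangle)$, which is exactly the denominator appearing in the theorem. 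Finally, the linear $\SL_d(\bfZ)$-splitting of $\pi_0\,\Diff^+(T^d)$ lifts, up to the central $\bfZ/2\subset\pi_1\,\SO(d)$ identified above, to a splitting by the nontrivial central extension $\overline{\SL}_d(\bfZ)$: when $\eta\cdot\Sigma=2x$ in $\Theta_{d+1}$, one modifies this section by $x$ to descend to a genuine $\SL_d(\bfZ)$-splitting, while otherwise the central $\bfZ/2$ remains as an essential obstruction, giving the dichotomy. The main obstacle will be the precise computation of $\delta$---specifically, justifying that the non-canonical trivialization of $W$ produces exactly the class $\eta\cdot\Sigma$ under the Milnor--Munkres--Novikov pairing, and verifying naturality with respect to the $\SL_d(\bfZ)$-action modulo the central twist.
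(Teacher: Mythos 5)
Your overall strategy follows the paper's: identify the relative mapping class group via a trivialization of the complement of the $\Sigma$-plug, compute a connecting map, and quotient. The main difference in packaging is that the paper factors the long exact sequence through the intermediate space $\BDiff^+_\ast(T^d\sharp\Sigma)$ (treating the collar twist and the point-pushing map in separate steps), while you combine them using the frame bundle $T^d\sharp\Sigma\times\SO(d)\simeq\Emb^+(D^d,T^d\sharp\Sigma)$; this is a harmless repackaging.

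However, there is a genuine gap in your computation of $\delta$ on the $\bfZ/2 = \pi_1\,\SO(d)$ factor. You assert $\delta(1) = \eta\cdot\Sigma \in \Theta_{d+1}\subset\Omega$. This cannot be correct: taking $\Sigma = S^d$ would give $\delta(1)=0$, yet the image of the generator of $\pi_1\,\SO(d)$ in $\pi_0\,\Diff^+(T^d, D^d)$ is the collar twist of the standard torus, which is a nontrivial central element — precisely the element $t_d$ making $\pi_0\,\Diff^+(T^d,D^d)\cong\overline{\SL}_d(\bfZ)\ltimes\Omega$ rather than $\SL_d(\bfZ)\ltimes\Omega$ (\cref{cor:collar-twist-injectivity}, \cref{lem:mcg-torus-bdy}). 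The correct statement is $\delta(1) = (t_d,\,\eta\cdot\Sigma)$: the contribution $\eta\cdot\Sigma$ is additive to $t_d$, and obtaining this decomposition requires the ``pants relation'' $t_{M\sharp N} = (t_M\natural\,\id) + (\id\natural\,t_N)$ of \cref{prop:ct-ctdsum-product} together with the Kreck--Levine identification $T_\Sigma = \eta\cdot\Sigma$ of \cref{prop:Kreck-Levine}; your appeal to ``non-canonicity of the trivialization of $W$'' does not produce this, since changing the trivialization by $\Theta_{d+1}$ acts by conjugation by a central element and hence does not change the identification. As stated, your $\im(\delta) = \langle\eta\cdot\Sigma\rangle\oplus(\bfZ^d\otimes\langle\Sigma\rangle)$ would yield $\overline{\SL}_d(\bfZ)\ltimes\bigl[\Omega/(\langle\eta\cdot\Sigma\rangle\oplus\bfZ^d\otimes\langle\Sigma\rangle)\bigr]$ \emph{unconditionally}, missing the dichotomy entirely.

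Consequently, the final step — passing from the quotient $\bigl(\overline{\SL}_d(\bfZ)\ltimes\Omega\bigr)/\bigl(\langle(t_d,\eta\cdot\Sigma)\rangle\oplus\bfZ^d\otimes\langle\Sigma\rangle\bigr)$ to the two semidirect product descriptions in the theorem — is where the real work lies, and your proposal leaves it as a one-line heuristic (``modify the section by $x$''). The nontrivial point is precisely whether quotienting the central extension $\overline{\SL}_d(\bfZ)$ by the anti-diagonal $\langle(t_d,\eta\cdot\Sigma)\rangle$ produces a split extension of $\SL_d(\bfZ)$, and this depends on the divisibility of $\eta\cdot\Sigma$. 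The paper handles this in \cref{lem:explicit-quotient} via an explicit cocycle/cohomology argument; you would need to supply a comparably careful argument here.
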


In particular, this result shows that the mapping class group $\pi_0\,\Diff^+(T^d\#\Sigma)$ for $\Sigma\in\Theta_d$ is given by a quotient of $\overline{\SL}_d(\bfZ)\ltimes\Omega$ by a finite abelian subgroup which is always of order at least $2$ and has order precisely $2$ if and only if $\Sigma$ is the standard sphere, so from this mapping class point of view the standard torus admits ``the most symmetries'', as one would expect.

\subsection*{Endomorphisms of $\SL_d(\bfZ)$}
As an ingredient for the proof of \cref{thm:action-implies-splitting}, we prove the following classification results for endomorphisms of $\SL_d(\bfZ)$ for $d\ge3$:

\begin{bigthm}\label{thm:SLdZ}
Fix $d\ge3$. Every nontrivial endomorphism of $\SL_d(\bfZ)$ is an automorphism. Moreover, all automorphisms of $\SL_d(\bfZ)$ agree, up to postcomposition with a conjugation by an element in $\GL_d(\bfZ)$, with either the identity or the inverse-transpose automorphism.
\end{bigthm}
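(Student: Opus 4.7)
The ``Moreover'' part---classifying automorphisms of $\SL_d(\bfZ)$ for $d\ge 3$, up to inner automorphism by $\GL_d(\bfZ)$, as the identity or the inverse-transpose involution---is a classical theorem of Hua and Reiner that I would simply cite (e.g.\ via O'Meara's textbook exposition). So suppose $\varphi\colon\SL_d(\bfZ)\to\SL_d(\bfZ)$ is a nontrivial endomorphism; we have to show that it is an automorphism. By Margulis' normal subgroup theorem, applicable since $\SL_d(\bfZ)$ is a higher-rank lattice for $d\ge 3$, the kernel $\ker\varphi$ is either contained in the centre $\{\pm I\}$ or has finite index.

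I would first rule out the finite-index case. If $\ker\varphi$ had finite index, the image of $\varphi$ would be a nontrivial, and necessarily perfect, finite subgroup $H\le\SL_d(\bfZ)$, of order at most the Minkowski bound $M(d)$. By the congruence subgroup property (Mennicke, Bass--Lazard--Serre), $\varphi$ factors through $\SL_d(\bfZ/m)$ for some $m$, whose only nonabelian simple composition factors are the groups $\PSL_d(\bfF_p)$ for primes $p\mid m$. Since $H$ is perfect, at least one such group must appear as a composition factor of $H$, forcing $|\PSL_d(\bfF_p)|$ to divide $|H|\le M(d)$. A direct estimate gives $|\PSL_d(\bfF_p)|>M(d)$ for every prime $p$ once $d\ge 3$, a contradiction.

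In the remaining case, $\ker\varphi\subseteq\{\pm I\}$ and the image is infinite, hence not contained in any compact subgroup of $\SL_d(\bfR)$ (since $\SL_d(\bfZ)$ is discrete in $\SL_d(\bfR)$). Margulis' super-rigidity theorem then extends $\varphi$ to a continuous homomorphism $\Phi\colon\SL_d(\bfR)\to\SL_d(\bfR)$, which by Lie algebra simplicity and nontriviality must be an automorphism, hence of the form $\Phi(A)=gA^\epsilon g^{-1}$ for some $g\in\GL_d(\bfR)$ and $\epsilon\in\{\pm 1\}$ (where $A^{-1}$ in the exponent denotes $(A^{-1})^T$). The inclusion $\Phi(\SL_d(\bfZ))\subseteq\SL_d(\bfZ)$, combined with the invariance of Haar covolume under $\Phi$ as an automorphism of the semisimple Lie group $\SL_d(\bfR)$, upgrades to the equality $\Phi(\SL_d(\bfZ))=\SL_d(\bfZ)$, so $g$ lies in the normaliser of $\SL_d(\bfZ)$ in $\GL_d(\bfR)$. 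A standard argument using the $\bfR$-irreducibility of the defining action (which identifies the centraliser as $\bfR^\times$) identifies this normaliser with $\bfR^\times\cdot\GL_d(\bfZ)$, so $g$ may be taken in $\GL_d(\bfZ)$ and $\varphi$ has the required form. The main technical obstacle I anticipate is the finite-index case, specifically verifying $|\PSL_d(\bfF_p)|>M(d)$ directly for the low-dimensional cases $d\in\{3,4,5\}$ and small primes $p$, which is elementary but requires some bookkeeping.
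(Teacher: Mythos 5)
Your proposal is correct in outline, but it takes a genuinely different route — in fact exactly the one the paper deliberately avoids. The remark after \cref{thm:SLdZ} notes that the paper's proof ``does neither rely on Margulis' superrigidity or normal subgroup theorem, nor on the congruence subgroup property'' and anticipates that ``using these results, there are likely other proofs'': yours is such a proof. You use the normal subgroup theorem to place $\ker\varphi$ either in the centre or of finite index, rule out the finite-index case via the congruence subgroup property and a Minkowski bound on finite subgroups of $\GL_d(\bfZ)$, and then apply superrigidity to extend an almost-faithful $\varphi$ to an automorphism $\Phi$ of $\SL_d(\bfR)$, concluding by computing the normaliser of $\SL_d(\bfZ)$ in $\GL_d(\bfR)$. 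The paper's argument instead analyses the images $\phi(E_{ij})$ of the elementary matrices directly: \cref{lem:unipotent} and \cref{lem:dim3} show, using only the commutator relations in the unipotent group $\bfU_d$ together with elementary linear algebra over $\bfC$, that each $\phi(E_{ij})$ is unipotent with $\phi(E_{ij})-\id$ of rank one and that after an optional twist by inverse-transpose the fixed hyperplanes of the $\phi(E_{ij})$ with common second index coincide; \cref{cor:big-to-small}, \cref{cor:apply-lemma}, and \cref{thm:rep} then extract from this hyperplane configuration an explicit integral basis $v_1,\dots,v_d$ on which $\phi$ acts by the defining representation, and a final step shows the lattice they span must be $\ell\bfZ^d$. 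What the paper's route buys, beyond being self-contained, is \cref{cor:big-to-small} (triviality of all homomorphisms $\SL_d(\bfZ)\to\GL_m(\bfC)$ for $m<d$) as a byproduct that is reused inside \cref{thm:rep}; your route is faster to state but imports much heavier input, and when fleshed out needs the usual care in the superrigidity step to handle the a priori possibility that the Zariski closure of $\im\varphi$ has a solvable radical or compact factors. One small observation: your separate citation of Hua--Reiner for the ``Moreover'' part is redundant, since your superrigidity argument already produces $\varphi(A)=gA^\epsilon g^{-1}$ with $g\in\GL_d(\bfZ)$ for every nontrivial endomorphism, and in particular for every automorphism; the paper likewise reproves the classification via \cref{thm:rep} rather than citing it.
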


\begin{nrem}Some comments on \cref{thm:SLdZ}.
\begin{enumerate}[leftmargin=*]
\item The proof is ``elementary'' in that it does neither rely on Margulis' superrigidity or normal subgroup theorem, nor on the congruence subgroup property. Using these results, there are likely other proofs. The argument we give was hinted at by Ian Agol in a comment to a question on MathOverflow \cite{mathoverflow} and sketched by Uri Bader in the case $d=3$ as a response to the question (however this sketch has a small gap; see Remarks \ref{rmk:bader} and \ref{rmk:counterexample}).
\item For $d=2$, the statement of \cref{thm:SLdZ} fails: consider the composition
\[\SL_2(\bfZ)\lra \oH_1(\SL_2(\bfZ))\cong\bfZ/12\lra\SL_2(\bfZ)\]
where the first arrow is abelianisation and the second sends a generator to $-\id\in\SL_2(\bfZ)$.
\item The second part of \cref{thm:SLdZ} holds more generally; see \cite[Theorem A]{OMeara}.
\end{enumerate}
\end{nrem}

\subsection*{Acknowledgements}
We would like to thank Wilberd van der Kallen for helpful comments. AK acknowledges the support of the Natural Sciences and Engineering Research Council of Canada (NSERC) [funding reference number 512156 and 512250]. AK was supported by an Alfred J.~Sloan Research Fellowship. BT is supported by NSF grant DMS-2104346. MB is supported by ANID Fondecyt Iniciaci\'on en Investigaci\'on grant 11220330.

\section{Collar twists} As preparation to the proof of Theorems \ref{bigthm:splitting} and \ref{bigthm:mcg-homotopy-tori}, we collect various results on a certain map $\SO(d)\ra \BDiff_\partial(M\backslash{\interior(D^d)})$ defined by twisting a collar of the complement of an embedded $d$-disc in a closed smooth $d$-manifold $M$. After explaining the construction, we discuss how this map behaves under taking products and connected sums, followed by some results on the collar twisting map for specific choices of $M$, first homotopy spheres and then homotopy tori.

\subsection{The collar twist}
Given a closed connected oriented $d$-dimensional manifold $M$, we write
\[M^\circ \coloneq M \backslash \interior(D^d)\]
for the complement of a fixed embedded disc $D^d \subset M$ that is compatible with the orientation (which is unique up to isotopy), and we write $\Diff_\partial(M^\circ)$ for the group of diffeomorphisms of $M^\circ$ that fix a neighbourhood of the boundary sphere $\partial M^\circ = S^{d-1}$ pointwise, equipped with the smooth topology. The latter is homotopy equivalent to the larger group $\Diff_{T_*M}(M)$ of diffeomorphisms of $M$ that fix the centre of the disc $\ast\in M$ as well as the tangent space at this point. The group $\Diff_{T_*M}(M)$ is the fibre of the fibration $\smash{d\colon \Diff_{\ast}^+(M)\ra \GL^{+}_d(\bfR)}$ assigning to a diffeomorphism that fixes $\ast$ its (orientation-preserving) derivative at that point, so after delooping and using the equivalence $\smash{\GL^{+}_d(\bfR) \simeq \SO(d)}$, there is a homotopy fibration sequence
\begin{equation}\label{equ:collar-twist-sequence}
\BDiff_\partial(M^\circ)\xlra{\ext} \BDiff^+_\ast(M)\xlra{d} \BSO(d).
\end{equation}
where $\ext$ is induced by extending a diffeomorphism of $M^{\circ}$ to $M$ by the identity. The connecting map $\SO(d)\simeq\Omega\BSO(d)\ra \BDiff_\partial(M^\circ)$ has the following geometric description: there is a homomorphism $\Omega\SO(d)\ra \Diff_\partial([0,1] \times S^{d-1})$ which sends a smooth loop $\gamma\in\Omega\SO(d)$ that is constant near the endpoints to the self-diffeomorphism of $[0,1] \times S^{d-1}$ given by mapping  $(t,x)$ to $(t,\gamma(t)\cdot x)$, and a homomorphism $\ext\colon \Diff_\partial([0,1] \times S^{d-1})\ra \Diff_\partial(M^{\circ})$ induced by a choice of collar of the boundary sphere in $M^{\circ}$. Delooping their composition gives a map
\[
\Upsilon_M\colon \SO(d)\lra \BDiff_\partial(M^\circ)
\]
that agrees with the aforementioned connecting map; see e.g.\,\cite[p.\,9]{KrannichExotic}. Following Section 3 of loc.cit., we call $\Upsilon_M$ the \emph{collar twisting map of $M$}. This map is relevant to the study of the mapping class groups of $M$ and $M^\circ$, since the sequence \eqref{equ:collar-twist-sequence} induces an exact sequence of groups
\begin{equation}\label{equ:ct-exact-sequence}\left(\pi_1\,\SO(d)\cong\begin{cases}\bfZ&\text{ if }d=2\\
\bfZ/2&\text{ if }d\ge3\\
\end{cases}\right) \xlra{(\Upsilon_M)_\ast} \pi_0\,\Diff_\partial(M^\circ)\xlra{\ext} \pi_0\,\Diff^+_\ast(M)\lra 0,\end{equation} so the second morphism in this sequence is an isomorphism if and only if the image
\[t_M \coloneqq (\Upsilon_M)_\ast(1) \in\pi_0\,\Diff_\partial(M^{\circ})\]
of the standard generator of the leftmost group under the first map $(\Upsilon_M)_\ast$ is trivial. We call this element the \emph{collar twist of $M$}. Note that the collar twist lies in the centre of $\pi_0\,\Diff_\partial(M^{\circ})$, because the image of the connecting map $\pi_2(X)\ra \pi_1(F)$ in the long exact sequence of homotopy groups for any fibration $F\ra E\ra X$ has this property. Alternatively, one could use that the collar twist is supported in a collar and that every diffeomorphism fixing boundary can be isotoped to also fix any chosen collar, thereby having disjoint support from the collar twist.

\subsection{Collar twists of products and connected sums}
The following proposition shows that collar twisting maps behave well with respect to products and connected sums. Here and in what follows, we identify $(M\sharp N)^\circ$ with the boundary connected sum  $M^\circ\natural N^\circ$ via the preferred isotopy class of diffeomorphisms between these two manifolds.

\begin{prop}\label{prop:ct-ctdsum-product}Let $M$ and $N$ be closed oriented connected manifolds of dimension $m$ and $n$.
\begin{enumerate}
\item \label{enum:ct-product} The compositions
\[\SO(m)\subset\SO(m+n)\xlra{\Upsilon_{M\times N}} \BDiff_\partial((M \times N)^\circ)\]
and
\[\SO(m) \xlra{\Upsilon_M}\BDiff_\partial(M^\circ)\xrightarrow{(-) \times \id_N}\BDiff_\partial(M^\circ \times N)\xlra{\ext}\BDiff_\partial((M \times N)^\circ)\]
are homotopic. In particular,
		\[t_{M\times N}=t_M\times\id_N\in \pi_0\,\Diff_\partial((M \times N)^\circ)\quad\text{for }m\ge2.
		\]
\item \label{enum:ct-ctdsum} If $M$ and $N$ are of the same dimension $d=m=n$, then the map
\[\SO(d)\xlra{\Upsilon_{M\sharp N}} \BDiff_\partial((M \sharp N)^\circ)=\BDiff_\partial(M^\circ\natural N^\circ)\]
and the composition
\[\hspace{1.2cm}\SO(d)\xlra{\mr{diag}}\SO(d)\times\SO(d)\xrightarrow{\Upsilon_{M}\times \Upsilon_{N}}\BDiff_\partial(M^\circ)\times \BDiff_\partial(N^\circ)\xrightarrow{(-)\natural(-)}\BDiff_\partial(M^\circ\natural N^\circ)\]
are homotopic after restriction to the subspace $\SO(d-1)\subset\SO(d)$. In particular, we have
		\[t_{M\sharp N}=(t_{M}\natural\id_{N^\circ})+(\id_{M^\circ}\natural t_{N})\in \pi_0\,\Diff_\partial(M^\circ \natural N^\circ)\quad\text{for }d\ge3.
		\]
\end{enumerate}
\end{prop}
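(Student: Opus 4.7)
For part (i), the plan is to apply the naturality of the connecting map in a morphism of homotopy fibre sequences. Consider the sequence \eqref{equ:collar-twist-sequence} for both $M$ and $M\times N$, and construct a map between them with base $\BSO(m)\hookrightarrow\BSO(m+n)$ induced by $A\mapsto A\oplus\id_n$, middle $f\mapsto f\times\id_N$, and fibre the composition $\BDiff_\partial(M^\circ)\xra{(-)\times\id_N}\BDiff_\partial(M^\circ\times N)\xra{\ext}\BDiff_\partial((M\times N)^\circ)$. The middle-base square commutes because $d(f\times\id_N)_{(\ast_M,\ast_N)}=d_{\ast_M} f\oplus\id_{T_{\ast_N}N}$, and the fibre-middle square commutes provided the disc removed from $M\times N$ is chosen inside $\interior(D^m)\times N$, so that $M^\circ\times N$ includes as a codimension-zero submanifold of $(M\times N)^\circ$. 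Naturality of the connecting map then yields the asserted homotopy, and the formula $t_{M\times N}=t_M\times\id_N$ follows by tracking the generator of $\pi_1\SO(m)$, which surjects onto the generator of $\pi_1\SO(m+n)$ for $m\ge 2$.

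For part (ii), I would proceed by a direct geometric argument. Identify $(M\sharp N)^\circ$ with $M^\circ\natural N^\circ$ by taking the embedded disc in $M\sharp N$ to straddle the connected-sum neck; this decomposes the boundary sphere $\partial=S^{d-1}$ as two hemispheres $D^{d-1}_M\cup_{S^{d-2}}D^{d-1}_N$ joined along the equator. Choose coordinates so that the polar axis of $S^{d-1}$ passes through the centres of these two hemispheres, and identify $\SO(d-1)\subset\SO(d)$ as the stabiliser of this axis. For any $\gamma\in\Omega\SO(d-1)$, the collar-twist rotation $(x,t)\mapsto(\gamma(t)\cdot x, t)$ of $S^{d-1}\times [0,1]$ preserves the hemisphere decomposition, so $\Upsilon_{M\sharp N}(\gamma)$ splits as a composition of a twist supported in a collar of $D^{d-1}_M$ (lying in $M^\circ$) and a twist supported in a collar of $D^{d-1}_N$ (lying in $N^\circ$). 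Each of these should then be identified, up to isotopy, with the extension to $M^\circ\natural N^\circ$ of $\Upsilon_M(\gamma)$ resp.\ $\Upsilon_N(\gamma)$; restricting $\gamma$ to lie in $\SO(d-1)$ is precisely what makes this hemisphere decomposition possible.

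The main obstacle is this last identification: $\Upsilon_M(\gamma)$ naturally rotates a collar of the \emph{entire} sphere $\partial M^\circ=S^{d-1}_M$, which in $M^\circ\natural N^\circ$ contains not only the hemisphere $D^{d-1}_M$ but also the ``neck'' hemisphere that becomes interior after gluing. The residual rotation on a one-sided collar of this interior disc must be shown to cancel against the corresponding contribution from $\Upsilon_N(\gamma)$ on the opposite side, leaving a diffeomorphism of the resulting bicollar that is isotopically trivial in $\Diff_\partial(M^\circ\natural N^\circ)$. The formula $t_{M\sharp N}=(t_M\natural\id_{N^\circ})+(\id_{M^\circ}\natural t_N)$ then follows by applying the space-level homotopy to the generator of $\pi_1\SO(d-1)\twoheadrightarrow\pi_1\SO(d)=\bfZ/2$, which is surjective for $d\ge 3$.
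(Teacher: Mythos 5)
For part \ref{enum:ct-product}, your approach via naturality of the connecting map in a morphism of homotopy fibre sequences is sound and differs from the paper's, which instead invokes the auxiliary maps $\Phi_P$ together with Lemmas~\ref{lem:twist-is-t} and \ref{lem:extend-t}. Your route is arguably more elementary; the one point you should make fully explicit is the identification of the induced map on homotopy fibres with $\ext\circ((-)\times\id_N)$. This follows because the strict square
\[
\begin{tikzcd}
\Diff_\partial(M^\circ)\arrow[r,hook]\arrow[d,"\ext\circ((-)\times\id_N)"']&\Diff_{T_\ast M}(M)\arrow[d,"(-)\times\id_N"]\\
\Diff_\partial((M\times N)^\circ)\arrow[r,hook]&\Diff_{T_\ast(M\times N)}(M\times N)
\end{tikzcd}
\]
commutes on the nose once the removed disc $D^{m+n}\subset M\times N$ is chosen inside $\interior(D^m)\times N$, and the horizontal inclusions are the usual homotopy equivalences between the strict and geometric models of the fibre.

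For part \ref{enum:ct-ctdsum}, however, you have not supplied a proof: you explicitly name the crux (``the residual rotation on a one-sided collar of this interior disc must be shown to cancel \dots leaving a diffeomorphism of the resulting bicollar that is isotopically trivial'') and then stop. This is indeed the content of the statement, not a routine verification. Concretely, $\Upsilon_M(\gamma)\natural\id_{N^\circ}$ rotates a full collar of $\partial M^\circ$, half of which is now interior to $M^\circ\natural N^\circ$, and similarly for the $N$-side; the claim that composing these two gives, up to isotopy rel the new boundary, the rotation $\Upsilon_{M\sharp N}(\gamma)$ of the new boundary collar is exactly what the ``pants relation'' asserts and requires an argument. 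The paper resolves this by reducing to a universal statement about a pair-of-pants bordism $W\colon S^{d-1}\sqcup S^{d-1}\leadsto S^{d-1}$ realised as $D^d\setminus\interior(e(D^1\sqcup D^1)\times D^{d-1})$, where the identity $t_{\mathrm{in}}\simeq t_{\mathrm{out}}$ becomes an instance of Lemma~\ref{lem:extend-t} (applied with $P=D^1$, $Q=D^1\sqcup D^1$, $M=D^d$), and Lemma~\ref{lem:extend-t} in turn is proved by contracting the space of admissible bump functions. Without some version of this input, or an explicit isotopy as in \cite[Lemma~2.5]{chen-tshishiku}, the argument for \ref{enum:ct-ctdsum} is incomplete.
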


In order to prove \cref{prop:ct-ctdsum-product}, it is convenient to view the collar twisting map as the instance $P=\ast$ of a more general construction for a compact smooth $p$-dimensional manifold $P$ equipped with an embedding $P\times D^{d-p}\subset M$.
First, one extends the latter inclusion to an embedding $\smash{\overline{P}\times D_2^{d-p}\subset M}$ where $\smash{D_2^{d-p}\subset \bfR^{d-p}}$ is the disc of radius $2$ and $\overline{P} \coloneqq P \cup_{\partial P\times\{0\}} (\partial P \times [0,1])$ is obtained by attaching an external collar to $P$. This extension is unique up to isotopy. Given a smooth function $\lambda \colon \smash{\overline{P}} \times [0,2] \to [0,1]$ and a smooth loop $\gamma \in \Omega \SO(d-p)$ that is constant near the endpoints, consider the diffeomorphism $\smash{\phi_\lambda(\gamma) \colon \overline{P} \times D_2^{d-p} \ra \overline{P} \times D_2^{d-p}}$ by sending $(p,x)$ to $(p,\gamma(\lambda(p,\| x\|))\cdot x)$. In other words, thinking of $\smash{\overline{P} \times D_2^{d-p}}$ as foliated by the leaves $\smash{S_{p,r}\coloneq \{p\} \times D_r^{d-p}}$ for $\smash{p \in \overline{P}}$ and $r \in [0,2]$, the diffeomorphism $\phi_\lambda(\gamma)$ preserves the leaves and acts on the leaf $S_{p,r}$ by rotation with the element at time $\lambda(p,r)$ of the loop $\gamma$. If one additionally assumes that
\begin{enumerate}
	\item\label{enum:lambda-i} $\lambda= 1$ on a neighbourhood of $P \times D^{d-p}$ where $\smash{D^{d-p}=D^{d-p}_1\subset D_2^{d-p}}$ is the unit disc,
	\item\label{enum:lambda-ii} $\lambda = 0$ on a neighbourhood of $\partial(\overline{P} \times D_2^{d-p})$,
\end{enumerate}
then $\phi_\lambda(\gamma)$ agrees with the identity on a neighbourhood of $P \times D^{d-p}\subset \overline{P} \times D_2^{d-p}$ so restricts to a diffeomorphism of the complement. This diffeomorphism of the complement extends via the identity to a  diffeomorphism of $M \backslash \mr{int}(P \times D^{d-p})$ fixing a neighbourhood of the boundary pointwise, so we obtain a map
\[ \phi_\lambda(-)\colon \Omega \SO(d-p) \lra \Diff_\partial(M \backslash \mr{int}(P \times D^{d-p})) \]
which depends continuously on $\lambda$ and is a homomorphism with respect to pointwise multiplication on the domain and composition on the target. Since the space of smooth functions $\lambda$ satisfying (i) and (ii) is contractible by linear interpolation, the delooping of $\phi_{\lambda}(-)$
\[\Phi_P \colon \SO(d-p) \lra \BDiff_\partial(M \backslash \mr{int}(P \times D^{d-p}))\] is independent of $\lambda$ up to homotopy, so only depends on the isotopy class of the embedding $P\times D^{d-p}\subset M$. This map generalises the collar twisting map in the following sense.

\begin{lem}\label{lem:twist-is-t}For $0 \leq p \leq d$, the maps \vspace{-0.1cm}
\[
\def\arraystretch{1.5}
\begin{array}{c@{\hskip 0.05cm} l@{\hskip 0.05cm} c@{\hskip 0.1cm} l@{\hskip 0.1cm} l@{\hskip 0cm} l@{\hskip 0cm}}
\Upsilon_M|_{\SO(d-p)}&\colon&  \SO(d-p)&\lra&\BDiff_\partial(M^\circ)\quad\text{and}\\
\Phi_{D^p}&\colon&\SO(d-p)&\lra& \BDiff_\partial(M\backslash \mr{int}(D^p \times D^{d-p}))\overset{\ext}{\simeq} \BDiff_\partial(M^\circ).\\
\end{array}
\]
are homotopic. Here the embedding $D^p\times D^{d-p}\subset M$ is chosen to be compatible with the orientation.
\end{lem}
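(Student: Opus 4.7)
The strategy is to realize both maps as connecting maps of compatible homotopy fiber sequences and conclude by naturality. By construction, $\Upsilon_M|_{\SO(d-p)}$ is the connecting map of the pullback of the fiber sequence \eqref{equ:collar-twist-sequence} along the block inclusion $\iota\colon \BSO(d-p)\hookrightarrow \BSO(d)$ induced by $A\mapsto \mathrm{diag}(I_p,A)$. I produce an analogous sequence whose connecting map is $\Phi_{D^p}$.

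Let $\Diff_{D^p}^+(M)$ denote the topological group of orientation-preserving diffeomorphisms of $M$ that fix $D^p$ pointwise. Taking the derivative in the normal directions along $D^p$ and evaluating at any interior point $\ast\in D^p$ (permissible up to homotopy since $D^p$ is contractible) yields a map $d_\nu\colon \Diff_{D^p}^+(M)\to \GL_{d-p}^+(\bfR)\simeq \SO(d-p)$. Using that the space of tubular neighbourhoods of $D^p$ is contractible, its homotopy fibre is weakly equivalent via extension by the identity to $\Diff_\partial(M\setminus \interior(D^p\times D^{d-p}))$, so delooping produces a fibre sequence
\[\BDiff_\partial(M\setminus \interior(D^p\times D^{d-p}))\lra \BDiff_{D^p}^+(M)\xlra{\oB d_\nu}\BSO(d-p).\]
The connecting map of this sequence is $\Phi_{D^p}$: for any smooth loop $\gamma\in \Omega\SO(d-p)$ constant near the endpoints and any admissible $\lambda$, the formula $f_t(q,x) \coloneqq (q,\gamma(t\cdot \lambda(q,\|x\|))\cdot x)$ defines a path in $\Diff_{D^p}^+(M)$ starting at $f_0=\id$ with $d_\nu(f_t)=\gamma(t)$, and its endpoint is $f_1=\phi_\lambda(\gamma)$, which lies in the fibre and represents $\Phi_{D^p}(\gamma)$.

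The inclusion $\Diff_{D^p}^+(M)\hookrightarrow \Diff_\ast^+(M)$ for $\ast\in \interior(D^p)$, together with $\iota$, assembles into a commutative diagram of fibre sequences
\[\begin{tikzcd}[column sep=small]
\BDiff_\partial(M\setminus \interior(D^p\times D^{d-p})) \ar[r] \ar[d, "\ext"', "\simeq"] & \BDiff_{D^p}^+(M) \ar[r, "\oB d_\nu"] \ar[d] & \BSO(d-p) \ar[d, "\iota", hook] \\
\BDiff_\partial(M^\circ) \ar[r] & \BDiff_\ast^+(M) \ar[r] & \BSO(d).
\end{tikzcd}\]
The right square commutes up to a contractible choice because the full derivative at $\ast$ of a $D^p$-fixing diffeomorphism is of the block form $\smash{\bigl(\begin{smallmatrix} I_p & \bullet \\ 0 & A \end{smallmatrix}\bigr)}$ with $A=d_\nu$, and the off-diagonal block can be contracted linearly to zero, yielding $\iota\circ d_\nu$. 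The left square commutes because both horizontal maps extend a boundary-fixing diffeomorphism of the complement by the identity, and $D^d$ is isotopic inside $M$ to a smoothing of $D^p\times D^{d-p}$ as an oriented $d$-disc. Naturality of the connecting map now gives the desired homotopy $\ext\circ \Phi_{D^p}\simeq \Upsilon_M|_{\SO(d-p)}$.

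The main obstacle is the identification of the connecting map of the top fibre sequence with the explicitly defined $\Phi_{D^p}$; the explicit path $t\mapsto f_t$ above makes this concrete, after which the argument reduces to formal naturality.
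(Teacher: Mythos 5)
Your proof is correct, but it takes a genuinely different route from the paper's. The paper works entirely before delooping: it simply writes out both maps $\Omega\SO(d-p)\to\Diff_\partial(M^\circ)$ explicitly, observes that each is an instance of the same formula $(q,x)\mapsto(q,\gamma(\lambda(q,\|x\|))\cdot x)$ (restricted to the complement of $D^d\subset D^p\times D^{d-p}$ and extended by the identity) for some admissible damping function $\lambda$, and concludes by noting that the space of such $\lambda$ is convex, hence contractible. Your argument instead realises $\Phi_{D^p}$ as the connecting map of a new fibre sequence $\BDiff_\partial(M\setminus\interior(D^p\times D^{d-p}))\to\BDiff^+_{D^p}(M)\to\BSO(d-p)$, maps it to the sequence \eqref{equ:collar-twist-sequence}, and invokes naturality of connecting maps. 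This is more conceptual and in particular exhibits a delooping of $\Phi_{D^p}$ (your sequence specialises to \eqref{equ:collar-twist-sequence} at $p=0$), but it buys this at the cost of a genuinely nontrivial input that you state only in passing: the identification of the homotopy fibre of $d_\nu$ with $\Diff_\partial(M\setminus\interior(D^p\times D^{d-p}))$ is a $D^p$-parametrised tubular neighbourhood statement (a family version of $\Diff_\partial(M^\circ)\simeq\Diff_{T_\ast M}(M)$), and one also needs to know that the full normal derivative map is a fibration. These are standard, but they require more machinery than the paper's elementary direct comparison, which gets by with nothing beyond the convexity of the set of admissible $\lambda$'s. Your explicit lift $f_t(q,x)=(q,\gamma(t\cdot\lambda(q,\|x\|))\cdot x)$ identifying the connecting map with $\Phi_{D^p}$ is correct and is the key computation that makes the naturality argument land on the stated conclusion.
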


\begin{proof}It suffices to show that the two maps $\Omega\SO(d-p)\ra \Diff_\partial(M^\circ)$ before delooping are homotopic as maps of topological groups. Going through the construction, one sees that both maps are instances of the following construction applied to smooth loops $\gamma\in\SO(d-p)$ that are constant near the ends: pick a smooth map $\smash{\lambda \colon D^p_2 \times [0,2] \to [0,1]}$ which is $0$ in a neighbourhood of $\smash{\partial(D^p_2 \times D_2^{d-p})}$, and $1$ in a neighbourhood of $D^d$, consider the self-diffeomorphism of $\smash{D^{p-2}_2\times D_2^p}$ sending $(p,x)$ to  $(p,\gamma(\lambda(p,\|x\|))\cdot x)$, restrict it to a diffeomorphism of $\smash{(D^{p-2}_2\times D^p)\backslash \interior(D^d)}$, and extend the result to a diffeomorphism of $M^{\circ}$ by the identity. As the space of choices for $\lambda$ is contractible by linear interpolation, all maps constructed this way are homotopic.
\end{proof}

A similar argument also shows the following naturality property of the map $\Phi_P$.

\begin{lem}\label{lem:extend-t} Given a compact submanifold $Q \subset \interior(P)$ of codimension $0$, the map
\[\SO(d-p) \xlra{\Phi_Q} \BDiff_\partial(M \backslash \mr{int}(Q \times D^{d-p}))\]
and the composition
\[\SO(d-p) \overset{\Phi_P}\lra \BDiff_\partial(M \backslash \interior(P \times D^{d-p})) \overset{\ext}\lra \BDiff_\partial(M \backslash \interior(Q \times D^{d-p}))\]
are homotopic. Here the embedding $Q\times D^{d-p}\subset M$ is the restriction of the embedding $P\times D^{d-p}\subset M$.\end{lem}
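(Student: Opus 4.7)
The plan is to prove this in direct analogy with the proof of \cref{lem:twist-is-t}: exhibit both maps as instances of a common construction parametrised by a contractible space of auxiliary data, so that they are automatically homotopic. Since $Q \subset \interior(P)$ is compact, I would first choose the external collar extension $\overline{Q}$ of $Q$ so that $\overline{Q} \subset \interior(P)$, and take the embedding $\overline{Q} \times D_2^{d-p} \hookrightarrow M$ to be the restriction of $\overline{P} \times D_2^{d-p} \hookrightarrow M$. Since $\Phi_Q$ is independent of such choices up to homotopy, this is harmless.

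Next I would enlarge the parameter space of $\lambda$'s by considering the convex set $\Lambda$ of smooth functions $\lambda \colon \overline{P} \times [0,2] \to [0,1]$ satisfying the weakened conditions that $\lambda = 1$ on a neighbourhood of $Q \times D^{d-p}$ and $\lambda = 0$ on a neighbourhood of $\partial(\overline{P} \times D_2^{d-p})$. For any $\lambda \in \Lambda$, the formula $(p,x) \mapsto (p, \gamma(\lambda(p, \|x\|)) \cdot x)$ defines a self-diffeomorphism of $\overline{P} \times D_2^{d-p}$ that is the identity both near $Q \times D^{d-p}$ and near the outer boundary. Extending by the identity yields a homomorphism $\phi_\lambda(-) \colon \Omega\SO(d-p) \to \Diff_\partial(M \setminus \interior(Q \times D^{d-p}))$ depending continuously on $\lambda$. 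Because $\Lambda$ is contractible, any two choices in $\Lambda$ produce homotopic homomorphisms, and hence homotopic delooped maps $\SO(d-p) \to \BDiff_\partial(M \setminus \interior(Q \times D^{d-p}))$.

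I would then verify that both maps in the statement arise from elements of $\Lambda$. Any $\lambda_P$ used to define $\Phi_P$ satisfies the stronger condition $\lambda_P = 1$ near $P \times D^{d-p}$, and hence lies in $\Lambda$; the associated $\phi_{\lambda_P}(\gamma)$ agrees with $\ext$ applied to the $P$-construction, because $\phi_{\lambda_P}(\gamma)$ is already the identity on $(P \times D^{d-p}) \setminus \interior(Q \times D^{d-p})$ by condition (i) for $\lambda_P$. Conversely, any $\lambda_Q$ used to define $\Phi_Q$ extends by zero to a smooth function $\tilde\lambda_Q$ on $\overline{P} \times [0,2]$ (smoothness coming from $\lambda_Q$ vanishing near $\partial \overline{Q} \times \{0\}$ and from $\overline{Q} \subset \interior(P)$), which lies in $\Lambda$; since $\gamma(0)$ is the identity in $\SO(d-p)$, one checks that $\phi_{\tilde\lambda_Q}(\gamma) = \phi_{\lambda_Q}(\gamma)$ as diffeomorphisms of $M \setminus \interior(Q \times D^{d-p})$. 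The lemma follows.

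The main subtlety I anticipate is the bookkeeping between the two ambient tubular neighbourhoods $\overline{P} \times D_2^{d-p}$ and $\overline{Q} \times D_2^{d-p}$, and verifying smoothness of the extension $\tilde\lambda_Q$ at the interface; no conceptually new ingredients beyond those already present in \cref{lem:twist-is-t} are required.
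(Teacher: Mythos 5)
Your proposal is correct and takes essentially the approach the paper intends when it says ``a similar argument also shows'' after the proof of \cref{lem:twist-is-t}: exhibit both maps as instances of a single construction depending on an auxiliary function $\lambda$ drawn from a convex (hence contractible) parameter space $\Lambda$. Your handling of the details — arranging $\overline{Q} \subset \interior(P)$, noting that any $\lambda_P$ lies in $\Lambda$ and produces the same diffeomorphism as $\ext \circ \Phi_P$ since it is already the identity on $(P \times D^{d-p}) \setminus \interior(Q \times D^{d-p})$, and extending $\lambda_Q$ by zero using that it vanishes near $\partial \overline{Q}$ — is exactly the bookkeeping needed to carry out that argument.
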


Equipped with Lemmas \ref{lem:twist-is-t} and \ref{lem:extend-t}, we now turn to the proof of \cref{prop:ct-ctdsum-product}.

\begin{proof}[Proof of \cref{prop:ct-ctdsum-product}] For part \ref{enum:ct-product}, note that the composition $\SO(m) \to \BDiff_\partial(M^\circ \times N)$ is an instance of $\Phi_N$ using the embedding $D^m\times N\subset M\times N$, so its postcomposition with $\ext\colon \BDiff_\partial(M^\circ\times N)\ra  \BDiff_\partial((M\times N)\backslash \interior(D^m\times D^n))$ is homotopic to $\Phi_{D^n}$ by \cref{lem:extend-t}, which in turn implies the claim as a result of \cref{lem:twist-is-t}. For part \ref{enum:ct-ctdsum}, view $(M\sharp N)^\circ$ as being obtained from $M^\circ \sqcup N^\circ$ by gluing on a pair-of-pants bordism $W \colon S^{d-1} \sqcup S^{d-1} \leadsto S^{d-1}$. To show the claim, it suffices to show that the maps $t_\mr{in},t_\mr{out} \colon \SO(d-1) \ra \BDiff_\partial(W)$ are homotopic, where $t_\mr{in}$ simultaneously twists collars of the two incoming boundary spheres and  $t_\mr{out}$ twists a collar of the outgoing boundary sphere. Viewing $W$ as $D^d\backslash \interior((e(D^1\sqcup D^1))\times D^{d-1})$ for an embedding $e\colon D^1\sqcup D^1\hookrightarrow\interior(D^1)$, the map $t_\mr{in}$ is given by $\Upsilon_{D^1\sqcup D^1}\colon \SO(d-1)\ra \BDiff_\partial(D^d\backslash \interior(e(D^1\sqcup D^1)\times D^{d-1}))$ and the map $t_{\mr{out}}$ as the composition of $\Upsilon_{D^1}\colon \SO(d-1)\ra \BDiff_\partial(D^d\backslash \interior(D^1\times D^{d-1})$ with $\ext\colon \BDiff_\partial(D^d\backslash \interior(D^1\times D^{d-1}))\ra \BDiff_\partial(D^d\backslash \interior((e(D^1\sqcup D^1))\times D^{d-1}))$, so the claim follows from \cref{lem:extend-t} applied to $P=D^1$, $Q=D^1\sqcup D^1$, and $M=D^d$.
\end{proof}

\begin{rem}
\cref{prop:ct-ctdsum-product} \ref{enum:ct-ctdsum} is a more general version of the ``pants relation" in \cite[Lemma 2.5]{chen-tshishiku}, where a proof of this relation is given by constructing an explicit isotopy.
\end{rem}

\subsection{Collar twists of exotic spheres}\label{section:ct-homotopy-spheres}We now turn to the collar twisting map $\Upsilon_{\Sigma}$ for homotopy spheres $\Sigma$, but we actually restrict our attention to the collar twist $t_{\Sigma}\in\pi_0\,\Diff_\partial(\Sigma^\circ)$ it induces on fundamental groups. We begin with a recollection of the classification of homotopy spheres.

\subsubsection{Classification of homotopy spheres}\label{sec:classification}Recall (e.g.\,from \cite[p.\,90-91]{LevineHomotopySpheres}) that Kervaire--Milnor's finite abelian group $\Theta_d$ of homotopy $d$-spheres \cite{KervaireMilnor} fits for $d \geq 5$ into an exact sequence \begin{equation}\label{equ:KM-sequ}0\ra \bP_{d+1}\ra \Theta_{d}\xra{[-]} \coker(J)_{d}\ra\begin{cases}\bfZ/2&\text{if }d= 2^k-2,\text{ for some }k\\0&\text{otherwise}
\end{cases}\end{equation}
where $\bP_{d+1}\le \Theta_{d}$ is a certain cyclic subgroup and $\coker(J)_d$ is the cokernel of the stable $J$-homomorphism $\pi_d\,\oO\ra\pi_d\,\bfS$ from the homotopy groups of the stable orthogonal group (which are known by Bott periodicity) to the stable homotopy groups of spheres. The order of the cyclic subgroup $\bP_{d+1}$ is known in all cases except $d=125$ (combine \cite[Corollaries 2.2, 3.20, Theorem 4.9]{LevineHomotopySpheres} with \cite[Theorem 1.3]{HHR}):
\[\sharp\bP_{d+1}=\begin{cases}
2^{2k-2}(2^{2k-1}-1)\,\mathrm{num}(4|B_{2k}|/k)&\text{if }d=4k-1\text{ for $k\ge2$}\\
2&\text{if }d=4k+1\text{ for $k\ge1$}\text{ but }d\neq 2^{k}-3\text{ if }k\le 7\\
0&\text{if }d\text{ is even or if }d= 2^{k}-3\text{ for }k\le 6\\
2\text{ or }0&\text{if }d=2^{k}-3\text{ for }k=7.
\end{cases}\]
The map $\coker(J)_{d}\ra\bfZ/2$ in the sequence \eqref{equ:KM-sequ} is known to be trivial as long as $d\neq2^{k}-2$ for $k> 7$. It is known to be nontrivial for $k\le6$, but the case $k=7$ (i.e. $d=126$) is still open (see \cite[Theorem 1.4]{HHR}). The question whether $\bP_{126}=0$ or $\bP_{126}=\bfZ/2$ and the question whether $\coker(J)_{126}\ra \bfZ/2$ is surjective or not (these questions turn out to be equivalent; see \cite[p.\,88]{Levine}) is the last remaining case of the \emph{Kervaire invariant one problem}. The upshot of this discussion is that apart from the two problematic dimensions $d=125, 126$, the group $\Theta_d$ is described in terms of the group $\coker(J)_{d}$ up to extension problems. In most cases, also these extension problems have been resolved:

\begin{itemize}[leftmargin=0.5cm]
\item For $d$ even, $\bP_{d+1}$ vanishes and the map to $\Theta_{d}\xra{[-]} \coker(J)_{d}$ is an isomorphism as long as $d\neq 2^{k}-2$ for $k> 7$, so in these cases there are no extension problems.
\item For $d=2^{k}-2$ with $k\le6$, we have an exact sequence $0\ra \Theta_{d}\ra \coker(J)_d\ra \bfZ/2\ra0$ which admits a splitting since in these dimensions $\coker(J)_d$ is known to be annihilated by $2$ (see e.g.\,the table \cite[Table 1]{IsaksenWangXu}), so $ \Theta_{d}\cong\coker(J)_d\oplus\bfZ/2$. For $k=7$ the question whether the map $\coker(J)_{126}\ra\bfZ/2$ is split surjective (rather than just surjective which is open too; see above) is known as the \emph{strong Kervaire invariant one problem}.
\item For $d\equiv 3\pmod 4$, the map $\Theta_{d}\ra \coker(J)_d$ is split surjective by \cite[Theorem 1.3]{BrumfielI} or \cite[Theorem 5]{Frank}, so $\Theta_{d}\cong \bP_{d+1}\oplus \coker(J)_{d}$.
\item For $d\equiv 1\pmod 4$, the map $\Theta_{d}\ra \coker(J)_d$ is split surjective if $d$ is not of the form $2^k-3$ for some $k\ge1$ by \cite[Theorem 1.2]{BrumfielII} and \cite[Theorem 1.1]{BrumfielIII}, so $\Theta_{d}\cong \bfZ/2\oplus \coker(J)_{d}$.
\end{itemize}

\subsubsection{Collar twists of homotopy spheres and the Milnor--Munkres--Novikov pairing}We begin the discussion of  collar twists of homotopy spheres with a general observation: if $M$ is a closed oriented manifold of dimension $d\ge5$ and $\Sigma\in\Theta_d$ is a homotopy sphere, then writing $\overline{\Sigma}\in\Theta_d$ for the inverse sphere obtained by reversing the orientation, the maps
\[
\def\arraystretch{1.5}
\begin{array}{c@{\hskip 0.05cm} l@{\hskip 0.05cm} c@{\hskip 0.1cm} l@{\hskip 0.1cm} l@{\hskip 0cm} l@{\hskip 0cm}}
(-)\natural\id_{\Sigma^\circ}&\colon&  \BDiff_\partial(M^\circ)&\lra&\BDiff_\partial(M^\circ\natural\Sigma^\circ)=\BDiff_\partial((M\sharp\Sigma)^\circ)\\
(-)\natural\id_{\overline{\Sigma}^\circ}&\colon&\BDiff_\partial((M\sharp\Sigma)^\circ)&\lra&\BDiff_\partial((M\sharp\Sigma)^\circ\natural\overline{\Sigma}^\circ)=\BDiff_\partial(M^\circ).\\
\end{array}
\]
are inverse homotopy equivalences, so in particular induce an isomorphism $\pi_0\,\Diff_\partial(M^\circ)\cong \pi_0\,\Diff_\partial((M\sharp\Sigma)^\circ)$ on fundamental groups. For $M=S^d$, combining the latter with the usual isomorphism $\pi_0\,\Diff_\partial(D^d)\cong\Theta_{d+1}$ given by gluing together two copies $D^{d+1}$ along their boundary via diffeomorphisms of $S^{d}$ supported on a hemisphere results in a chain of isomorphisms
\[\pi_0\,\Diff_\partial(\Sigma^\circ)\cong\pi_0\,\Diff_\partial(D^d)\cong\Theta_{d+1}
\]
We write \[T_\Sigma\in\Theta_{d+1}\]
for the image of the collar twist $t_{\Sigma}\in  \pi_0\,\Diff_\partial(\Sigma^\circ)$ under these isomorphisms. This defines a set-theoretical function $T_{(-)}\colon \Theta_d\ra \Theta_{d+1}$ which can be rephrased (see \cref{prop:Kreck-Levine} below) in terms of a well-known construction in the study of homotopy spheres, namely the bilinear Milnor--Munkres--Novikov pairing (see e.g.\,\cite{Bredon}) $\pi_k\,\bfS\otimes \Theta_d\ra \Theta_{k+d}$ for $k<d-1$. The latter is related to the multiplication in the stable homotopy groups of spheres by a commutative diagram
\begin{equation}\label{equ:compatility}
\begin{tikzcd}
\pi_k\,\bfS\otimes \Theta_d\rar{(-)\cdot(-)}\dar[swap]{\id_{\pi_k\bfS}\otimes[-]}&[20pt] \Theta_{k+d}\dar{[-]}\\
\pi_k\,\bfS\otimes\coker(J)_d\rar{(-)\cdot(-)}&\coker(J)_{k+d}
\end{tikzcd}\quad\text{ for }k<d-1
\end{equation}
with bottom horizontal map induced by the multiplication on the stable stems, using that products of elements in $\im(J)_k$ and $\pi_d\,\bfS$ contained in $\im(J)_{k+d}$ if $k<d-1$ (see p.\,442 of loc.cit.).

\begin{prop}[Kreck, Levine]\label{prop:Kreck-Levine}We have $T_\Sigma = \eta \cdot \Sigma$ where $\eta\in\pi_1\,\bfS\cong\bfZ/2$ is the generator.\end{prop}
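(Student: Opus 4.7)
The plan is to first show that $T\colon \Theta_d \to \Theta_{d+1}$, $\Sigma \mapsto T_\Sigma$, is a group homomorphism and then identify it with $\eta\cdot(-)$. For the homomorphism property, I would apply \cref{prop:ct-ctdsum-product} \ref{enum:ct-ctdsum} to $M = \Sigma_1$ and $N = \Sigma_2$ in $\Theta_d$, yielding
\[t_{\Sigma_1\sharp\Sigma_2} = (t_{\Sigma_1}\natural\id_{\Sigma_2^\circ}) + (\id_{\Sigma_1^\circ}\natural t_{\Sigma_2}) \in \pi_0\,\Diff_\partial(\Sigma_1^\circ\natural\Sigma_2^\circ).\]
Transporting along the natural isomorphism $\pi_0\,\Diff_\partial(\Xi^\circ)\cong\pi_0\,\Diff_\partial(D^d)\cong\Theta_{d+1}$ for $\Xi\in\Theta_d$ (induced by $\Xi^\circ\natural\overline{\Xi}^\circ\cong D^d$), which is compatible with the boundary connected sum operations in the formula, would give $T_{\Sigma_1\sharp\Sigma_2} = T_{\Sigma_1} + T_{\Sigma_2}$.

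Next, I would make $T_\Sigma$ explicit: unwinding the construction of the collar twist, $T_\Sigma\in\Theta_{d+1}$ is represented by the twisted double $D^{d+1}\cup_{\hat\phi}D^{d+1}$, where $\hat\phi\colon S^d\to S^d$ is the identity outside a neck $[0,1]\times S^{d-1}$ of the decomposition $S^d\supset D^d\cong\Sigma^\circ\natural\overline{\Sigma}^\circ$ and rotates this neck by a loop $\gamma\colon[0,1]\to\SO(d)$ representing the generator of $\pi_1\,\SO(d)=\bfZ/2$. Then I would match this with $\eta\cdot\Sigma$: since $\eta$ is the image under the $J$-homomorphism of the generator of $\pi_1\,\SO$, the Milnor--Munkres--Novikov pairing with $\eta$ is realised, by the constructions of Bredon and Kreck, by precisely this kind of twisted-double operation, so matching the two descriptions concludes $T_\Sigma=\eta\cdot\Sigma$.

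The main obstacle I expect is this last identification, which is the substance of the Kreck--Levine result. One approach is to adopt the collar-twist construction as the definition of the MMN pairing on the image of $J$ and verify its compatibility with the standard framed-cobordism definition via the diagram~\eqref{equ:compatility}. An alternative, leveraging the homomorphism property from the first step, is to use the exact sequence~\eqref{equ:KM-sequ} to reduce to a check on a generating set of $\Theta_d$: on $\bP_{d+1}$ via explicit Milnor plumbings and on lifts of $\coker(J)_d$-classes via framed surgery. As a consistency check, both sides vanish on the standard sphere (for $T$, because the fibration~\eqref{equ:collar-twist-sequence} splits via the standard $\SO(d)$-action on $S^d$ fixing a point; for the MMN side, trivially).
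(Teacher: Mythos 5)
Your opening observation is correct and worth noting: applying \cref{prop:ct-ctdsum-product}~\ref{enum:ct-ctdsum} to two homotopy spheres $\Sigma_1,\Sigma_2$ and transporting along the isomorphisms $\pi_0\,\Diff_\partial(\Xi^\circ)\cong\Theta_{d+1}$ does show that $\Sigma\mapsto T_\Sigma$ is a homomorphism $\Theta_d\to\Theta_{d+1}$ (one has to check compatibility of $(-)\natural\id_{\overline{\Xi}^\circ}$ with the formula, but this works by reassociating boundary connected sums). The consistency check $T_{S^d}=0$ via the section $\SO(d)\hookrightarrow\Diff^+_*(S^d)$ is also fine. The paper does not isolate the homomorphism property; it simply cites Kreck and Levine for the full identity.

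However, the crux of the proposition is exactly the step you flag as "the main obstacle," and neither of your two suggested routes actually closes it. Your route (a) --- taking the collar-twist construction to be the definition of pairing with $\eta$ and then comparing with the framed-cobordism definition via diagram~\eqref{equ:compatility} --- is circular as stated: the content of the claim is precisely that these two a priori different constructions agree, and \eqref{equ:compatility} is a property of the already-defined Milnor--Munkres--Novikov pairing, not a tool for comparing it with a competing definition. Your route (b) --- use the homomorphism property to reduce to generators, then compute on $\bP_{d+1}$ via plumbings and on lifts from $\coker(J)_d$ via framed surgery --- is a legitimate outline, but the two computations it calls for are exactly the content of Kreck's Lemma~3~c) and Levine's Corollary~4, and you do not carry them out. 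In particular, "matching the two descriptions" of a twisted double with $\eta\cdot\Sigma$ is not a formal diagram chase: it requires either Kreck's explicit analysis of the collar twist on even-dimensional spheres, or Levine's identification of his subgroup $I_1(\Sigma)$ with the collar-twist image together with his inertia-group computation. So the proposal restates the theorem more than it proves it.

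For comparison, the paper's proof simply delegates to the literature: for even $d$ it cites Kreck's Lemma~3~c) directly; for odd $d$ it observes that $\bP_{d+2}=0$, so the equality may be checked after passing to $\coker(J)_{d+1}$, where it follows from Levine's Corollary~4 plus the definition of $I_1(\Sigma)$. Your sketch, if completed along route (b), would essentially reprove those results; the homomorphism observation and the geometric "twisted double" picture would be useful scaffolding for such a reproof, but as it stands the identification $T_\Sigma=\eta\cdot\Sigma$ is asserted rather than established.
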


\begin{proof} Levine writes $\gamma(\Sigma)\in\Theta_{d+1}$ for $T_\Sigma\in\Theta_{d+1}$ \cite[p.\,245-246]{Levine} and Kreck writes $\Sigma_\Sigma\in\Theta_{d+1}$ for it \cite[p.\,646]{KreckIsotopy}. For even $d$, the claim is \cite[Lemma 3 c)]{KreckIsotopy}. For odd $d$, the subgroup $\mathrm{bP}_{d+2}\le \Theta_{d+1}$ is trivial, so it suffices to show the claimed equality after passing to $\coker(J)_{d+1}$ (see \cref{sec:classification}). The latter follows from \cite[Corollary 4]{Levine} using that Levine's subgroup $I_1(\Sigma) \subset \Theta_{d+1}$ is generated by $\gamma(\Sigma)\in\Theta_{d+1}$ by definition; see p.\,246 loc.cit..
\end{proof}

\begin{rem}\cref{prop:Kreck-Levine} has immediate consequences for collar twists of homotopy spheres. For example, since $\eta$ is $2$-torsion and the Milnor--Munkres--Novikov pairing is bilinear, the sphere $T_\Sigma=\eta\cdot \Sigma$ is trivial if $\Sigma\in\Theta_d$ has odd order, so the collar twist of $\Sigma$ is in these cases trivial too.
\end{rem}

The combination of \cref{prop:Kreck-Levine}, the classification of homotopy spheres as recalled in \cref{sec:classification}, and the diagram \eqref{equ:compatility} allows one to reduce most questions on collar twists of exotic spheres to questions in stable homotopy theory. As an example of this principle, we rephrase the condition featuring in the statements of Theorem \ref{bigthm:splitting} and \ref{bigthm:mcg-homotopy-tori} (whether $T_\Sigma=\eta \cdot \Sigma\in\Theta_{d+1}$ is divisible by $2$ or not) in most cases in terms of the cokernel of the stable $J$-homomorphism:

\begin{lem}\label{lem:reduct-to-coker} If $\eta \cdot \Sigma \in \Theta_{d+1}$ is divisible by $2$, then so is $ \eta \cdot [\Sigma] \in \coker(J)_{d+1}$. The converse holds
\begin{enumerate}
\item for $d \not \equiv 4,5 \pmod 8$,
\item for $d \equiv 5\ \ \ \,\pmod 8$ for $d\neq125$, and
\item for $d \equiv 4\ \ \ \,\pmod 8$ for $d=2^{k}-4$ with $k\le 6$.
\end{enumerate}
\end{lem}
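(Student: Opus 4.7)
The forward direction is immediate from the commutative diagram \eqref{equ:compatility}: if $\eta\cdot\Sigma = 2\omega$ in $\Theta_{d+1}$, applying $[-]\colon \Theta_{d+1} \to \coker(J)_{d+1}$ yields $\eta\cdot[\Sigma] = [\eta\cdot\Sigma] = 2[\omega]$. For the converse, assume $\eta\cdot[\Sigma] = 2y$ for some $y \in \coker(J)_{d+1}$ and aim to construct $\tilde\omega \in \Theta_{d+1}$ with $2\tilde\omega = \eta\cdot\Sigma$. The plan uses the exact sequence \eqref{equ:KM-sequ} in two steps: (i) lift $y$ to some $\tilde y \in \Theta_{d+1}$; (ii) show that $\eta\cdot\Sigma - 2\tilde y \in \bP_{d+2}$ lies in $2\Theta_{d+1}$.

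For step (i), the obstruction is the Kervaire cokernel $\coker(J)_{d+1}\to \bfZ/2$, nontrivial only when $d+1=2^k-2$ for $k\le 6$. In case (1), $d\not\equiv 5\pmod 8$ forces $d+1\not\equiv 6\pmod 8$ and rules this out. In case (2), the relevant dimensions are $d\in\{5,13,29,61\}$; since $2y$ has vanishing Kervaire invariant and the Kervaire class $\theta_k$ is $2$-torsion, replacing $y$ with $y+\theta_k$ when needed yields a lift while preserving $2y=\eta\cdot[\Sigma]$. In case (3), $d+1=2^k-3$ is not of the form $2^j-2$, so again no obstruction arises.

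For step (ii), the classification in \cref{section:ct-homotopy-spheres} gives $\bP_{d+2}=0$ whenever $d$ is odd (covering $d\equiv 1,3,7\pmod 8$ in case (1) and all of case (2)) and whenever $d+1=2^k-3$ lies in the exceptional $\bP$-range (all of case (3)), so the conclusion is automatic. For $d\equiv 2,6\pmod 8$ in case (1), $\bP_{d+2}$ is cyclic of order divisible by $4$, so $\bP_{d+2}[2]\subseteq 2\bP_{d+2}$; using the Brumfield--Frank splitting $\Theta_{d+1}\cong \bP_{d+2}\oplus \coker(J)_{d+1}$ of \cite{BrumfieldI,Frank}, write $\eta\cdot\Sigma=\alpha+\beta$. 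Since $\eta\cdot\Sigma$ is $2$-torsion (as $2\eta=0$), both summands are $2$-torsion, so $\alpha\in \bP_{d+2}[2]\subseteq 2\bP_{d+2}\subseteq 2\Theta_{d+1}$, while $\beta\in \coker(J)_{d+1}$ is $2$-divisible by hypothesis, giving $\eta\cdot\Sigma\in 2\Theta_{d+1}$.

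The main obstacle is the subcase $d\equiv 0\pmod 8$ of case (1), where $\bP_{d+2}=\bfZ/2$ and the Brumfield splitting $\Theta_{d+1}\cong \bfZ/2\oplus \coker(J)_{d+1}$ from \cite{BrumfieldII,BrumfieldIII} does not embed $\bP_{d+2}$ inside $2\Theta_{d+1}$, since the generator of $\bfZ/2$ is not $2$-divisible. The argument above reduces the problem to showing that the $\bfZ/2$-component $\alpha$ of $\eta\cdot\Sigma$ vanishes under the hypothesis. I expect to resolve this by a $2$-local analysis of $\coker(J)_{d+1}$: either the hypothesis $\eta\cdot[\Sigma]\in 2\coker(J)_{d+1}$ forces $\eta\cdot[\Sigma]=0$ outright (so $\eta\cdot\Sigma\in \bP_{d+2}$), in which case one rules out $\eta\cdot\Sigma$ equalling the Kervaire sphere via a Kervaire-invariant computation on an $\eta$-twisted parallelizable bounding manifold of $\eta\cdot\Sigma$; or one exploits the structure of the Milnor--Munkres--Novikov pairing and the commutativity of \eqref{equ:compatility} to locate $\eta\cdot\Sigma$ in the complement of $\bP_{d+2}$ for an appropriate choice of splitting.
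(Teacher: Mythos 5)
Your forward direction and your handling of the cases $d \not\equiv 0 \pmod 8$ are essentially correct and largely parallel the paper's argument (which also partitions by $d+1 \pmod 8$ and exploits the Brumfiel/Frank splittings of $\Theta_{d+1}$ together with the fact that $\eta\cdot\Sigma$ is $2$-torsion). Your "lift then adjust by $\bP_{d+2}$" organization is cosmetically different from the paper's direct case analysis, and your Kervaire-obstruction bookkeeping in step~(i), including the $y \mapsto y + \theta_k$ trick in case~(2), is fine. You also correctly observe that $\bP_{d+2} = 0$ when $d$ is odd and when $d+1 = 2^k-3$ with $k \le 6$, covering cases~(2) and~(3).

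The genuine gap is exactly where you flag it: the subcase $d \equiv 0 \pmod 8$ (so $d+1 \equiv 1 \pmod 8$, $\bP_{d+2} \cong \bfZ/2$). There you need to show that the $\bP_{d+2}$-component of $\eta\cdot\Sigma$ under the Brumfiel splitting $\Theta_{d+1} \cong \bP_{d+2} \oplus \coker(J)_{d+1}$ vanishes, and neither of your proposed routes delivers this. The speculation that the hypothesis "forces $\eta\cdot[\Sigma] = 0$ outright" is unsupported ($2\coker(J)_{d+1}$ need not be trivial), and "choose a different splitting" does not help since the $\bP_{d+2}$-component is well-defined once a splitting is fixed and you must control it for the given one. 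What the paper actually uses is a specific geometric input from Lawson: $\eta\cdot\Sigma$ lies in the subgroup $\bSpin_{d+2} \le \Theta_{d+1}$ of spheres bounding spin manifolds, and on this subgroup the $\bP_{d+2}$-component with respect to Brumfiel's splitting is computed by the $f$-invariant of Brumfiel, which Lawson (Proposition~4.1, using the compatibility of the pairings $\tau_{n,k}$ and $\rho_{n,k}$) shows vanishes on elements of the form $\eta\cdot\Sigma$. Without this input your argument does not close, so the proposal is incomplete in precisely the case that required the real work.
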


\begin{proof}
By commutativity of \eqref{equ:compatility}, the class $\eta \cdot [\Sigma]\in\coker(J)_{d+1}$ is the image of $\eta \cdot \Sigma \in \Theta_{d+1}$ under the morphism $[-]\colon \Theta_{d+1}\ra \coker(J)_d$, so if the latter is divisible by $2$, then so is the former. To prove the partial converse, we distinguish some cases and make frequent use of the classification of homotopy spheres as recalled in \cref{sec:classification}, without further reference.

\begin{itemize}[leftmargin=0.5cm]
\item For $d+1\equiv3,7\ \mathrm{mod}\ 8$, the map $[-]\colon\Theta_{d+1}\ra \coker(J)_{d+1}$ is split surjective, so $\Theta_{d+1}\cong \bP_{d+2}\oplus \coker(J)_{d+1}$. Since $\eta \cdot \Sigma$ has order two and $\bP_{d+2}$ is cyclic of order divisible by $4$, the $\bP_{d+2}$-component of the order $2$ element $\eta \cdot \Sigma$ has to be divisible by $2$, so the full element $\eta \cdot \Sigma$ is divisible by $2$ if and only if its image $\eta\cdot[\Sigma]\in \coker(J)_{d+1}$ is divisible by $2$.
\item For $d+1\equiv1\ \mathrm{mod}\ 8$, the map $[-]\colon\Theta_{d+1}\ra \coker(J)_{d+1}$ is also split surjective, so $\Theta_{d+1}\cong \bP_{d+2}\oplus \coker(J)_{d+1}$ . In this case the $\bP_{d+2}$-component of $\eta \cdot \Sigma\in\Theta_{d+1}$ turns out to vanish, which implies the result. The reason for this vanishing is that $\eta \cdot \Sigma\in\Theta_{d+1}$ is contained in the subgroup $\bSpin_{d+2}\le \Theta_{d+1}$ of homotopy spheres that bound a spin manifold \cite[\S 4 + Diagram (6)]{Lawson} and on this subgroup the $\bP_{d+2}$-component with respect to the can be computed as the image of the $f$-invariant from \cite[§3]{BrumfielII} which vanishes for $\eta \cdot \Sigma$ by \cite[Proposition 4.1]{Lawson} (this uses that the pairings denoted $\tau_{n,k}$ and $\rho_{n,k}$ in loc.cit.\, are compatible, by diagram (B) on p.~835 of loc.cit.).
\item For $d+1\equiv0,2,4,6\ \mathrm{mod}\ 8$ and $d+1\neq2^k-2$ for $k\le 7$, and for $d+1 \equiv 5 \pmod 8$ with $d+1=2^{k}-3$ for $k\le 6$ we have  $\Theta_{d+1}\cong \coker(J)_{d+1}$ and there is nothing to show.
\item For $d+1 \equiv 6 \pmod 8$ with $d+1=2^{k}-2$ for $k\le 6$ we have $\Theta_{d+1}\cong \coker(J)_{d+1}\oplus \bfZ/2$, so an element in $\Theta_{d+1}$ is divisible by $2$ if and only if this holds for its image in $\coker(J)_{d+1}$. \qedhere
\end{itemize}
\end{proof}

\begin{rem}To extend \cref{lem:reduct-to-coker} to $d+1 \equiv 5 \pmod 8$ for $d+1 \neq 2^k-3$, it would suffice to show that the $\bP_{d+2}$-component of $\eta \cdot \Sigma$ for $\Sigma\in \Theta_d$ under the splitting $\Theta_{d+1} \cong \coker(J)_{d+1} \oplus \bP_{d+2}$ recalled in \cref{sec:classification} is trivial. We do not know whether this is the case.
\end{rem}

In view of \cref{lem:reduct-to-coker}, the question whether $\eta\cdot\Sigma\in\Theta_{d+1}$ is divisible by $2$ can in many dimensions be analysed with inputs from stable homotopy theory. The following two remarks contain some applications in this direction:

\begin{rem}\label{ex:table} As $\eta$ has order two, whether $\eta\cdot\Sigma\in\Theta_{d+1}$ is divisible by $2$ or not can be tested $2$-locally. At the prime $2$, the groups $\coker(J)_d$ and multiplication by $\eta$ on them have been computed up to dimensions about $90$. The result is summarised in \cite[Figure 1]{IsaksenWangXu} where every dot represents a nontrivial element, the diagonal and vertical lines indicate that two elements are related by multiplication with $\eta$ or $2$, respectively, and the image of $J$ consists of the blue dots, apart from the blue dots in degrees $\equiv1,2 \pmod 8$. Combining this with \cref{lem:reduct-to-coker} and the classification of homotopy spheres recalled in \cref{sec:classification}, one can in most dimensions up to about $90$ determine the groups $\Theta_{d}$ and the subgroups $\smash{\Theta^{\mathrm{split}}_{d}\le \Theta_{d}}$ of those $\Sigma\in\Theta_d$ such that $\eta\cdot \Sigma$ is divisible by $2$. The result of this analysis for $d\le 19$ is recorded in \cref{tab:split} of the introduction.
\end{rem}

\begin{rem}There are also many infinite families of homotopy spheres $\Sigma\in\Theta_d$ for which one can decide whether $\eta\cdot\Sigma\in\Theta_{d+1}$ is divisible by $2$ or not. We again rely on \cref{sec:classification}.
\begin{itemize}[leftmargin=0.5cm]
\item As an infinite family of nontrivial $\Sigma\in\Theta_d$ in odd dimensions such that $\eta\cdot\Sigma$ is divisible by $2$, one may for instance take any $\Sigma\in\Theta_{d}$ for $d\equiv 1,3,7\pmod 8$ that lies in the nontrivial subgroup $\bP_{d+1}\le \Theta_{d}$. This is because $\eta\cdot \Sigma\in\Theta_{d+1}\cong \coker(J)_{d+1}$ is trivial as a result of \eqref{equ:compatility}, so it is in particular divisible by $2$. There are also examples in even dimensions: as $\bP_{8k+3}=0$, the class in $\coker(J)_{8k+2}$ of Adams' element $\mu_{8k+2}\in\pi_{8k+2}\,\bfS$ (which is nontrivial in $\coker(J)_{8k+2}$ as $\pi_{8k+2}\,\oO=0$) lifts uniquely to a homotopy sphere $\Sigma_{\mu_{8k+2}}\in\Theta_{8k+2}$. As $\eta\cdot [\mu_{8k+2}]=0\in \coker(J)_{8k+2}$ since $\eta\cdot \mu_{8k+2}\in\pi_{8k+3}\,\bfS$ is known to be contained in $\im(J)_{8k+3}$, it follows from \cref{lem:reduct-to-coker} that $\Sigma_{\mu_{8k+2}}$ is divisible by $2$.
\item As an infinite family of nontrivial $\Sigma\in\Theta_d$ in odd dimensions such that $\eta\cdot\Sigma$ is not divisible by $2$, one may take any $\Sigma\in\Theta_{8k+1}$ that maps to the class in $\coker(J)_{8k+1}$ represented by Adams' element  $\mu_{8k+1}\in\pi_{8k+1}\,\bfS$ which is known to have the property that $\eta\cdot\mu_{8k+1}\in \coker(J)_{8k+2}=\pi_{8k+2}\,\bfS$ is not divisible by $2$. In even dimensions, one may use the families of nontrivial homotopy spheres $\Sigma\in \Theta_{d}$ for $d \equiv 8 \pmod {192}$ from \cite[Proposition 2.11 (i)]{KrannichExotic} which have the property that $\eta\cdot[\Sigma]\in\coker(J)_{d}$ is nontrivial and detected in the spectrum $\mr{tmf}$ of topological modular forms (see the proof of the cited proposition). Moreover, in these dimensions $\pi_{d+1}\,\mr{tmf}$ is known to be annihilated by $2$ (see e.g.\,\cite[Figure 1.2]{Behrens}), so $\eta\cdot[\Sigma]$ is not divisible by $2$ in $\pi_{d+1}\,\mr{tmf}$ and hence neither in $\coker(J)_{d+1}$.
\end{itemize}
\end{rem}

\subsection{Collar twists of tori} The next class of manifolds for which we establish some results on their collar twisting maps are homotopy tori. For this class of manifolds, it is convenient to study the fibre sequence \eqref{equ:collar-twist-sequence} involving the collar twisting maps by comparing it to an analogous sequence for block-homeomorphisms (see e.g.\,\cite[Section 2]{HLLRW} for a discussion of block-automorphisms suitable for our needs) via a map of fibre sequences
\begin{equation}\label{diff-to-block}
\begin{tikzcd}
\BDiff_\partial(M^\circ)\rar\dar& \BDiff^+_\ast(M)\rar\dar& \BSO(d)\dar\\
\BlockBHomeo_{T_*M}(M)\rar&\BlockBHomeo^+_\ast(M)\rar&\BSTOP.
\end{tikzcd}
\end{equation}
The bottom row of this diagram deserves an explanation. To construct it, first consider the forgetful map $\BlockHomeo^+(M)\ra \hAut^+(M)$ from the space of orientation-preserving block-homeomorphisms of $M$ to the space of orientation-preserving homotopy self-equivalences. The space $\BlockHomeo^+_\ast(M)$ is defined as the homotopy pullback of this map along the inclusion map $\hAut_\ast^+(M)\ra \hAut^+(M)$ of those orientation-preserving self-equivalences of $M$ that preserve the chosen point $\ast\in M$. The delooping of the latter map is the universal $M$-fibration, so $\BlockBHomeo^+_\ast(M)$ is by construction equivalent to the total space of the universal oriented $M$-block-bundle. The right-hand map in the bottom sequence is the delooping of the map $\BlockHomeo^+_\ast(M)\rightarrow \STOP$ that takes the stable topological derivative of a block-homeomorphism of $M$ at $\ast\in M$, or equivalently, it classifies the stable vertical topological tangent bundle of the universal oriented $M$-block-bundle (see Section 2 loc.cit.), similarly to how the right-hand map of the upper sequence classifies the vertical tangent bundle of the universal oriented smooth $M$-bundle. The rightmost vertical map classifies the underlying stable Euclidean bundle of an oriented $d$-dimensional vector bundle and the middle vertical map is induced by the forgetful map $\Diff_\ast^+(M)\ra  \BlockHomeo^+_\ast(M)$. The left-hand map of the bottom sequence is \emph{defined} as the homotopy fibre inclusion of the right-hand map, or equivalently, as the delooping of the derivative map.

Note that the bottom row only depends on the underlying topological manifold of $M$, so in particular agrees for homotopy tori $M=\cT$ with the corresponding sequence of the standard torus $M=T^d$. For the latter, the middle space $\BlockBHomeo^+_\ast(M)$ has a very simple description:
\begin{lem}\label{lem:pointed-block-homeo}
For any $d\ge1$, the map \[\BlockBHomeo^+_\ast(T^d)\lra\BSL_d(\bfZ)\] induced by the action on $\oH_1(T^d)\cong\bfZ^d$ is an equivalence.
\end{lem}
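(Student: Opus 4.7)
The plan is to exploit the defining homotopy pullback square
\[
\begin{tikzcd}
\BlockHomeo^+_\ast(T^d) \rar \dar & \hAut^+_\ast(T^d) \dar \\
\BlockHomeo^+(T^d) \rar & \hAut^+(T^d)
\end{tikzcd}
\]
to reduce the lemma to two ingredients: an identification $\hAut^+_\ast(T^d) \simeq \SL_d(\bfZ)$, together with the topological block-rigidity of $T^d$, which makes the bottom horizontal map a weak equivalence. Granting both, the top horizontal map is also a weak equivalence, and delooping yields $\BlockBHomeo^+_\ast(T^d)\simeq \BSL_d(\bfZ)$.

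For the first ingredient, I would use that $T^d$ is the Eilenberg--MacLane space $K(\bfZ^d, 1)$, so that the space of pointed self-maps is homotopically discrete with $\pi_0=\Hom(\bfZ^d,\bfZ^d)$; restricting to homotopy equivalences yields $\Aut(\bfZ^d)=\GL_d(\bfZ)$, and the orientation-preserving component is $\SL_d(\bfZ)$. Via the Hurewicz isomorphism $\pi_1(T^d)\cong \oH_1(T^d)$ this identification is precisely the action on $\oH_1$, which supplies the compatibility with the map in the statement.

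The substantive ingredient is the topological block-rigidity of the torus, namely the contractibility of the topological block structure space of $T^d$, or equivalently the assertion that $\BlockHomeo^+(T^d)\to \hAut^+(T^d)$ is a weak equivalence. In high dimensions $d\ge 5$ this combines Wall's surgery exact sequence with the computation of $L_\ast(\bfZ[\bfZ^d])$ due to Shaneson and Hsiang--Wall (equivalently, the assertion that the topological $L$-theory assembly map for $\bfZ^d$ is an equivalence); in dimension $d=4$ one invokes Freedman--Quinn, in dimension $d=3$ Waldhausen's work together with the Poincar\'e conjecture, and for $d\le 2$ the statement is classical. This rigidity input is the one genuine obstacle; the remainder of the argument is formal, relying only on the asphericity of $T^d$ and on the fact that $\SL_d(\bfZ)\subset\GL_d(\bfZ)$ picks out the orientation-preserving part of $\Aut(\pi_1 T^d)$.
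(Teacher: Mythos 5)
Your proof is correct and takes essentially the same approach as the paper: the paper establishes that $\BlockBHomeo^+_\ast(T^d) \to \BhAut^+_\ast(T^d)$ is an equivalence by chasing the ladder of fibre sequences $T^d \to \BlockBHomeo^+_\ast(T^d) \to \BlockBHomeo^+(T^d)$ versus $T^d \to \BhAut^+_\ast(T^d) \to \BhAut^+(T^d)$, obtaining surjectivity on homotopy groups from the explicit $\SL_d(\bfZ)$- and $T^d$-actions and injectivity from the vanishing of the structure sets $\cS^{\TOP}_\partial(T^d\times D^k)$, whereas you reduce instead via the defining homotopy pullback square, but the two arguments are formally equivalent and hinge on the same two ingredients (asphericity of $T^d$ and its topological block-rigidity). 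One small caveat: for $d\le4$ the paper cites a dimension-independent direct argument of Lawson, since the low-dimensional references you list give the $\pi_0$-level Borel statement for $T^d$ but do not by themselves yield the contractibility of the whole block structure space, which also involves the structure sets of $T^d\times D^k$ relative to the boundary for $k\ge1$.
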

\begin{proof}
As $T^d\simeq K(\bfZ^d,1)$, the analogous map $\hAut^+_\ast(T^d)\ra\SL_d(\bfZ)$ from the space of orientation homotopy self-equivalences of $M$ is an equivalence, so it suffices to show that the forgetful map $\BlockHomeo^+_\ast(T^d)\ra \hAut^+_\ast(T^d)$ is an equivalence. We will do so by proving that the right-hand map in the map of homotopy fibre sequences
\[
\begin{tikzcd}
T^d\rar\arrow[d,equal] &\BlockBHomeo^+_\ast(T^d)\rar\dar&\BlockBHomeo^+(T^d) \arrow[d]\\
 T^d\rar &\BhAut^+_\ast(T^d)\rar&\BhAut^+(T^d)
\end{tikzcd}
\]
comparing the universal $T^d$-block-bundle with the universal $T^d$-fibration, is an equivalence.
Using the action of $\SL_d(\bfZ)$ on $T^d\cong\bfR^d/\bfZ^d$ and the action of $T^d$ on itself, a diagram chase in the ladder of long exact sequences induced by this map of fibre sequences shows that the middle arrow is surjective on all homotopy groups. Injectivity on homotopy groups is equivalent to the claim that for $k\ge0$, any self-homeomorphism of $T^d\times D^k$ fixing on the boundary that is homotopic to the identity relative to the boundary is also concordant to the identity relative to the boundary. For $d\ge5$, this from the fact that the topological structure sets $\smash{\cS^{\TOP}_\partial(T^d\times D^k)}$ in the sense of surgery theory are trivial as long as $k+d\ge5$ \cite[p.\,205, Theorem C.2]{KirbySiebenmann}, but there is also a more direct proof in all dimensions  \cite{LawsonHomeomorphisms}.
\end{proof}

\begin{cor}\label{cor:collar-twist-injectivity}Let $\cT$ be a homotopy torus of dimension $d\ge1$. The collar twisting map
\[\Upsilon_{\cT} \colon \SO(d)\lra \BDiff_\partial(\cT^{\circ})\]
is injective on $\pi_k(-)$ for $k\le d-2$. In particular, $t_{\cT}\in\pi_0\,\Diff_\partial(\cT^\circ)$ is nontrivial for $d\ge3$.
\end{cor}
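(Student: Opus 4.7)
The plan is to compare $\Upsilon_\cT$ with its topological block-homeomorphism analog via the commutative diagram \eqref{diff-to-block}. Since every homotopy $d$-torus $\cT$ is homeomorphic to $T^d$, the bottom row of that diagram for $\cT$ agrees with the one for the standard torus, so \cref{lem:pointed-block-homeo} identifies the pointed base as $\BlockBHomeo^+_\ast(\cT)\simeq\BSL_d(\bfZ)\simeq K(\SL_d(\bfZ),1)$.

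First, I would analyse the bottom fibre sequence
\[
\BlockBHomeo_{T_\ast\cT}(\cT)\lra\BlockBHomeo^+_\ast(\cT)\lra\BSTOP
\]
by its long exact sequence in homotopy. Since $\pi_{k+1}\BSL_d(\bfZ)=0$ for all $k\ge 1$, this yields that the connecting map
\[
\STOP\simeq\Omega\BSTOP\lra\BlockBHomeo_{T_\ast\cT}(\cT)
\]
is injective on $\pi_k$ for every $k\ge 1$.

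Next, by commutativity of \eqref{diff-to-block}, the composition $\SO(d)\xrightarrow{\Upsilon_\cT}\BDiff_\partial(\cT^\circ)\to\BlockBHomeo_{T_\ast\cT}(\cT)$ agrees with $\SO(d)\to\STOP\to\BlockBHomeo_{T_\ast\cT}(\cT)$, so in view of the first step it suffices to show that $\SO(d)\to\STOP$ is injective on $\pi_k$ for $k\le d-2$. I would factor this as $\SO(d)\to\SO\to\STOP$. Iterating the fibre sequences $\SO(d)\to\SO(d+1)\to S^d$ gives $\pi_k\SO(d)\xrightarrow{\cong}\pi_k\SO$ in the range $k\le d-2$ by connectivity, and the injectivity of $\pi_k\SO\to\pi_k\STOP$ for all $k$ is a classical consequence of smoothing theory (Kirby--Siebenmann): in the long exact sequence of $\SO\to\STOP\to\STOP/\SO$, the only possibly nontrivial boundary (occurring for $k\ge 4$) is the stable tangent bundle map $\Theta_{k+1}\cong\pi_{k+1}(\STOP/\SO)\to\pi_k\SO$, and this vanishes because every homotopy sphere is stably parallelisable by Kervaire--Milnor.

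Putting the steps together gives injectivity of $\Upsilon_\cT$ on $\pi_k$ for $1\le k\le d-2$, and the case $k=0$ is automatic since $\SO(d)$ is connected. The ``in particular'' assertion follows from the case $k=1$: for $d\ge 3$ the group $\pi_1\SO(d)=\bfZ/2$ lies in the injectivity range, hence $t_\cT=(\Upsilon_\cT)_\ast(1)$ is nontrivial. The main technical obstacle will be the smoothing-theoretic input in the second step, which requires both identifying the boundary map in the long exact sequence with the stable tangent bundle map and invoking Kervaire--Milnor to see it is zero.
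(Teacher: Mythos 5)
Your proof takes essentially the same route as the paper: compare the smooth and block-homeomorphism fibre sequences via \eqref{diff-to-block}, use \cref{lem:pointed-block-homeo} to see the pointed block base has vanishing higher homotopy, and thereby reduce to injectivity of $\pi_k\,\SO(d)\to\pi_k\,\STOP$, which is handled by stability plus injectivity of $\pi_k\,\SO\to\pi_k\,\STOP$. The only difference is at this last point, where the paper cites Brumfiel and Kirby--Siebenmann whereas you give a self-contained argument via the smoothing-theory identification $\pi_{n}(\STOP/\SO)\cong\Theta_{n}$ for $n\ge 5$ and the stable parallelisability of homotopy spheres; that argument is correct.
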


\begin{proof}
From the map of long exact sequences induced by the map \eqref{diff-to-block} together with the fact that the higher homotopy groups of $\BlockBHomeo^+_\ast(\cT)\simeq \BlockBHomeo^+_\ast(T^d)$ vanish as a result of \cref{lem:pointed-block-homeo}, we see that the map in question is injective on $\pi_k(-)$ if the map $\pi_k\,\SO(d)\ra \pi_k\,\TOP$ is injective. This maps factors as the stabilisation map $\SO(d)\ra \SO$ followed by the forgetful map $\SO\ra \TOP$. The latter is injective on all homotopy groups (combine \cite{BrumfielI} with \cite[p.\,246, 5.0.(1)]{KirbySiebenmann}) and the former for $k\le d-2$ by stability, so the claim follows.
\end{proof}

\begin{rem}The collar twist $t_{\cT}\in\pi_0\,\Diff_\partial(\cT^\circ)$ is also nontrivial for $d=2$ which follows for instance from \cref{lem:mcg-torus-bdy} below. Moreover, for $d=2,3$ the map $\Upsilon_\cT$ induces an isomorphism on all higher homotopy groups since the component of the identity $\Diff_*(T^d)_\id \simeq \ast$ is contractible. This is well-known for $d=2$ and follows for $d=3$ from a combination of  \cite{Hatcher} and \cite{HatcherIrreducible} using that $T^3$ is Haken.\end{rem}

\begin{rem}\label{rem:general-aspherical} Replacing $\SL_d(\bfZ)$ with $\Aut(\pi_1\,M)$, the statement of \cref{lem:pointed-block-homeo} (and thus also that of \cref{cor:collar-twist-injectivity}) holds for many other closed aspherical manifolds $M$, in particular for those of dimension $d\ge5$ whose fundamental group satisfies the Farrell--Jones conjecture and also for those of dimension $d=4$ if the fundamental group is  \emph{good} in the sense of \cite[p.~99]{FreedmanQuinn} (see e.g.\,\cite[Proposition 5.1.1]{HLLRW} for an explanation of this).
\end{rem}

\section{Mapping class groups of exotic tori and the proof of \cref{bigthm:mcg-homotopy-tori}}
Equipped with the results on collar twists from the previous section, we turn towards studying the mapping class groups of homotopy tori of the form $T^d \sharp \Sigma$.

\subsection{Central extensions of special linear groups}\label{sec:extensions} The strategy will be to relate the mapping class groups of homotopy tori to well-known central extensions of special linear groups. We first recall these extensions and discuss some of their properties. The universal cover of the stable special linear group over the reals $\SL(\bfR) = \colim_d\, \SL_d(\bfR)$ gives a central extension
\[0\lra \bfZ/2\lra \overline{\SL}(\bfR)\lra \SL(\bfR)\lra 0\]
which we may pull back along the lattice inclusion $\SL_d(\bfZ)\le \SL(\bfR)$ to a central extension
\begin{equation}\label{equ:sl-bar}0\lra \bfZ/2\lra \overline{\SL}_d(\bfZ)\lra \SL_d(\bfZ)\lra 0\end{equation} for $d\ge1$. For $d=2$, also a different central extension will play a role, namely the pullback
\begin{equation}\label{equ:sl-tilde}0\lra \bfZ\lra \widetilde{\SL}_2(\bfZ)\lra \SL_2(\bfZ)\lra 0\end{equation}
along the inclusion $\SL_2(\bfZ)\le \SL_2(\bfR)$ of the universal cover central extension
\[0\lra \bfZ\lra \widetilde{\SL}_2(\bfR)\lra \SL_2(\bfR)\lra 0.\]
Since the inclusion map $ \SL_d(\bfR)\ra  \SL_{d+1}(\bfR)$ is surjective on fundamental groups for $d\ge2$, the extension \eqref{equ:sl-bar} agrees with the pushout of \eqref{equ:sl-tilde} along the quotient map $\bfZ\ra\bfZ/2$. Everything we need to know about these extensions, together with some useful information on the low-degree (co)homology of $\SL_d(\bfZ)$ is summarised in the following lemma.

\begin{lem}\label{lem:low-degree-homology}\ \begin{enumerate}
		\item\label{sl-i} The first two homology groups of $\SL_d(\bfZ)$ are given by the following table
		\[\begin{tabular}{l|l|l}
			\toprule
			$d$ & $\oH_1(\SL_d(\bfZ);\bfZ)$ & $\oH_2(\SL_d(\bfZ);\bfZ)$\\
			\midrule
			$2$ & $\bfZ/12$ & $0$\\
			$3$& $0$ & $\bfZ/2\oplus \bfZ/2$  \\
			$4$& $0$ & $\bfZ/2\oplus \bfZ/2$ \\
			$\ge5$ & $0$ & $\bfZ/2$ \\
			\bottomrule
		\end{tabular}\]
		\item\label{sl-ii} The map \[\oH_2(\SL_d(\bfZ);\bfZ)\lra \oH_2(\SL_{d+1}(\bfZ);\bfZ)\] induced by stabilisation is nontrivial for $d\ge3$. For $d=3$ its image has order two.
			\item \label{sl-iii}The extension \eqref{equ:sl-tilde} is classified by a generator of \[\oH^2(\SL_2(\bfZ);\bfZ) \cong \bfZ/12.\]
		\item\label{sl-iv} The extension \eqref{equ:sl-bar} is nontrivial for $d\ge2$. For $d\ge5$, it is classified by the generator of \[\oH^2(\SL_d(\bfZ);\bfZ/2) \cong \bfZ/2.\]
		\end{enumerate}
\end{lem}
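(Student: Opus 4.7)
The plan is to treat the four parts essentially in order, using classical (co)homological inputs for $\SL_d(\bfZ)$ together with restriction to well-chosen subgroups.

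For \ref{sl-i}, I would compute the abelianisation as follows. When $d=2$ the amalgamated product presentation $\SL_2(\bfZ) \cong \bfZ/4 \ast_{\bfZ/2} \bfZ/6$ abelianises directly to $\bfZ/12$; when $d \ge 3$ the Steinberg commutator relations $[E_{ij}, E_{jk}] = E_{ik}$ (for $i,j,k$ distinct) show that $\SL_d(\bfZ)$ is perfect, so $H_1 = 0$. For $H_2$ in the stable range $d \ge 5$ I would combine van der Kallen's homological stability theorem with Milnor's computation $K_2(\bfZ) \cong \bfZ/2$. The values for $d = 3, 4$ are genuinely unstable; I would cite the explicit computations from the literature rather than reprove them.

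For \ref{sl-ii}, in the range $d \ge 5$ stabilisation is an isomorphism, hence nontrivial. For $d = 3, 4$, the nontriviality of the image will follow from the observation that the class of the extension \eqref{equ:sl-bar} for $\SL_{d+1}(\bfZ)$ restricts to the class of \eqref{equ:sl-bar} for $\SL_d(\bfZ)$ (which is nontrivial by part \ref{sl-iv}, proved below); dually, the class in $H_2(\SL_d(\bfZ);\bfZ)$ detecting this restriction then maps nontrivially into $H_2(\SL_{d+1}(\bfZ);\bfZ)$. The quantitative statement for $d=3$, namely that the image has order exactly $2$, would then require identifying the stabilisation map on the unstable group $H_2(\SL_3(\bfZ);\bfZ) \cong (\bfZ/2)^{\oplus 2}$, which I would extract from the same literature used in \ref{sl-i}.

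For \ref{sl-iii}, universal coefficients together with \ref{sl-i} give $H^2(\SL_2(\bfZ);\bfZ) \cong \mathrm{Ext}(\bfZ/12, \bfZ) \cong \bfZ/12$. To identify the class of \eqref{equ:sl-tilde} as a generator I would restrict along the two finite cyclic torsion subgroups of $\SL_2(\bfZ)$ of orders $4$ and $6$. Any such subgroup is conjugate inside $\SL_2(\bfR)$ into the maximal compact $\SO(2) \cong \bfR/\bfZ$, and the pullback of the universal cover extension $\bfZ \to \bfR \to \bfR/\bfZ$ along $C_n \hookrightarrow \bfR/\bfZ$ is the multiplication-by-$n$ extension $\bfZ \xrightarrow{n} \bfZ \to \bfZ/n$, which represents a generator of $H^2(C_n;\bfZ) \cong \bfZ/n$. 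Mayer--Vietoris for the amalgam yields an injection $H^2(\SL_2(\bfZ);\bfZ) \hookrightarrow \bfZ/4 \oplus \bfZ/6$, and the restricted class lands on $(1,1)$, which has order $\mathrm{lcm}(4,6) = 12$; hence the class is of maximal order.

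For \ref{sl-iv}, when $d \ge 5$ the rank computation follows from universal coefficients and \ref{sl-i}, giving $H^2(\SL_d(\bfZ);\bfZ/2) \cong \Hom(\bfZ/2, \bfZ/2) \cong \bfZ/2$, so only nontriviality is at issue. For any $d \ge 2$, the extension \eqref{equ:sl-bar} restricts along the standard inclusion $\SL_2(\bfZ) \hookrightarrow \SL_d(\bfZ)$ to the pushout of \eqref{equ:sl-tilde} along $\bfZ \twoheadrightarrow \bfZ/2$; by \ref{sl-iii} this is classified by the image of a generator of $\bfZ/12 = H^2(\SL_2(\bfZ);\bfZ)$ under the coefficient map $H^2(-;\bfZ) \to H^2(-;\bfZ/2)$, which by universal coefficients is the surjection $\bfZ/12 \twoheadrightarrow \bfZ/2$ and thus sends the generator to the nontrivial element. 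The main obstacle, in my view, is assembling the correct unstable inputs for \ref{sl-i} and \ref{sl-ii}; once these are in hand, the extension-class identifications in \ref{sl-iii} and \ref{sl-iv} follow cleanly from restriction to torsion subgroups together with the universal coefficient theorem.
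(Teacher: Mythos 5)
Your argument is correct, and parts \ref{sl-i}, \ref{sl-ii}, and \ref{sl-iv} run essentially parallel to the paper's proof: \ref{sl-i} is literature citation (van der Kallen for $d=3,4$; Milnor and stability for $d \geq 5$), \ref{sl-iv} follows from \ref{sl-iii} by reduction mod $2$ and restriction along $\SL_2(\bfZ) \hookrightarrow \SL_d(\bfZ)$, and \ref{sl-ii} for $d\geq3$ is then dualised from \ref{sl-iv} via the universal coefficient theorem, with the order-$2$ statement at $d=3$ again deferred to van der Kallen, as in the paper.

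Where you genuinely diverge is part \ref{sl-iii}. The paper's route is algebraic: for a central extension $0 \to A \to E \to G \to 0$ the LHS spectral sequence yields an exact sequence $\oH^1(E;A) \to \oH^1(A;A) \to \oH^2(G;A) \to \oH^2(E;A)$ with the extension class as the image of $\id_A$, so the paper reduces to showing $\oH^2(\widetilde{\SL}_2(\bfZ);\bfZ) = 0$ and deduces this from the identification $\widetilde{\SL}_2(\bfZ) \cong B_3$ together with Arnold's computation of braid group homology. You instead use the geometric fact that the finite cyclic subgroups $C_4, C_6 \leq \SL_2(\bfZ)$ are conjugate in $\SL_2(\bfR)$ into $\SO(2)$, where the universal-cover extension visibly pulls back to $\bfZ \xrightarrow{\cdot n} \bfZ \to \bfZ/n$, a generator of $\oH^2(C_n;\bfZ)$; since the class of \eqref{equ:sl-tilde} restricts to generators of both $\bfZ/4$ and $\bfZ/6$, its order is divisible by $\mathrm{lcm}(4,6)=12$, forcing it to generate $\oH^2(\SL_2(\bfZ);\bfZ)\cong\bfZ/12$. (The Mayer--Vietoris injectivity you invoke is not actually needed once the order of $\oH^2$ is known from \ref{sl-i}, but it is harmless.) Your route avoids needing to know anything about braid groups and is arguably more transparent, at the cost of invoking the geometry of the maximal compact; the paper's route is purely group-cohomological but leans on an extra identification of $\widetilde{\SL}_2(\bfZ)$.

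Two small matters worth flagging, neither a gap: (a) in \ref{sl-ii} you implicitly use that $\oH_1(\SL_d(\bfZ))=0$ for $d\geq3$ so that $\oH^2(-;\bfZ/2) \cong \Hom(\oH_2(-;\bfZ),\bfZ/2)$ naturally, which is what makes nontriviality of the restriction on $\oH^2(-;\bfZ/2)$ dualise to nontriviality of the stabilisation on $\oH_2(-;\bfZ)$ — you should say this explicitly since it is the entire content of the duality step; and (b) the compatibility of \eqref{equ:sl-bar} with restriction along $\SL_d(\bfZ) \hookrightarrow \SL_{d+1}(\bfZ)$, which you use in both \ref{sl-ii} and \ref{sl-iv}, is immediate from the definition of \eqref{equ:sl-bar} as a pullback from the stable group, but again worth one sentence.
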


\begin{proof}
That the abelianisation $\oH_1(\SL_d(\bfZ);\bfZ)$ is cyclic of order $12$ for $d=2$ can be read off from the standard presentation of $\SL_2(\bfZ)$ (see e.g.\,\cite[Corollary 10.5]{MilnorAlgebraic}), and the fact that $\oH_1(\SL_d(\bfZ);\bfZ)$ vanishes for $d\ge3$ follows for instance from \cite[Corollary 10.3]{MilnorAlgebraic} together with the observation that for $d\ge3$ every elementary square matrix can be written as a commutator of two elementary matrices. The computation of $\oH_2(\SL_d(\bfZ);\bfZ)$ for $d=2$ follows from the isomorphism $\SL_2(\bfZ) \cong \bfZ/4 \ast_{\bfZ/2} \bfZ/6$ \cite[1.5.3]{Serre} and the Mayer--Vietoris sequence for group homology of amalgamated products \cite[Corollary II.7.7]{Brown} (c.f.~Exercise 3 on p.~52 of loc.cit.), for $d=3,4$ from \cite{vanderKallen}, and for $d\ge5$ from \cite[Corollary 5.8, Remark on p.\,48, Theorem 10.1]{MilnorAlgebraic}. Using these calculations, the computations $\oH^2(\SL_2(\bfZ);\bfZ) \cong \bfZ/12$ and $\oH^2(\SL_d(\bfZ);\bfZ/2) \cong \bfZ/2$ for $d\ge5$ implicitly claimed in \ref{sl-iii} and \ref{sl-iv} follow from the universal coefficient theorem. The latter also implies the part of \ref{sl-ii} for $d\ge3$ once we show \ref{sl-iv}. The claim on the stabilising map for $d=3$ can be proved via the arguments in \cite{vanderKallen}.

To prove \ref{sl-iii}, we use the general fact that for a given central extension
$0\ra A\ra E\ra G\ra 0$,
the Serre spectral sequence induces an exact sequence
$0\ra\oH^1(G;A)\ra \oH^1(E;A)\ra \oH^1(A;A)\ra\oH^2(G;A)\ra \oH^2(E;A)$.
The identity map induces a preferred class in $\oH^1(A;A)$ and its image in $\oH^2(G;A)$ is the class that classifies the given extension. Applying this to the extension \eqref{equ:sl-tilde}, we see that in order to show that this extension generates $\oH^2(\SL_2(\bfZ);\bfZ) $ it suffices to show that $\oH^2(\smash{\widetilde{\SL}}_2(\bfZ);\bfZ) $ vanishes. Now $\smash{\widetilde{\SL}}_2(\bfZ)$ agrees up to isomorphism with the braid group $B_3$ on three strands (see e.g.\,\cite[p.\,83]{MilnorAlgebraic}), so the claim follows from the universal coefficient theorem and the facts that $\oH_1(B_3;\bfZ)\cong\bfZ$ and $\oH_2(B_3;\bfZ) \cong \bfZ/2$ \cite[p.~32]{Arnold}.

For \ref{sl-iv}, we use the universal coefficient theorem to see that $\oH^2(\SL_2(\bfZ);\bfZ/2)\cong\bfZ/2$ is surjected upon by the map
$\oH^2(\SL_d(\bfZ);\bfZ)\ra \oH^2(\SL_d(\bfZ);\bfZ/2)$ induced by reduction modulo $2$, so it follows from \ref{sl-iii} that the extension
\eqref{equ:sl-bar} is nontrivial for $d=2$ and hence also for all higher values of $d$ since the former is the pullback of the latter along the inclusion $\SL_2(\bfZ)\ra \SL_d(\bfZ)$. As $\oH^2(\SL_d(\bfZ);\bfZ/2) \cong \bfZ/2$ has only a single nontrivial element for $d \geq 5$, this gives \ref{sl-iv}.\end{proof}

\subsection{Mapping class groups of homotopy tori and \cref{bigthm:mcg-homotopy-tori}} We now determine the mapping class groups of homotopy tori of the form $\cT=T^d\sharp\Sigma$. The argument has three steps.
\begin{enumerate}[leftmargin=1.3cm]
	\item[\ref{step:disc-vs-point}]Determine $\pi_0\,\Diff^+_*(T^d \sharp \Sigma)$ in terms of $\pi_0\,\Diff_\partial((T^d \sharp \Sigma)^\circ)$.
	\item[\ref{step:mcg-point}] Determine $\pi_0\,\Diff^+_*(T^d \sharp \Sigma)$.
	\item[\ref{step:mcg-no-point}] Determine $\pi_0\,\Diff^+(T^d \sharp \Sigma)$.
\end{enumerate}
Throughout this section, we abbreviate  $T^{d,\circ}\coloneq (T^d)^{\circ}$, fix a basis of $\oH_1(T^{d,\circ})\cong\bfZ^d$, and use the bases for the first homology groups of $T^d$, $T^{d,\circ}\natural\Sigma^\circ=(T^d\sharp\Sigma)^{\circ}$, and $T^d\sharp\Sigma$ that are induced by the chosen basis of $\oH_1(T^{d,\circ})$.

\renewcommand\thesubsubsection{Step $\circled{\arabic{subsubsection}}$}

% restore the default '\thesubsection' macro

\subsubsection{Fixing a disc or a point}\label{step:disc-vs-point}
We first determine the group $\pi_0\,\Diff_\partial(\cT^{\circ})$ in terms of the group $\pi_0\,\Diff^+_\ast(\cT)$. This step works for general homotopy tori $\cT$, not just those of the form $T^d\sharp\Sigma$.

\begin{lem}\label{lem:mcg-torus-bdy} For a homotopy $d$-torus $\cT$, there are pullback squares
	\[\begin{tikzcd}[column sep=0.7cm, row sep=0.5cm] \pi_0\,\Diff_\partial(\cT^{\circ}) \rar {\ext}\dar{\cong} & \pi_0\,\Diff^+_\ast(\cT) \dar{\cong} \\
	\widetilde{\SL}_d(\bfZ) \rar & \SL_d(\bfZ),\end{tikzcd}\text{for $d=2$ and}\begin{tikzcd}[column sep=0.7cm, row sep=0.5cm] \pi_0\,\Diff_\partial(\cT^{\circ}) \rar {\ext} \dar & \pi_0\,\Diff^+_\ast(\cT) \dar \\
	\overline{\SL}_d(\bfZ) \rar & \SL_d(\bfZ)\end{tikzcd}\quad\text{for $d\ge3$}\]
\end{lem}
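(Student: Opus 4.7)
The plan is to handle $d=2$ and $d\ge 3$ separately. In both cases the strategy is to exhibit $\pi_0\,\Diff_\partial(\cT^\circ)$ as a central extension of $\pi_0\,\Diff^+_\ast(\cT)$ via the collar twist sequence \eqref{equ:ct-exact-sequence}, and then identify this extension.

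For $d=2$, the only homotopy $2$-torus is $T^2$ (by the classification of surfaces), and classical surface theory provides all needed inputs: the $\oH_1$-action gives $\pi_0\,\Diff^+(T^2)\cong\SL_2(\bfZ)$ and, using the translation section of basepoint evaluation $\Diff^+(T^2)\to T^2$, also $\pi_0\,\Diff^+_\ast(T^2)\cong\SL_2(\bfZ)$; the mapping class group of the once-punctured torus relative to the boundary is the three-strand braid group $B_3\cong\widetilde{\SL}_2(\bfZ)$ (cf.\ the proof of \cref{lem:low-degree-homology}\,\ref{sl-iii}). Both vertical maps are then isomorphisms, so the square commutes and is automatically a pullback.

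For $d\ge3$, I apply $\pi_1$ to \eqref{diff-to-block}. The top row combined with \eqref{equ:ct-exact-sequence} and \cref{cor:collar-twist-injectivity} yields a central extension
\[0\lra\bfZ/2\lra\pi_0\,\Diff_\partial(\cT^\circ)\lra\pi_0\,\Diff^+_\ast(\cT)\lra0.\]
The bottom row of \eqref{diff-to-block} only depends on the underlying topological manifold of $\cT$, which is $T^d$ by the Borel conjecture, so \cref{lem:pointed-block-homeo} identifies $\pi_0\,\BlockHomeo^+_\ast(\cT)$ with $\SL_d(\bfZ)$; together with $\pi_1\,\STOP\cong\bfZ/2$ (from Kirby--Siebenmann's $\pi_i(\TOP/\OO)=0$ for $i\le 2$), the bottom row gives a sequence
\[\bfZ/2\cong\pi_1\,\STOP\lra\pi_0\,\BlockHomeo_{T_\ast M}(\cT)\lra\SL_d(\bfZ)\lra 0.\]
The leftmost vertical of the induced map of sequences is the stabilisation isomorphism $\pi_1\,\SO(d)\to\pi_1\,\STOP$ used in the proof of \cref{cor:collar-twist-injectivity}, so once the bottom sequence is known to be a central $\bfZ/2$-extension, the top will automatically be the pullback of the bottom along $\pi_0\,\Diff^+_\ast(\cT)\to\SL_d(\bfZ)$. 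Identifying the bottom extension with $\overline{\SL}_d(\bfZ)\to\SL_d(\bfZ)$ then gives the claimed pullback square.

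The main obstacle is this last identification. Under the equivalence $\BlockBHomeo^+_\ast(\cT)\simeq\BSL_d(\bfZ)$ of \cref{lem:pointed-block-homeo}, the derivative classifying map $\BlockBHomeo^+_\ast(\cT)\to\BSTOP$ is realised by the standard linear action of $\SL_d(\bfZ)$ on $T^d$, whose derivative at the origin is the standard inclusion $\SL_d(\bfZ)\le\SL_d(\bfR)$; hence the classifying map factors as $\BSL_d(\bfZ)\to\BSL_d(\bfR)\to\BSL(\bfR)\to\BSTOP$, and because $\SL(\bfR)\to\STOP$ is an isomorphism on $\pi_1$, the class of the bottom extension in $\oH^2(\SL_d(\bfZ);\bfZ/2)$ is the pullback of the generator of $\oH^2(\SL(\bfR);\bfZ/2)$, which is by construction the class of $\overline{\SL}_d(\bfZ)\to\SL_d(\bfZ)$. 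This class is nontrivial by \cref{lem:low-degree-homology}\,\ref{sl-iv}, which in particular confirms that the bottom sequence is a genuine central $\bfZ/2$-extension as required. The delicate point in this argument is checking that the block derivative classifying map really does agree, under the equivalence of \cref{lem:pointed-block-homeo}, with the one induced by the standard smooth $\SL_d(\bfZ)$-action on $T^d$; I would verify this by exhibiting the standard action as an explicit map $\BSL_d(\bfZ)\to\BlockBHomeo^+_\ast(T^d)$ realising the equivalence of \cref{lem:pointed-block-homeo} and comparing stable vertical tangent bundles at the fixed section.
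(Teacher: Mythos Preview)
Your proof is correct, and for $d\ge3$ it takes a genuinely different route from the paper's. Both arguments begin the same way: map the collar-twist extension \eqref{equ:ct-exact-sequence} to the block-homeomorphism row of \eqref{diff-to-block}, use \cref{lem:pointed-block-homeo} to identify $\pi_0\,\BlockHomeo^+_\ast(T^d)\cong\SL_d(\bfZ)$, and reduce the problem to identifying the resulting central $\bfZ/2$-extension of $\SL_d(\bfZ)$ with $\overline{\SL}_d(\bfZ)$. The difference lies in how that identification is made.

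You do it directly: realise the homotopy inverse of the equivalence in \cref{lem:pointed-block-homeo} by the standard smooth $\SL_d(\bfZ)$-action on $T^d$, and then read off the block derivative map as $\BSL_d(\bfZ)\to\BSO(d)\to\BSTOP$ via the right square of \eqref{diff-to-block}. This is clean and works uniformly in $d\ge3$; the point you flag as delicate is genuinely the crux, but your verification sketch is sound once one notes that $\BlockBHomeo^+_\ast(T^d)$ is a $K(\SL_d(\bfZ),1)$ by \cref{lem:pointed-block-homeo}, so any section of the homology action map is automatically a homotopy inverse. One small expository point: you could more cleanly justify injectivity on the left of the bottom row by observing $\pi_1\,\BlockHomeo^+_\ast(T^d)=0$ (again from \cref{lem:pointed-block-homeo}), rather than deducing it from nontriviality of the extension class, which is slightly circular as written.

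The paper instead avoids tracking the block derivative through the equivalence of \cref{lem:pointed-block-homeo}. It first observes that taking products with $S^1$ shows the bottom extension for $d$ is pulled back from that for $d+1$ along $\SL_d(\bfZ)\hookrightarrow\SL_{d+1}(\bfZ)$, so it suffices to prove nontriviality for $d\ge5$ where $\oH^2(\SL_d(\bfZ);\bfZ/2)\cong\bfZ/2$. It then pulls back further to $\SL_2(\bfZ)$ via the product map $T^2\hookrightarrow T^d$ and \cref{prop:ct-ctdsum-product}\,\ref{enum:ct-product}, landing on the known $d=2$ extension $\overline{\SL}_2(\bfZ)$, which is nontrivial by \cref{lem:low-degree-homology}\,\ref{sl-iv}. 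The trade-off: the paper's argument is more indirect but uses only naturality plus the classical $d=2$ input, whereas yours is shorter but requires checking compatibility of the block and smooth derivative maps under an abstract equivalence.
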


\begin{proof}If $d=2$, then $\cT$ is the standard $2$-torus $T^2$ for which the claimed square is well-known (for a reference, compare the standard presentations of $\pi_0\,\Diff_\partial(T^{d,\circ})$ and $\widetilde{\SL}_d(\bfZ)$ e.g.\,in \cite[p.82--83]{MilnorAlgebraic} and \cite[Section 5]{Korkmaz}). For $d\ge3$,  we consider the map of central extensions
\begin{equation}\label{equ:pullback-diff-to-block}
\begin{tikzcd}
0\rar&\pi_1\,\SO(d)\rar\arrow[d,"\cong"]&\pi_0\,\Diff_\partial(\cT^\circ) \rar\dar&\pi_0\,\Diff^+_\ast(\cT) \rar\dar& 0\\
0\rar&\pi_1\,\STOP\rar&\pi_0\,\BlockHomeo_{T_\ast T^d}(T^d) \rar&\pi_0\,\BlockHomeo^+_\ast(T^{d}) \rar& 0
\end{tikzcd}
\end{equation}
induced by \eqref{diff-to-block} for $M=\cT$, using that the bottom sequence only depends on the underlying topological manifold. Exactness at $\pi_1\,\STOP$ follows from the fact that $\pi_1\,\BlockHomeo^+_\ast(T^{d})=0$ by \cref{lem:pointed-block-homeo} and exactness at $\pi_1\,\SO(d)$ follows from exactness at $\pi_1\,\STOP$. \cref{lem:pointed-block-homeo} also shows that the homology action map $\pi_0\,\BlockHomeo^+_\ast(T^{d})\ra \SL_d(\bfZ)$ is an isomorphism, so we are left to show that the bottom extension is isomorphic to $\smash{0\ra\bfZ/2\ra\overline{\SL}_d(\bfZ)\ra\SL_d(\bfZ)\ra0}$. It suffices to show this for large enough $d$, since the bottom extension in dimension $d$ maps by taking products with $S^1$ to the corresponding extension in dimension $d+1$ (which have both kernel $\pi_1\STOP$), so the extension for $d$ is the pullback of the extension for $d+1$ along the inclusion $\SL_d(\bfZ)\ra \SL_{d+1}(\bfZ)$. We may thus assume $d\ge5$ in which case there is a single nontrivial central extension of $\SL_d(\bfZ)$ by $\bfZ/2$ (see \cref{lem:low-degree-homology}), so we only need to exclude that the bottom extension in \eqref{equ:pullback-diff-to-block} is trivial. To show this, we consider \eqref{equ:pullback-diff-to-block} for $\cT=T^d$ and extend it to the top as
\[
\begin{tikzcd}
0\rar&\pi_1\,\SO(2)\rar\arrow[d,"\inc_*"]&\pi_0\,\Diff_\partial(T^{2,\circ}) \rar\dar&\pi_0\,\Diff^+_\ast(T^2) \rar\dar& 0\\
0\rar&\pi_1\,\SO(d)\rar\arrow[d,"\cong"]&\pi_0\,\Diff_\partial(T^{d,\circ}) \rar\dar&\pi_0\,\Diff^+_\ast(T^d) \rar\dar& 0\\
0\rar&\pi_1\,\STOP\rar&\pi_0\,\BlockHomeo_{T_\ast T^d}(T^d) \rar&\pi_0\,\BlockHomeo^+_\ast(T^{d}) \rar& 0.
\end{tikzcd}
\]
where the top middle vertical map is induced by taking products with $\id_{T^{d-2}}$ followed by restriction, commutativity of the left upper square follows by an application of \cref{prop:ct-ctdsum-product} \ref{enum:ct-product} to $M=T^2$ and $N=T^{d-2}$, and the right upper square is induced by the commutativity of the left upper square. We will show that the bottom extension is nontrivial by showing that its pullback along the composition $\pi_0\,\Diff^+_\ast(T^2) \ra\pi_0\,\BlockHomeo^+_\ast(T^{d})$ is nontrivial. This composition is isomorphic to the inclusion $\SL_2(\bfZ)\ra \SL_d(\bfZ)$ and the composition $\pi_1\,\SO(2)\ra \pi_1\,\STOP$ to the quotient map $\bfZ\ra \bfZ/2$, so it follows that the pullback in question is isomorphic to the mod $2$ reduction of the extension $\smash{0\ra \bfZ\ra \widetilde{\SL}_2(\bfZ)\ra \SL_2(\bfZ)\ra 0}$, i.e.\,the extension $\smash{0\ra \bfZ/2\ra \overline{\SL}_2(\bfZ)\ra \SL_2(\bfZ)\ra 0}$. The latter is nontrivial by \cref{lem:low-degree-homology} \ref{sl-iv}.\end{proof}

\subsubsection{The pointed mapping class group of $T^d\sharp\Sigma$} \label{step:mcg-point} Next, we determine the group $\pi_0\,\Diff^+_*(T^d \sharp \Sigma)$. For $\Sigma=S^d$ the evaluation fibration $\Diff^+(T^d)\ra T^d$ whose fibre is $\Diff^+_*(T^d )$ has a splitting given by the standard action of $T^d$ on itself, so the long exact sequence in homotopy groups induces the first out of two isomorphisms
\vspace{-0.1cm}
\[\pi_0\,\Diff^+_*(T^d )\cong \pi_0\,\Diff^+(T^d)\overset{d\ge6}{\cong} \SL_d(\bfZ)\ltimes\Omega;\]
the second isomorphism was explained in the introduction. Combining this with \cref{lem:mcg-torus-bdy} for $\cT=T^d$, we obtain an isomorphism
\begin{equation}\label{equ:mcg-with-bdy}\pi_0\,\Diff_\partial(T^{d,\circ})\cong
 \overline{\SL}_d(\bfZ)\ltimes \Omega\quad\text{for }d\ge6.\end{equation} Now recall that the collar twist $t_{T^d}\in \pi_0\,\Diff_\partial(T^{d,\circ})$ generates the kernel of the map to $\pi_0\,\Diff^+_\ast(T^{d})$ so it corresponds under the isomorphism \eqref{equ:mcg-with-bdy} to the element $(t_d,0)$ where $t_d\in \overline{\SL}_d(\bfZ)$ is the central element that generates the kernel of the map to $\SL_d(\bfZ)$. The composition\vspace{-0.1cm} \begin{equation}\label{equ:insert-htp-sphere}
 \Theta_{d+1}\cong\pi_0\,\Diff_\partial(D^d)\xrightarrow{\id_{T^{d,\circ}}\natural(-)}\pi_0\,\Diff_\partial(T^{d,\circ}),\end{equation} which we abbreviate by $\iota_d\colon\Theta_{d+1}\ra \pi_0\,\Diff_\partial(T^{d,\circ})$, can be identified in terms of \eqref{equ:mcg-with-bdy} as follows:
\begin{lem}\label{lem:plugging-in-disc} With respect to the isomorphism \eqref{equ:mcg-with-bdy}, the composition \eqref{equ:insert-htp-sphere} agrees  with the inclusion $\Theta_{d+1}\le \overline{\SL}_d(\bfZ)\ltimes\Omega$  given by the $(j=0)$-summand in \eqref{equ:torelli}.
\end{lem}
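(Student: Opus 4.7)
The plan is to exploit the pullback description of $\pi_0\,\Diff_\partial(T^{d,\circ})\cong\overline{\SL}_d(\bfZ)\ltimes\Omega$ provided by \cref{lem:mcg-torus-bdy} together with the map of central extensions \eqref{equ:pullback-diff-to-block}. Under this description, a class $[\phi]\in\pi_0\,\Diff_\partial(T^{d,\circ})$ is encoded by the pair $(\bar\alpha([\phi]),\ext([\phi]))$, where $\ext([\phi])\in\pi_0\,\Diff^+_\ast(T^d)\cong\SL_d(\bfZ)\ltimes\Omega$ is its extension by the identity and $\bar\alpha([\phi])\in\pi_0\,\BlockHomeo_{T_\ast T^d}(T^d)\cong\overline{\SL}_d(\bfZ)$ is its image under the left vertical map in \eqref{equ:pullback-diff-to-block}. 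Moreover, the identification of the collar twist $t_{T^d}$ with the central generator $(t_d,0)$ recalled just before the lemma shows that the embedding $\Omega\hookrightarrow\overline{\SL}_d(\bfZ)\ltimes\Omega$ of the normal factor consists precisely of those classes whose $\bar\alpha$-component equals the identity $e\in\overline{\SL}_d(\bfZ)$.

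First I would verify that $\ext\circ\iota_d$ equals the $(j=0)$-summand inclusion of Hatcher's identification $\pi_0\,\Diff^+(T^d)\cong\SL_d(\bfZ)\ltimes\Omega$. This is a comparison of two ``extension by identity'' constructions: Hatcher's $(j=0)$-inclusion is, by construction, the homomorphism $\Theta_{d+1}\cong\pi_0\,\Diff_\partial(D^d)\to\pi_0\,\Diff^+(T^d)$ that extends a boundary-fixing diffeomorphism of an embedded disc by the identity, and this coincides with $\ext\circ\iota_d$ once the basepoint $\ast\in T^d$ used to identify $\pi_0\,\Diff^+(T^d)\cong\pi_0\,\Diff^+_\ast(T^d)$ is chosen to lie inside a disc disjoint from the one used to form $\iota_d$.

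The main step is to prove that $\bar\alpha(\iota_d(\Sigma))=e\in\overline{\SL}_d(\bfZ)$. Fixing a representative $\phi\in\Diff_\partial(D^d)$ for $\Sigma\in\Theta_{d+1}$, the extension $\hat\phi\in\Diff^+_\ast(T^d)$ underlying $\iota_d(\Sigma)$ is a diffeomorphism supported in a disc $D\subset T^d\setminus\{\ast\}$, and the Alexander trick produces a topological isotopy from $\id_{D^d}$ to $\phi$ inside $\Homeo_\partial(D^d)$. Extending this isotopy by the identity across $T^d$ gives a topological isotopy from $\id_{T^d}$ to $\hat\phi$ supported in $D$ and therefore fixing $T_\ast T^d$ pointwise throughout, so $\hat\phi$ is trivial in $\pi_0\,\Homeo_{T_\ast T^d}(T^d)$ and hence also in $\pi_0\,\BlockHomeo_{T_\ast T^d}(T^d)$, as required. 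The principal care needed is in the setup of the first paragraph, namely tracing through the proof of \cref{lem:mcg-torus-bdy} to certify that the embedding of $\Omega$ into $\overline{\SL}_d(\bfZ)\ltimes\Omega$ is indeed the ``identity on the $\overline{\SL}_d(\bfZ)$-factor'' locus; once this is in hand, both steps go through as described.
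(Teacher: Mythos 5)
Your proposal is correct and follows essentially the same route as the paper. The paper also uses the Alexander trick to show that $\iota_d$ lands in the kernel of the forgetful map to $\pi_0\,\BlockHomeo_{T_\ast T^d}(T^d)$ (hence in $\Omega$, exactly your $\bar\alpha$-component being trivial), and then invokes Hatcher's computation to identify $\ext\circ\iota_d$ with the $(j=0)$-summand inclusion; the only cosmetic difference is that the paper phrases the reduction as ``first show the image is in $\Omega$, then reduce to $\pi_0\,\Diff^+_*(T^d)$'' rather than your explicit pullback bookkeeping, which amounts to the same thing since the pullback square of \cref{lem:mcg-torus-bdy} identifies $\ker(\bar\alpha)$ with $\Omega$.
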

\begin{proof}
By an application of \cref{lem:mcg-torus-bdy}, this would follow from showing the analogous statement for $\pi_0\,\Diff^+_*(T^{d})\cong\SL_d(\bfZ)\ltimes\Omega$ instead of for $\pi_0\,\Diff_\partial(T^{d,\circ})$ once we know that the postcomposition of \eqref{equ:insert-htp-sphere} with the isomorphism \eqref{equ:mcg-with-bdy} has image in $\Omega\le  \overline{\SL}_d(\bfZ)\ltimes\Omega$. That the statement holds in $\pi_0\,\Diff^+_*(T^{d})\cong \pi_0\,\Diff^+(T^{d})$ follows from \cite[p.\,9, Remark (5)]{Hatcher}, so it suffices to show that \eqref{equ:insert-htp-sphere} lands in the subgroup $\Omega$. To show that, note that diffeomorphisms in the image of \eqref{equ:insert-htp-sphere}  are topologically isotopic to the identity since $\Homeo_\partial(D^d)$ is contractible by the Alexander trick. In particular, \eqref{equ:insert-htp-sphere} lands in the kernel of the forgetful map $\pi_0\Diff_\partial(T^{d,\circ})\ra \pi_0\,\BlockHomeo_{T_\ast T^d}(T^d)$ which agrees via the isomorphism $\pi_0\,\Diff^+_*(T^d)\cong\pi_0\,\Diff^+(T^d)\cong\SL_d(\bfZ)\ltimes\Omega$ precisely with the subgroup $\Omega$ (see the proof of \cref{lem:mcg-torus-bdy}), so the claim follows.
\end{proof}

Given a homotopy sphere $\Sigma\in\Theta_d$ and $d\ge7$, we have the isomorphism discussed in
\cref{section:ct-homotopy-spheres}\vspace{-0.1cm}
\begin{equation}\label{eqn:bdy-std-homotopy}
	\pi_0\,\Diff_\partial(T^{d,\circ})\overset{(-)\natural\id_{\Sigma^\circ}}{\cong}\pi_0\,\Diff_\partial((T^d\sharp\Sigma)^{\circ}),
\end{equation}
so from the exact sequence \eqref{equ:ct-exact-sequence}, we see that $\pi_0\,\Diff_*(T^d\sharp\Sigma)$ is isomorphic to the quotient of $\smash{\pi_0\,\Diff_\partial(T^{d,\circ})\cong \overline{\SL}_d(\bfZ)\ltimes\Omega}$ by the central subgroup generated by the preimage of the collar twist $t_{T^d\sharp\Sigma}\in \pi_0\,\Diff_\partial((T^d\sharp\Sigma)^{\circ})$ under \eqref{eqn:bdy-std-homotopy}.

\begin{lem}\label{lem:pointed-answer-complicated}The preimage of $\smash{t_{T^d\sharp\Sigma}\in \pi_0\Diff_\partial((T^d\sharp\Sigma)^{\circ})}$ in $ \overline{\SL}_d(\bfZ)\ltimes \Omega$ under the combined isomorphisms \eqref{eqn:bdy-std-homotopy} and \eqref{equ:mcg-with-bdy} is $(t_d,\eta\cdot\Sigma)\in \overline{\SL}_d(\bfZ)\ltimes \Omega$. Consequently, we have an isomorphism
	\begin{equation}\label{equ:algebraic-description}
		\pi_0\,\Diff^+_\ast(T^d\sharp\Sigma)\cong \big( \overline{\SL}_d(\bfZ)\ltimes \Omega\big)/\langle (t_d,\eta\cdot\Sigma)\rangle.
	\end{equation}
	for $d\ge7$ which is compatible with the homomorphisms to $\SL_d(\bfZ)$.
\end{lem}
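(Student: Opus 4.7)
The plan is to compute $t_{T^d\sharp\Sigma}$ by additive decomposition coming from \cref{prop:ct-ctdsum-product}\ref{enum:ct-ctdsum}, identify each summand separately under the isomorphism \eqref{equ:mcg-with-bdy}, and then combine them.

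\textbf{Step 1: Decompose the collar twist.} Applying \cref{prop:ct-ctdsum-product}\ref{enum:ct-ctdsum} to $M=T^d$ and $N=\Sigma$ gives
\[ t_{T^d\sharp\Sigma} = (t_{T^d}\natural\id_{\Sigma^\circ}) \cdot (\id_{T^{d,\circ}}\natural t_\Sigma) \in \pi_0\,\Diff_\partial((T^d\sharp\Sigma)^\circ), \]
where the two factors commute because each collar twist is central in its respective factor. I will pull this equation back along the isomorphism \eqref{eqn:bdy-std-homotopy}, whose inverse is given by $(-)\natural\id_{\overline{\Sigma}^\circ}$.

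\textbf{Step 2: Identify the first summand.} The element $t_{T^d}\natural\id_{\Sigma^\circ}$ pulls back along \eqref{eqn:bdy-std-homotopy} to $t_{T^d}\in\pi_0\,\Diff_\partial(T^{d,\circ})$. By the paragraph preceding the lemma, $t_{T^d}$ corresponds under \eqref{equ:mcg-with-bdy} to $(t_d,0)\in\overline{\SL}_d(\bfZ)\ltimes\Omega$, since $t_{T^d}$ generates the kernel of the surjection to $\pi_0\,\Diff^+_\ast(T^d)$ and that kernel is identified with $\langle t_d\rangle\times\{0\}$.

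\textbf{Step 3: Identify the second summand.} The element $\id_{T^{d,\circ}}\natural t_\Sigma$ pulls back along $(-)\natural\id_{\overline{\Sigma}^\circ}$ to
\[ \id_{T^{d,\circ}}\natural(t_\Sigma\natural\id_{\overline{\Sigma}^\circ}) \in \pi_0\,\Diff_\partial(T^{d,\circ}). \]
Under the isomorphism $\pi_0\,\Diff_\partial(\Sigma^\circ\natural\overline{\Sigma}^\circ)\cong\pi_0\,\Diff_\partial(D^d)\cong\Theta_{d+1}$, the inner element $t_\Sigma\natural\id_{\overline{\Sigma}^\circ}$ is exactly the homotopy sphere $T_\Sigma\in\Theta_{d+1}$ by the definition given in \cref{section:ct-homotopy-spheres}, and $T_\Sigma=\eta\cdot\Sigma$ by \cref{prop:Kreck-Levine}. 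Therefore the pulled-back element is $\iota_d(\eta\cdot\Sigma)$, and by \cref{lem:plugging-in-disc} this corresponds under \eqref{equ:mcg-with-bdy} to the element $(1,\eta\cdot\Sigma)\in\overline{\SL}_d(\bfZ)\ltimes\Omega$, with $\eta\cdot\Sigma$ sitting in the $(j=0)$-summand $\Theta_{d+1}$ of $\Omega$.

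\textbf{Step 4: Combine and conclude.} Since $t_d\in\overline{\SL}_d(\bfZ)$ projects to $1\in\SL_d(\bfZ)$, it acts trivially on $\Omega$, so $(t_d,0)\cdot(1,\eta\cdot\Sigma)=(t_d,\eta\cdot\Sigma)$, proving the first assertion. For the second, apply the exact sequence \eqref{equ:ct-exact-sequence} to $\cT=T^d\sharp\Sigma$: the map $\ext$ is surjective with kernel generated by the central collar twist $t_{T^d\sharp\Sigma}$, so
\[ \pi_0\,\Diff^+_\ast(T^d\sharp\Sigma) \cong \pi_0\,\Diff_\partial((T^d\sharp\Sigma)^\circ)/\langle t_{T^d\sharp\Sigma}\rangle \cong \bigl(\overline{\SL}_d(\bfZ)\ltimes\Omega\bigr)/\langle(t_d,\eta\cdot\Sigma)\rangle. \]
Compatibility with the homomorphisms to $\SL_d(\bfZ)$ is immediate from naturality of the constructions involved. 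The only step requiring care is Step 3, where one must verify that boundary connected summing with $\id_{\overline{\Sigma}^\circ}$ converts the twist $t_\Sigma$ into the canonical sphere $T_\Sigma\in\Theta_{d+1}$; this is essentially the content of the definition of $T_\Sigma$ in \cref{section:ct-homotopy-spheres}, which in turn uses that $\Sigma^\circ\natural\overline{\Sigma}^\circ$ is canonically diffeomorphic to $D^d$.
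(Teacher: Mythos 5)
Your proof is correct and takes essentially the same route as the paper's: decompose $t_{T^d\sharp\Sigma}$ via \cref{prop:ct-ctdsum-product}\ref{enum:ct-ctdsum}, pull back along $(-)\natural\id_{\overline{\Sigma}^\circ}$, recognise the second summand as $\iota_d(T_\Sigma)=\iota_d(\eta\cdot\Sigma)$ via the definition of $T_\Sigma$ and \cref{prop:Kreck-Levine}, and identify both summands under \eqref{equ:mcg-with-bdy} using the discussion before the lemma and \cref{lem:plugging-in-disc}. The only cosmetic point worth noting is that the two factors in Step 1 commute because they can be arranged to have disjoint supports (one in a collar inside $T^{d,\circ}$, the other in a collar inside $\Sigma^\circ$), rather than because each is central in its ``respective factor''; the paper sidesteps this by writing the relation additively.
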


\begin{proof}We already explained how the second part follows from the first. To prove the first, we use the relation $\smash{t_{T^d\sharp\Sigma}=t_{T^d}\natural\id_{\Sigma^\circ}+\id_{T^{d,\circ}}\natural t_{\Sigma}}$ in $\pi_0\,\Diff_\partial((T^d\sharp\Sigma)^{\circ})$ ensured by \cref{prop:ct-ctdsum-product} \ref{enum:ct-ctdsum}, using which we express the element in question in $\pi_0\,\Diff_\partial(T^{d,\circ})$ as
\[\big(t_{T^d}\natural\id_{\Sigma^\circ}+\id_{T^{d,\circ}}\natural t_{\Sigma}\big)\natural\id_{\overline{\Sigma}^\circ}=t_{T^d}+\id_{T^{d,\circ}}\natural (t_{\Sigma}\natural\id_{\overline{\Sigma}^\circ})=t_{T^d}+\iota_d(\eta\cdot\Sigma)\in\pi_0\,\Diff_\partial(T^{d,\circ}).\]
Here we used the equality $T_\Sigma=\eta\cdot\Sigma$ from \cref{prop:Kreck-Levine} and the definition of $T_\Sigma$ from \cref{section:ct-homotopy-spheres}. By the discussion above, $t_{T^d}$ and $\iota_d(\eta\cdot\Sigma)$ corresponds under the isomorphism \eqref{equ:mcg-with-bdy} to the elements $(t_d,0)$ and $(0,\eta\cdot\Sigma)$ in $\smash{\overline{\SL}_d(\bfZ)\ltimes \Omega}$, so the element we are looking for is indeed $(t_d,\eta\cdot\Sigma)$.\end{proof}

The quotient of $\overline{\SL}_d(\bfZ)\ltimes\Omega$ appearing in \cref{lem:pointed-answer-complicated} can be further simplified:

\begin{lem}\label{lem:explicit-quotient}There is an isomorphism of groups
\[\big( \overline{\SL}_d(\bfZ)\ltimes \Omega\big)/\langle (t_d,\eta\cdot\Sigma)\rangle\cong \begin{cases}
 \overline{\SL}_d(\bfZ)\ltimes\big(\Omega/\langle \eta\cdot\Sigma\rangle\big)&\text{ if }
 \eta\cdot\Sigma\in\Theta_{d+1}\text{is not divisible by }2\\
 \SL_d(\bfZ)\ltimes \Omega&\text{ if }\eta\cdot\Sigma\in\Theta_{d+1}\text{is divisible by }2\\
\end{cases}\]
that is compatible with the homomorphisms to $\SL_d(\bfZ)$.\end{lem}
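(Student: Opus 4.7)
The plan is to construct, in each case, an explicit surjective group homomorphism from $\overline{\SL}_d(\bfZ)\ltimes\Omega$ onto the claimed target whose kernel is precisely the central subgroup $\langle(t_d,\eta\cdot\Sigma)\rangle$, from which the stated isomorphisms follow; compatibility with the projections to $\SL_d(\bfZ)$ will be manifest from the constructions. Both cases rest on two common observations: $\eta\cdot\Sigma$ lies in the $j=0$ summand $\Theta_{d+1}\subset\Omega$ on which $\SL_d(\bfZ)$ acts trivially, and $t_d$ is central in $\overline{\SL}_d(\bfZ)$ and acts trivially on $\Omega$, so $(t_d,\eta\cdot\Sigma)$ indeed generates a central subgroup of order two.

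For the first case, since $\eta\cdot\Sigma\notin 2\Theta_{d+1}$ the class of $\eta\cdot\Sigma$ in the $\bfZ/2$-vector space $\Theta_{d+1}/2\Theta_{d+1}$ is nonzero, so there exists a homomorphism $\phi_0\colon\Theta_{d+1}\to\bfZ/2$ with $\phi_0(\eta\cdot\Sigma)=1$. Pulling back along the summand projection $\Omega\twoheadrightarrow\Theta_{d+1}$ yields an $\SL_d(\bfZ)$-invariant homomorphism $\phi\colon\Omega\to\bfZ/2$ (invariant because the $j=0$ summand carries the trivial action). I then define
\[f\colon\overline{\SL}_d(\bfZ)\ltimes\Omega\longrightarrow\overline{\SL}_d(\bfZ)\ltimes\bigl(\Omega/\langle\eta\cdot\Sigma\rangle\bigr),\qquad f(g,x)=\bigl(g\cdot t_d^{\phi(x)},[x]\bigr).\]
A direct calculation shows $f$ is a group homomorphism, using that $t_d$ is central, acts trivially on $\Omega$, and that $\phi$ is $\SL_d(\bfZ)$-invariant so $\phi(x_1+g_1\cdot x_2)=\phi(x_1)+\phi(x_2)$. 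Its kernel consists of pairs $(g,x)$ with $x\in\langle\eta\cdot\Sigma\rangle$ and $g=t_d^{\phi(x)}$, which is exactly $\{(1,0),(t_d,\eta\cdot\Sigma)\}$, and surjectivity is immediate. Hence $f$ descends to the desired isomorphism.

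For the second case, write $\eta\cdot\Sigma=2w$ with $w\in\Theta_{d+1}$. The plan is to produce a splitting of the projection from the quotient to $\SL_d(\bfZ)$, which will identify the quotient with $\SL_d(\bfZ)\ltimes\Omega$. Choose a set-theoretic section $s\colon\SL_d(\bfZ)\to\overline{\SL}_d(\bfZ)$ with associated $\bfZ/2$-valued $2$-cocycle $\alpha$ classifying the central extension. The naive assignment $g\mapsto[(s(g),0)]$ into the quotient fails to be a homomorphism by a discrepancy of $\alpha\cdot\eta\cdot\Sigma$, viewed as a $2$-cocycle with values in $\Omega$. One therefore seeks a $1$-cochain $\psi\colon\SL_d(\bfZ)\to\Omega$ with $\delta\psi=\alpha\cdot\eta\cdot\Sigma$, so that $g\mapsto[(s(g),\psi(g))]$ becomes a genuine splitting into the quotient; the divisibility hypothesis $\eta\cdot\Sigma=2w$ is used precisely to trivialise the obstruction class $[\alpha\cdot\eta\cdot\Sigma]\in H^2(\SL_d(\bfZ);\Omega)$, exploiting the factorisation $\bfZ/2\to\bfZ/4\to\Omega$ sending $1\mapsto w$ made available by the divisibility.

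The first case is essentially algebraic bookkeeping once $\phi$ has been chosen. The main obstacle is the second case: verifying that the obstruction $[\alpha\cdot\eta\cdot\Sigma]\in H^2(\SL_d(\bfZ);\Omega)$ really is trivialised by the divisibility hypothesis and producing the splitting $\psi$ explicitly is where the substance lies.
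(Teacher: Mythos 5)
Your treatment of the first case is correct and is essentially the paper's argument: the paper decomposes $\Omega\cong\Omega'\oplus(\Theta_{d+1}/\langle\eta\cdot\Sigma\rangle)\oplus\bfZ/2$ (using that $\eta\cdot\Sigma$ generates a $\bfZ/2$ summand when it is not divisible by $2$) and writes down the same surjection $(A,a+\Sigma'+x)\mapsto(t_d^{x}A,a+\Sigma')$ that you package via the choice of a functional $\phi_0\colon\Theta_{d+1}\to\bfZ/2$ with $\phi_0(\eta\cdot\Sigma)=1$. Your formulation is perhaps slightly cleaner, but it is the same construction.

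The second case is where the problem lies, and it is more serious than the incompleteness you acknowledge at the end: the proposed mechanism for trivialising the obstruction would not work. You want to conclude that the class $[\alpha\cdot\eta\cdot\Sigma]\in H^2(\SL_d(\bfZ);\Omega)$ vanishes by noting that the coefficient map $\bfZ/2\to\Omega$, $1\mapsto\eta\cdot\Sigma=2w$, factors as $\bfZ/2\xrightarrow{1\mapsto 2}\bfZ/4\xrightarrow{1\mapsto w}\Omega$, so that the obstruction factors through $H^2(\SL_d(\bfZ);\bfZ/4)$. But the map $H^2(\SL_d(\bfZ);\bfZ/2)\to H^2(\SL_d(\bfZ);\bfZ/4)$ induced by $1\mapsto 2$ is \emph{injective} (indeed an isomorphism for $d\ge 5$): this is immediate from the long exact sequence of $0\to\bfZ/2\xrightarrow{\times 2}\bfZ/4\to\bfZ/2\to 0$ together with $H^1(\SL_d(\bfZ);\bfZ/2)=\Hom(\SL_d(\bfZ),\bfZ/2)=0$ since $\SL_d(\bfZ)$ is perfect, or equivalently by UCT, since $H_1(\SL_d(\bfZ))=0$ and $\Hom(H_2(\SL_d(\bfZ)),\bfZ/2)\to\Hom(H_2(\SL_d(\bfZ)),\bfZ/4)$ sends the identity of $\bfZ/2\cong H_2(\SL_d(\bfZ))$ to the nonzero map $1\mapsto 2$. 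So the image of the nontrivial class in $H^2(\SL_d(\bfZ);\bfZ/4)$ is nonzero, and factoring through $\bfZ/4$ gains you nothing. The divisibility hypothesis, used in this way, does not trivialise the obstruction, so there is no reason for your proposed cochain $\psi$ to exist. For comparison, the paper takes a different route in this case: rather than attempting to factor the coefficient map, it decomposes $\Omega$ as before, identifies the relevant class in $H^2\bigl(\SL_d(\bfZ);(\bfZ/2\oplus\Theta_{d+1})/\langle(1,\eta\cdot\Sigma)\rangle\bigr)$, and argues that it can be detected after reducing coefficients modulo $2$, where the class visibly dies because $\eta\cdot\Sigma$ is zero in $\Theta_{d+1}/2\Theta_{d+1}$. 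Whether or not one finds that reduction step fully convincing, it is a genuinely different strategy from yours, and your specific plan of attack for the second case breaks down at the step you yourself flagged as "where the substance lies."
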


\begin{proof}
Since the element $\eta\cdot\Sigma$ of the finite abelian group $\Theta_{d+1}$ is of order $2$, it is not divisible by $2$ if and only if it generates a direct $\bfZ/2$-summand. We first assume that this is the case, so $\Theta_{d+1}\cong(\Theta_{d+1}/\langle\eta\cdot\Sigma\rangle)\oplus \bfZ/2$. Writing $\Omega'\le \Omega$ for the $\SL_d(\bfZ)$-invariant subgroup complementary to the central summand $\Theta_{d+1}\le \Omega$ in \eqref{equ:torelli}, we have $\smash{\overline{\SL}_d(\bfZ)\ltimes \Omega}\cong\smash{\overline{\SL}_d(\bfZ)\ltimes (\Omega'\oplus (\Theta_d/\langle\eta\cdot\Sigma\rangle)\oplus \bfZ/2)}$. The latter admits an epimorphism to $\smash{\overline{\SL}_d(\bfZ)\ltimes (\Omega'\oplus (\Theta_d/\langle\eta\cdot\Sigma\rangle))}$ given by sending $(A,a+\Sigma'+x)$ to $(x\cdot t_d\cdot A,a+\Sigma')$. This is well-defined since the central element  $\smash{t_d\in \overline{\SL}_d(\bfZ)}$ has order $2$ and acts trivially on $\Omega$ since the action factors by construction through $\SL_d(\bfZ)$. The kernel of this epimorphism is the subgroup generated by $(t_d,\eta\cdot \Sigma)$, so we obtain an isomorphism between $\smash{\big( \overline{\SL}_d(\bfZ)\ltimes \Omega\big)/\langle (t_d,\eta\cdot\Sigma)\rangle}$ and $\overline{\SL}_d(\bfZ)\ltimes \big(\Omega/\langle\eta\cdot\Sigma\rangle)\big)$, as claimed.

Now assume that $\eta\cdot\Sigma\in\Theta_{d+1}$ does not generate a direct summand, so is divisible by $2$. We have a (non-central) extension
\begin{equation}\label{equ:extension}0\lra \big(\Omega\oplus \bfZ/2\big)/\langle \eta\cdot\Sigma+[1]\rangle\lra \big(\overline{\SL}_d(\bfZ)\ltimes \Omega\big)/\langle (t_d,\eta\cdot\Sigma)\rangle \lra \SL_d(\bfZ)\lra0.\end{equation}
As $ \eta\cdot\Sigma$ has order $2$, the $\SL_d(\bfZ)$-equivariant map from $\Omega=\Omega\oplus0$ into the kernel of \eqref{equ:extension} induced by inclusion is an isomorphism, so in order to show that $\smash{\big( \overline{\SL}_d(\bfZ)\ltimes \Omega\big)/\langle (t_d,\eta\cdot\Sigma)\rangle}$ is isomorphic to $ \SL_d(\bfZ)\ltimes \Omega$ it suffices to show that \eqref{equ:extension} splits. Writing $\Omega=\Theta_{d+1}\oplus\Omega'$ as in the previous case, the extension \eqref{equ:extension} is by construction the sum of the trivial $\SL_d(\bfZ)$-extension by the $\SL_d(\bfZ)$-module $\Omega'$ with the central extension classified by the image of the unique nontrivial element in $\oH^2(\SL_d(\bfZ);\bfZ/2)$ under the composition
$\oH^2(\SL_d(\bfZ);\bfZ/2)\ra\oH^2(\SL_d(\bfZ);\bfZ/2\oplus\Theta_{d+1})\ra \oH^2(\SL_d(\bfZ);(\bfZ/2\oplus\Theta_{d+1})/(t_d,\eta\cdot\Sigma))$ induced by the inclusion and quotient maps of coefficients, so it suffices to show that this image is trivial. From the universal coefficient theorem and the computations in \cref{lem:low-degree-homology}, we see that for any abelian group, the map $\oH^2(\SL_d(\bfZ);A)\ra \oH^2(\SL_d(\bfZ);A/2)$ induced by reducing modulo $2$ is an isomorphism, so to show that the class in question is trivial, it suffices to do so after reducing modulo $2$. The latter follows by noting that the composition of $\SL_d(\bfZ)$-modules $\bfZ/2\subset \bfZ/2\oplus\Theta_{d+1}\ra \big(\bfZ/2\oplus\Theta_{d+1})/(t_d,\eta\cdot\Sigma)\big)$ is trivial after passing to $\big(\bfZ/2\oplus\Theta_{d+1})/(t_d,\eta\cdot\Sigma)\big)/2$ since $\eta\cdot\Sigma$ vanishes in $\Theta_{d+1}/2$ by assumption.
\end{proof}

\subsubsection{Fixing a point or not}\label{step:mcg-no-point} Using the description of   $\pi_0\,\Diff^+_*(T^d \sharp \Sigma)$ from the previous step, we are now in the position to determine $\pi_0\,\Diff^+(T^d \sharp \Sigma)$. In view of the fibration sequence
\begin{equation}\label{equ:pp-fibration}T^d \sharp \Sigma \lra \BDiff^+_*(T^d \sharp \Sigma) \lra \BDiff^+(T^d \sharp \Sigma)\end{equation}
this amounts to understanding the image of the ``point-pushing'' homomorphism $\pp \colon \pi_1\,T^d \sharp \Sigma \to \pi_0\,\Diff^+_*(T^d \sharp \Sigma)$. For $\Sigma = S^d$, the image is trivial a result of the action of $T^d$ on itself (see the beginning of \ref{step:mcg-point}), but for any other homotopy sphere such an action is not available and it in fact follows from the following lemma that the image is never trivial.

\begin{lem}\label{lem:point-pushing} For $d\ge7$ and $\Sigma\in \Theta_d$, the map
\[\bfZ^d= \pi_1\, T^d \sharp \Sigma\xlra{\pp}\pi_0\,\Diff^+_*(T^d \sharp \Sigma)\]
 agrees with the composition
\[\bfZ^d\xlra{(-)\otimes\Sigma }\bfZ^d\otimes\Theta_d \leq\overline{\SL}_d(\bfZ)\ltimes \Omega \cong  \pi_0\,\Diff_\partial(T^{d,\circ})\overset{(-)\natural \id_{\Sigma^\circ}}{\cong}\pi_0\,\Diff_\partial((T^d \sharp \Sigma)^\circ)\xlra{\ext} \pi_0\,\Diff^+_*(T^d \sharp \Sigma)\]
involving the isomorphism $\pi_0\,\Diff_\partial((T^d \sharp \Sigma)^\circ)\cong \overline{\SL}_d(\bfZ)\ltimes \Omega$ from \eqref{equ:mcg-with-bdy}.
\end{lem}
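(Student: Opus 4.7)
The strategy is to realise the point-pushing geometrically by an isotopy that drags the exotic disc once around a coordinate loop, and then to identify the resulting mapping class with the spinning generator of the $\bfZ^d\otimes\Theta_d$ summand of $\Omega$. Concretely, for the standard basis vector $e_i\in\bfZ^d$, I would realise $e_i$ by the straight loop $\gamma_i\colon t\mapsto \ast + t\cdot e_i$ in $T^d$, arranged to be disjoint from a small closed disc $D\subset T^d$ around $\ast$ used for the connected sum with $\Sigma$, except at its endpoints. The translation isotopy $\tau_t(x)=x+t\cdot e_i$ is a loop in $\Diff^+(T^d)$ based at $\id$ whose image under evaluation at $\ast$ is $\gamma_i$. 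Although $\tau_t$ moves the connected-sum locus, the isotopy extension theorem provides an isotopy $\widetilde\tau_t$ of $T^d\sharp\Sigma$ with $\widetilde\tau_0=\id$ and $\widetilde\tau_1(\ast)=\ast$, that agrees with $\tau_t$ outside a tubular neighbourhood $\nu\cong S^1\times D^{d-1}$ of $\gamma_i$ containing $D$. By the definition of the connecting map of the fibration \eqref{equ:pp-fibration}, one has $\pp(e_i)=[\widetilde\tau_1]\in\pi_0\,\Diff^+_\ast(T^d\sharp\Sigma)$.

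Next I would compute a preferred lift of $\pp(e_i)$ in $\pi_0\,\Diff_\partial((T^d\sharp\Sigma)^\circ)$ and transport it to $\pi_0\,\Diff_\partial(T^{d,\circ})$ via the inverse of $(-)\natural\id_{\Sigma^\circ}$. After modifying $\widetilde\tau_1$ up to isotopy to be the identity on a smaller closed disc $D'\subset D$, one obtains a lift $\widehat\tau_1\in\pi_0\,\Diff_\partial((T^d\sharp\Sigma)^\circ)$ supported in $\nu\setminus D'$. Gluing on $\overline{\Sigma}^\circ$ to cancel the exotic disc factor then yields a diffeomorphism of $T^{d,\circ}$ still supported in $\nu$ whose effect is to spin a copy of $\Sigma$ once around the circle $\gamma_i$. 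Under Hatcher's identification $\Omega\cong\pi_0\,\Tor^\Diff(T^d)$, this diffeomorphism is precisely the image of the generator $e_i\otimes\Sigma\in\bfZ^d\otimes\Theta_d$, since the $(\Lambda^1\bfZ^d)\otimes\Theta_d$ summand of $\Omega$ is constructed from the pseudo-isotopy identification $\Theta_d\cong\pi_0\,\Diff_\partial(D^{d-1})$ by precisely such mapping-torus spinning along the coordinate circles.

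Finally, linearity in $v\in\bfZ^d$ follows because $\pp$ is a group homomorphism and both sides agree on the basis $\{e_i\}$. The main obstacle is the identification in the second step: verifying that the informal ``spun disc'' monodromy extracted from the point-pushing isotopy really matches the specific generator $e_i\otimes\Sigma$ in the parametrisation of $\pi_0\,\Tor^\Diff(T^d)$. This requires carefully unwinding the construction of the $(\Lambda^1\bfZ^d)\otimes\Theta_d$ summand and comparing it with the mapping torus arising from the point-pushing isotopy; a convenient way to make the matching precise is to reformulate both constructions in terms of the Milnor--Munkres--Novikov pairing applied on the neighbourhood $S^1\times S^{d-1}\subset \nu$ of $\gamma_i$.
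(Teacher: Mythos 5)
Your overall strategy matches the paper's: reduce to standard basis vectors, realise the point-push geometrically as dragging the exotic disc around a circle, and compare the resulting diffeomorphism with the spinning generator $e_i\otimes\Sigma$ in $\Omega$. The differences are in execution, and a couple of them matter for rigour.

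First, you put the connect-sum disc $D$ ``around $\ast$'' and then ask the loop $\gamma_i$ to be disjoint from $D$ except at its endpoints. If $D$ contains $\ast$ in its interior, a loop based at $\ast$ must traverse an arc of $D$, not just touch it at a point, so this configuration is not geometrically coherent as stated. The paper sidesteps this by building $T^d\sharp\Sigma$ as an explicit mapping torus $([0,1]\times T^{d-1})/((1,x)\sim(0,F_\Sigma(x)))$ with base point $[0,0]$ on the core circle $[0,1]\times\{0\}$ and the disc where the exotic gluing happens disjoint from that circle; then the point-push is simply $\phi_t([s,x])=[s+t,x]$ (picking up $F_\Sigma$ once per wrap-around), and the time-$1$ map is visibly $\id\times f_\Sigma$ supported in $S^1\times\iota(D^{d-1})$. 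Your isotopy-extension version can be repaired by moving $D$ off the base point, but it is worth noting that it is harder to control the resulting diffeomorphism than in the mapping-torus model, where the ``spinning'' is immediate from the gluing formula rather than something one has to extract after the fact.

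Second, you correctly flag the identification of the resulting diffeomorphism with the image of $e_i\otimes\Sigma\in\bfZ^d\otimes\Theta_d\leq\Omega$ under $\eqref{equ:mcg-with-bdy}$ as the main gap, but you leave it open, offering the Milnor--Munkres--Novikov pairing as a possible route. The paper instead proves a slightly more general statement --- that the composition $\bfZ^d\otimes\Theta_d\le\Omega\le\overline{\SL}_d(\bfZ)\ltimes\Omega\cong\pi_0\,\Diff_\partial(T^{d,\circ})$ sends $x\otimes\Sigma'$ to the diffeomorphism obtained by choosing an embedded $S^1\times D^{d-1}$ representing $x$ and using $\id\times f_{\Sigma'}$ there --- and traces this through \cref{lem:plugging-in-disc} and $\pi_0\,\Diff^+_*(T^d)\cong\pi_0\,\Diff^+(T^d)$ to the explicit description in \cite[p.~9, Remark (5)]{HatcherConcordance}. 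A further ingredient the paper uses, and which you omit, is that the embedding $S^1\times D^{d-1}\hookrightarrow T^d\sharp\Sigma\setminus\{[0,0]\}$ representing $e_1$ is unique up to isotopy for $d\ge4$; without this, the two descriptions of the diffeomorphism you are comparing need not give the same element of the mapping class group.

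Finally, the paper reduces to $e_1$ via $\pi_0\,\Diff_\partial(T^{d,\circ})$-equivariance of both compositions rather than your argument via linearity of $\pp$ on a basis; both work since the target is the abelian Torelli group, so this is only a stylistic difference.
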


\begin{proof}It suffices to show that the two compositions agree on the first standard basis vector $e_1\in\bfZ^d$, since both compositions are $\pi_0\,\Diff_\partial(T^{d,\circ})$-equivariant (the action on the source is through $\SL_d(\bfZ)$ and the action on the target is by conjugation after extending diffeomorphisms from $T^{d,\circ}$ to $T^d\sharp\Sigma$ by the identity) and the orbit of $e_1\in\bfZ^d$ under the $\SL_d(\bfZ)$-action spans $\bfZ^d$.

We work with the following model of $T^d\sharp\Sigma$: view $T^{d-1}$ as $\bfR^{d-1}/\bfZ^{d-1}$, choose an orientation-preserving embedding $\iota \colon D^{d-1} \hookrightarrow T^{d-1} = \bfR^{d-1}/\bfZ^{d-1}$ disjoint from the origin $[0]\in \bfR^{d-1}/\bfZ^{d-1}=T^{d-1}$ and a representative $f_\Sigma \in \Diff_\partial(D^{d-1})$ of $\Sigma \in \pi_0\,\Diff_\partial(D^{d-1}) \cong \Theta_d$, extend $f_\Sigma$ by the identity to a diffeomorphism $F_\Sigma \in \Diff^+(T^{d-1})$ supported in $\interior(\iota(D^{d-1}))$, and form the mapping torus $T^d\sharp\Sigma \coloneq ([0,1] \times T^{d-1})/((1,x)\sim(0,F_\Sigma(x))$. We parametrise this quotient by $[0,1)\times T^{d-1}$ in the evident way, use $[0,0]\in T^d \sharp \Sigma$ as base point, and we view $T^{d,\circ}$ as the complement of an embedded disc $D^d\subset T^d \sharp \Sigma$ that contains the part where the nontrivial gluing happened (i.e.\,the image of $\{0\}\times\iota(D^{d-1})$ in the quotient) and is disjoint from the image of $[0,1]\times \{0\}$ in the quotient. The latter is so that  the loop $([(t,0)])_{t\in [0,1]}$ in $T^d\sharp\Sigma$ is contained in $T^{d,\circ}$. We chose a basis for $\oH_1(T^{d,\circ})$ such that this loop represents $e_1\in \bfZ^d$.

The first claim we show is that the image of $e_1\in\bfZ^d$ under the map $\pp\colon  \pi_1\, T^d \sharp \Sigma\ra\pi_0\,\Diff^+_*(T^d \sharp \Sigma)$ is represented by the diffeomorphism $\phi \in \Diff^+_*(T^d \sharp \Sigma)$ given by using $\id\times f_\Sigma$ on the image of $\id \times \iota$ in $T^d\sharp\Sigma$, and extending it to all of $T^d\sharp\Sigma$ by the identity. This is because being the connecting map in the long exact sequence induced by the evaluation fibration $\ev_{[0,0]}\colon \Diff^+(T^d\sharp\Sigma)\ra T^d\sharp\Sigma$ with fibre $\Diff_\ast^+(T^d\sharp\Sigma)$, the point-pushing map sends $e_1\in \bfZ^d$ to the isotopy class of any diffeomorphism $\phi_1$ that arises as the value at time $t=1$ of a path $(\phi_t)_{t\in [0,1]}$ in $\Diff^+(T^d \sharp \Sigma)$ with $\phi_0 = \id$ and $\phi_t([0,0]) = [t,0]$. A possible choice of such path is given by $\phi_t([s,x])\coloneq [s+t,x]$ for $s+t< 1$ and $\phi_t([s,x])\coloneq [s+t-1,F_\Sigma(x)]$ for $s+t \geq 1$, which indeed agrees with $\phi$ at time $1$.

The second claim we make is that the image of $e_1\in\bfZ^d$ under the second composition in the statement is given by the diffeomorphism obtained by choosing an orientation-preserving embedding $\iota'\colon D^{d-1}\hookrightarrow T^{d-1}$ such that the image of $\id\times \iota'$ in $T^d\sharp\Sigma$ is contained in $T^{d,\circ}$ and avoids the origin, using $\id\times f_\Sigma$ on the image of $\id\times\iota'$ in $T^d\sharp\Sigma$ and extending it to a diffeomorphism of  $T^d\sharp\Sigma$ by the identity. This would imply the result, since the image of $e_1$ under both maps in consideration arises from the following construction: choose an embedding $S^1\times D^{d-1}\hookrightarrow T^d\sharp\Sigma\backslash\{[0,0]\}$ that represents $e_1\in\bfZ^d=\oH_1(T^d\sharp\Sigma)$ (which is unique up to isotopy as $d\ge4$), use $\id\times f_\Sigma$ on this image, and extend by the identity.

To show this claim, we prove more generally that the composition $\bfZ^d\otimes \Theta_d\le \Omega\le  \overline{\SL}_d(\bfZ)\ltimes \Omega\cong\pi_0\,\Diff_\partial(T^{d,\circ})$ is given by sending $x\otimes\Sigma'\in \bfZ^d\otimes \Theta_d$ to the diffeomorphism obtained by representing $x\in\pi_1\,T^{d,\circ}$ by an embedding $S^1\times D^{d-1}\subset T^{d,\circ}$ and $\Sigma'\in\Theta_d\cong\pi_0\,\Diff_\partial(D^d)$ by a diffeomorphism $f_{\Sigma'}\in \Diff_\partial(D^d)$, using $\id\times f_{\Sigma'}$ on  $S^1\times D^{d-1}$ and extending it to $T^d$ by the identity. By the argument from the proof of \cref{lem:plugging-in-disc}, it suffices to show that the described diffeomorphism considered as a diffeomorphism of $\pi_0\,\Diff_*(T^d)\cong\SL_d(\bfZ)\ltimes\Omega$ agrees with the image of $e_1\otimes \Sigma'$ under the inclusion $\bfZ^d\otimes \Theta_d\le \Omega$. This follows from \cite[p.~9, Remark (5)]{HatcherConcordance}.
\end{proof}

Combining \cref{lem:point-pushing} with \cref{lem:pointed-answer-complicated}, the long exact sequence induced by \eqref{equ:pp-fibration} implies:

\begin{cor}For $d\ge7$ and $\Sigma\in\Theta_d$ there is an isomorphism
	\[\pi_0\,\Diff^+(T^d\sharp\Sigma)\cong \big(\overline{\SL}_d(\bfZ)\ltimes \Omega \big)/\big(\langle (t_d,\eta\cdot\Sigma)\rangle \oplus (\bfZ^d \otimes \langle \Sigma \rangle) \big)\]
	which is compatible with the homomorphisms to $\SL_d(\bfZ)$.
\end{cor}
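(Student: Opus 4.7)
The plan is to exploit the long exact sequence of homotopy groups associated to the fibration sequence \eqref{equ:pp-fibration}. Since $\BDiff^+(T^d\sharp\Sigma)$ is connected, the relevant portion of this sequence reads
\[
\pi_1\,T^d\sharp\Sigma \xlra{\pp} \pi_0\,\Diff^+_\ast(T^d\sharp\Sigma) \lra \pi_0\,\Diff^+(T^d\sharp\Sigma) \lra 0,
\]
so that $\pi_0\,\Diff^+(T^d\sharp\Sigma)$ is the quotient of $\pi_0\,\Diff^+_\ast(T^d\sharp\Sigma)$ by the normal closure of the image of the point-pushing map $\pp$.

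First, I would replace the source by its identification with $\big(\overline{\SL}_d(\bfZ)\ltimes\Omega\big)/\langle(t_d,\eta\cdot\Sigma)\rangle$ via \cref{lem:pointed-answer-complicated}, so the task becomes to describe the image of $\pp$ inside this quotient and take a further quotient. By \cref{lem:point-pushing} (combined with \cref{lem:mcg-torus-bdy} and \eqref{eqn:bdy-std-homotopy} to pass between $\pi_0\,\Diff_\partial((T^d\sharp\Sigma)^\circ)$ and $\pi_0\,\Diff^+_\ast(T^d\sharp\Sigma)$), this image is precisely the subgroup $\bfZ^d\otimes\langle\Sigma\rangle$ sitting inside the $j=1$ summand $\bfZ^d\otimes\Theta_d$ of $\Omega$ in the decomposition \eqref{equ:torelli}.

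Next I would justify that ``normal closure'' can be dropped. The subgroup $\bfZ^d\otimes\langle\Sigma\rangle\le\Omega$ is an $\SL_d(\bfZ)$-submodule of $\Omega$, as $\SL_d(\bfZ)$ acts only on the $\bfZ^d$ factor while fixing $\Theta_d$ pointwise; therefore its preimage in $\overline{\SL}_d(\bfZ)\ltimes\Omega$ is a normal subgroup, and its image remains normal after passing to the quotient by $\langle(t_d,\eta\cdot\Sigma)\rangle$. Hence the overall quotient is
\[
\big(\overline{\SL}_d(\bfZ)\ltimes\Omega\big)\Big/\big(\langle(t_d,\eta\cdot\Sigma)\rangle\cdot(\bfZ^d\otimes\langle\Sigma\rangle)\big).
\]

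Finally, I would check that the product of the two subgroups is in fact a direct sum: the $\Omega$-components of $(t_d,\eta\cdot\Sigma)$ lie in the $j=0$ summand $\Theta_{d+1}$, whereas $\bfZ^d\otimes\langle\Sigma\rangle$ lies in the $j=1$ summand $\bfZ^d\otimes\Theta_d$, and the $\overline{\SL}_d(\bfZ)$-component of $\bfZ^d\otimes\langle\Sigma\rangle$ is trivial, so the two subgroups intersect trivially. This yields the claimed presentation. Compatibility with the homomorphisms to $\SL_d(\bfZ)$ is immediate from compatibility in \cref{lem:pointed-answer-complicated} together with the fact that $\bfZ^d\otimes\langle\Sigma\rangle$ lies in the kernel $\Omega$ of the projection to $\SL_d(\bfZ)$. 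The only mildly delicate step is the normality/direct-sum bookkeeping, which is purely formal once the two lemmas are in hand.
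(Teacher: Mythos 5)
Your argument is correct and follows the same route the paper takes; the paper compresses it into a single sentence invoking \cref{lem:pointed-answer-complicated}, \cref{lem:point-pushing}, and the long exact sequence of \eqref{equ:pp-fibration}, and you have simply spelled out the bookkeeping. One small slip in your justification: the surjectivity $\pi_0\,\Diff^+_\ast(T^d\sharp\Sigma)\to\pi_0\,\Diff^+(T^d\sharp\Sigma)$ is a consequence of the \emph{fibre} $T^d\sharp\Sigma$ being connected (so that $\pi_0(T^d\sharp\Sigma)=0$), not of $\BDiff^+(T^d\sharp\Sigma)$ being connected---the latter holds automatically for any classifying space and only controls the long exact sequence one degree lower. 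Also note that the normality of the image of $\pp$ is automatic, as it equals the kernel of the group homomorphism $\pi_0\,\Diff^+_\ast\to\pi_0\,\Diff^+$ by exactness; your additional observation that $\bfZ^d\otimes\langle\Sigma\rangle$ is an $\SL_d(\bfZ)$-submodule of $\Omega$ is correct but not strictly needed for that point.
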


From this, the asserted identification of $\pi_0\,\Diff^+(T^d\sharp\Sigma)$ in \cref{bigthm:mcg-homotopy-tori} follows by proving that the right-hand quotient in the previous corollary can be simplified to the semidirect product
\[\begin{cases}\overline{\SL}_d(\bfZ)\ltimes\Big[
 \Omega/\big(\langle\eta\cdot\Sigma\rangle \oplus (\bfZ^d\otimes \langle\Sigma\rangle\big) \Big]&\text{if }\eta\cdot \Sigma\in\Theta_{d+1}\text{is not divisible by }2\\
\hfil \SL_d(\bfZ)\ltimes\Big[\Omega/ \big(\bfZ^d\otimes \langle\Sigma\rangle\big)\Big] &\text{if }\eta\cdot \Sigma\in\Theta_{d+1}\text{ is divisible by }2
\end{cases}\]
which follows by replacing $\Omega$ by $\Omega/(\bfZ^d\otimes \langle\Sigma\rangle)$ in the proof of \cref{lem:explicit-quotient}.

\renewcommand\thesubsubsection{\thesection.\arabic{subsection}}

\section{Splitting the homology action and the proof of \cref{bigthm:splitting}} To deduce \cref{bigthm:splitting} from \cref{bigthm:mcg-homotopy-tori}, we first determine for which homotopy tori $\cT$ the map $\pi_0\,\Diff^+(\cT)\ra \SL_d(\bfZ)$ is surjective. The following was stated in \cite[p.\,4]{BustamanteTshishiku} without proof.

\begin{lem}\label{lem:surjectivity}For a homotopy torus $\cT$ of dimension $d\neq4$, the map $\pi_0\,\Diff^+(\cT) \to \SL_d(\bfZ)$ is surjective if and only if $\cT$ is diffeomorphic to $T^d \# \Sigma$ for some $\Sigma\in\Theta_d$.
\end{lem}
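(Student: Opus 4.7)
When $d \leq 3$, every homotopy torus is diffeomorphic to $T^d$ by the Poincaré conjecture and Waldhausen, so $\cT = T^d \sharp S^d$ and the standard linear action of $\SL_d(\bfZ)$ splits the map. I assume $d \geq 5$ henceforth.

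\emph{Sufficiency.} Suppose $\cT \cong T^d \sharp \Sigma$. For $A \in \SL_d(\bfZ)$, the linear action produces $\phi_A \in \Diff^+_*(T^d)$, which by the exact sequence \eqref{equ:ct-exact-sequence} lifts to some $\tilde\phi_A \in \pi_0 \Diff_\partial(T^{d,\circ})$. Applying the homotopy equivalence $(-)\natural \id_{\Sigma^\circ}\colon \pi_0 \Diff_\partial(T^{d,\circ}) \to \pi_0 \Diff_\partial((T^d \sharp \Sigma)^\circ)$ recalled at the start of \cref{section:ct-homotopy-spheres} and then extending by the identity via $\ext$, one obtains an element of $\pi_0 \Diff^+(\cT)$. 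By construction this representative is the identity on the $\Sigma^\circ$ piece and equals $\tilde\phi_A$ on the $T^{d,\circ}$ piece, hence acts as $A$ on $H_1(\cT) \cong H_1(T^d)$.

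\emph{Necessity.} Suppose the map is surjective. Since $\cT$ is homeomorphic to $T^d$ (Borel conjecture), smoothing theory in dimensions $d \geq 5$ identifies the diffeomorphism class of $\cT$ with the orbit of a normal invariant $\nu(\cT) \in [T^d, \TOP/\OO]$ under the action of $\pi_0 \Homeo^+(T^d) \cong \SL_d(\bfZ)$ by precomposition. A direct verification (tracing definitions of the structure set) shows that the stabiliser of $\nu(\cT)$ under this action equals the image of $\pi_0 \Diff^+(\cT) \to \SL_d(\bfZ)$, so by hypothesis $\nu(\cT)$ is $\SL_d(\bfZ)$-fixed. The Postnikov tower of $\TOP/\OO$ reduces the fixed-point computation to computing, at each stage $j$, the $\SL_d(\bfZ)$-invariants of $H^j(T^d; \pi_j(\TOP/\OO)) \cong \Lambda^j(\bfZ^d)^* \otimes \pi_j(\TOP/\OO)$. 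A straightforward check with unit upper and lower triangular matrices in $\SL_d(\bfZ)$ shows $(\Lambda^j(\bfZ^d)^* \otimes M)^{\SL_d(\bfZ)} = 0$ for every abelian group $M$ and every $0 < j < d$, while the top exterior power $\Lambda^d(\bfZ^d)^* \cong \bfZ$ carries the trivial representation (the action has determinant $1$). Combined with $\pi_0(\TOP/\OO) = 0$, this forces $\nu(\cT)$ to lie in the top-degree summand $H^d(T^d; \pi_d(\TOP/\OO)) = \Theta_d$; elements of this summand are exactly the normal invariants of the smoothings $T^d \sharp \Sigma$ for $\Sigma \in \Theta_d$ (since such a connected sum is supported in a single top-dimensional disc), which completes the proof.

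The main obstacle is importing the surgery-theoretic dictionary: parameterizing smoothings of $T^d$ in high dimensions by $[T^d, \TOP/\OO]/\SL_d(\bfZ)$, and identifying the stabilizer of $\nu(\cT)$ with the image of the mapping class group on the nose. Once this is in place, the invariants computation on exterior powers is elementary.
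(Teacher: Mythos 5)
Your argument follows the paper's route in both directions. Sufficiency is the same idea: lift each matrix to $\pi_0\,\Diff_\partial(T^{d,\circ})$ and transport across $(-)\natural\id_{\Sigma^\circ}$, producing a diffeomorphism of $T^d\sharp\Sigma$ supported away from the exotic part that realises the given matrix on $H_1$. For necessity you also go through the smoothing-theoretic identification $\Sm^{\con}(T^d)\cong[T^d,\TOP/\OO]$ and the same degree-by-degree $\SL_d(\bfZ)$-invariants computation via elementary matrices; the paper uses the stable splitting of $T^d$ into a wedge of spheres to get an honest direct-sum decomposition $[T^d,\TOP/\OO]\cong\oplus_r\Hom(\Lambda^r\bfZ^d,\pi_r\,\TOP/\OO)$ of $\SL_d(\bfZ)$-modules, which is a bit cleaner than filtering by the Postnikov tower of $\TOP/\OO$ (where you would in principle need to know that the associated graded has no invariants, not just the $E_2$-page), but the two amount to the same thing here.

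The one genuine gap is your unjustified assertion that $\pi_0\,\Homeo^+(T^d)\cong\SL_d(\bfZ)$ acts on $\Sm^{\con}(T^d)\cong[T^d,\TOP/\OO]$ \emph{by precomposition}. Unwinding smoothing theory, the action of a concordance class $\alpha$ is a priori affine, $\delta(T,\alpha\circ\psi)=(\alpha^{-1})^*\delta(T,\psi)+\delta(T^d,\alpha)$, and the translation term $\delta(T^d,\alpha)$ records the smoothing obtained by transporting the standard structure along $\alpha$. For your fixed-point argument to make sense you need this term to vanish, so that the action is genuinely linear. It does vanish for $T^d$, but only because every class in $\pi_0\,\Homeo^+(T^d)\cong\SL_d(\bfZ)$ is represented by a diffeomorphism (a linear one); the paper states this explicitly. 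This is a special feature of the torus, not a formal consequence of the ``surgery-theoretic dictionary'' you flag as the main obstacle, and without it the stabiliser computation (where, incidentally, containment of the image in the stabiliser suffices; you do not need the equality you claim) does not reduce to an invariants problem in an $\SL_d(\bfZ)$-module.
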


\begin{proof}The direction $\Leftarrow$ is easy: if $\cT\cong T^d\sharp \Sigma$, then $\pi_0\,\Diff^+(\cT) \to \SL_d(\bfZ)$ is surjective because we can precompose it with the map $\ext_*\colon \pi_0\,\Diff_\partial((T^d)^\circ)\ra \pi_0\,\Diff^+(\cT)$ and use that $\pi_0\,\Diff_\partial((T^d)^\circ)\ra\SL_d(\bfZ)$ is surjective which holds for instance as a result of \cref{lem:mcg-torus-bdy}.

For the direction $\Rightarrow$, we may assume $d\ge5$ since for $d\le 3$ any torus $\cT$ is diffeomorphic to the standard torus $T^d$. This allows us to use smoothing theory \cite[Essay V]{KirbySiebenmann} which we briefly recall in a form suitable for our purposes: given a closed topological manifold $M$ of dimension $d\ge5$, the set $\Sm^{\con}(M)$ of concordance classes of smooth structures on $M$ is the set of equivalence classes of pairs $(T,\psi)$ of a smooth manifold $T$ together with a homeomorphism $\psi \colon T \to M$, where two pairs $(T,\psi)$ and $(T',\psi')$ are equivalent if there is a diffeomorphism $\Phi\colon T\ra T'$ such that the homeomorphisms $\psi$ and $\psi'\circ \Phi$ are concordant. The group $\smash{\pi_0\,\widetilde{\Homeo}(M)}$ of concordance classes of homeomorphisms acts on $\Sm^{\con}(M)$ by postcomposition and the set of orbits is in bijection with the set $\Sm^{\diff}(M)$ of diffeomorphism classes of smooth manifolds homeomorphic to $M$, induced by sending $(T,\psi)$ to $T$. There is a map
\[\eta\colon \mathrm{Sm}^{\con}(M) \lra \Lift(M,\BO\ra\BTOP)\]
to the set of isomorphism classes of pairs of a stable vector bundle over $M$ together with an isomorphism of the underlying stable Euclidean bundle with the stable topological tangent bundle $\tau^{\TOP}_M$ of $M$. The map $\eta$ is given by assigning a pair $(T,\psi)$ to the pullback $(\psi^{-1})^*\tau^{\Diff}_T$ of the stable tangent bundle of $T$ along $\psi^{-1}$, together with the isomorphism induced by the stable topological derivative of $\psi^{-1}$. The map $\eta$ turns out to be a bijection, by one of the main results of smoothing theory. Unwrapping definitions, one sees that the action of $\smash{\alpha\in \pi_0\,\widetilde{\Homeo}(M)}$ on $[T,\psi]\in\mathrm{Sm}^\con(M)$ is induced by pulling back the bundle along $\psi^{-1}$ and postcomposing the isomorphism with the stable topological derivative of $\psi^{-1}$. The set $\Lift(M,\BO\ra\BTOP)$ is a torsor for the group $[M,\TOP/\oO]$ of stable vector bundles on $M$ together with a trivialisation of the underlying stable Euclidean bundle; the group structure and the action are induced by taking direct sums. Thus,  if $M$ comes already equipped with a smooth structure then we obtain a bijection $\Lift(M,\BO\ra\BTOP)\cong [M,\TOP/\oO]$, postcomposition with which gives a bijection
\[\delta\colon \mathrm{Sm}^{\con}(M) \lra [M,\TOP/\oO].\]
Going through the definition, the action of $\smash{\alpha\in \pi_0\,\widetilde{\Homeo}(M)}$ on $[T,\psi]\in\mathrm{Sm}(M)$ translates to $\delta(T,\alpha\circ\psi)=(\alpha^{-1})^* \delta(T,\psi)+\delta(M,\alpha)$.

We now specialise to $M=T^d$. Given a homotopy torus $\cT$ of dimension $d\ge5$, a choice of homeomorphism $\varphi\colon \cT \to T^d$ induces a class $[\cT,\varphi]\in\Sm^{\con}(T^d)$ and a morphism $\pi_0\,\Diff^+(\cT)\ra \smash{\pi_0\,\widetilde{\Homeo}^+(T^d)}$ by conjugation with $\varphi$. This agrees with the map to $\SL_d(\bfZ)$ when precomposed with the action map $\smash{\pi_0\,\widetilde{\Homeo}^+(T^d)}\ra\SL_d(\bfZ)$. The latter is an isomorphism as a result of  \cref{lem:pointed-block-homeo} and the isomorphism $\smash{\pi_0\,\widetilde{\Homeo}^+_\ast(T^d)\cong \pi_0\,\widetilde{\Homeo}^+(T^d)}$ (use the action of $T^d$ on itself to see this), so it suffices to show that $\pi_0\,\Diff^+(\cT)\ra \smash{\pi_0\,\widetilde{\Homeo}^+(T^d)}$ is not surjective unless $\cT$ is diffeomorphic to $T^d\sharp\Sigma$ for some $\Sigma\in\Theta_d$. The image of $\pi_0\,\Diff^+(\cT)\ra \smash{\pi_0\,\widetilde{\Homeo}(T^d)}$ is contained in the stabiliser of $[\cT,\varphi]\in\Sm^{\con}(T^d)$, so it is enough to show that $[\cT,\varphi]\in \Sm(T^d)$ is not contained in the invariants of this action unless $\cT\cong T^d\sharp\Sigma$ for some $\Sigma\in\Theta_d$. Since any $\smash{\alpha\in \pi_0\,\widetilde{\Homeo}^+(T^d)\cong\SL_d(\bfZ)}$ is isotopic to a diffeomorphism of $T^d$, the terms $\delta(T^d,\alpha)$ in the above description of the $\pi_0\,\widetilde{\Homeo}(T^d)$-action $\mathrm{Sm}^{\con}(T^d) \cong [T^d,\TOP/\OO]$ vanishes, and thus the action is simply by precomposition. In particular, it is an action by group homomorphisms if we equip $[T^d,\TOP/\OO]$ with the group structure induced by the infinite loop space structure on $\TOP/\OO$. Using this infinite loop space structure and the fact that $T^d$ stably splits into a wedge of spheres we also get a direct sum decomposition of $\SL_d(\bfZ)$-modules $\smash{[T^d,\TOP/\OO] \cong \oplus_{r=1}^d \Hom(\Lambda^r \bfZ^d,\pi_r\,\TOP/\OO)}$. We will show below that the invariants of this action are given by the subgroup $\Hom(\Lambda^d \bfZ^d,\pi_d\,\TOP/\OO)\cong\pi_d\,\TOP/\OO\cong\Theta_d$. This will imply the claim, since the subgroup $\Theta_d\le [T^d,\TOP/\OO]\cong\Sm^{\con}(T^d)$ corresponds to the classes of the pairs $(T^d\sharp \Sigma,\id_{T^d}\sharp \beta)$ where $\beta\colon \Sigma\ra S^d$ is the unique homeomorphism up to isotopy that fixes the disc where the connected sum is taken, so in particular  $[\cT,\varphi]\in\Sm^{\con}(T^d)$ is not contained in this subgroup unless $\cT$ is diffeomorphic to $T^d\sharp \Sigma$ for some $\Sigma\in\Theta_d$.

To finish the proof, it thus suffices to show that for a finitely generated abelian group $A$, the $\SL_d(\bfZ)$-action on $\Hom(\Lambda^r \bfZ^d,A)$ by precomposition with the inverse has no invariants for $0<r<d$. This is isomorphic to the standard action on $\Lambda^r \bfZ^d\otimes A$ up to the automorphism of $\SL_d(\bfZ)$ given by taking inverse transpose, so we may equivalently show that $\Lambda^r \bfZ^d\otimes A$ has no invariants . Without loss of generality we may assume that $A=\bfZ/n$ is cyclic. In this case, $\Lambda^r \bfZ^d\otimes \bfZ/n$ has a basis as a $\bfZ/n$-module indexed by subsets $I\subset \{1,\ldots,d\}$ of cardinality $r$, where the basis vector $x_I$ corresponding to $I\subset \{1,\ldots,d\}$  is $x_{i_1} \wedge \cdots \wedge x_{i_r}$ for $i_1<\ldots<i_r$ and $I=\{i_1,\ldots,i_r\}$ where $x_1,\ldots x_d$ is the standard $\bfZ/n$-basis of  $\bfZ^d\otimes \bfZ/n$. Now observe that an elementary matrix $(I+E_{ij})\in\SL_d(\bfZ)$ for $1\le i,j\le d$ acts by sending $x_{I}$ to $x_{I} \pm x_{(I\backslash i)\cup j}$ if $i \in I$ and $j \notin I$, and to itself otherwise. On a general element $\smash{v=\sum_I \lambda_I(v)\cdot x_I \in  \Lambda^r \bfZ^d\otimes \bfZ/n}$, the matrix $(I+E_{ij})$ thus acts by
 \[\textstyle{v\longmapsto\underset{i\in I\text{ or }j\not\in I}{\sum}\lambda_I(v)\cdot x_I + \underset{i\not\in I\text{ and }j\in I}{\sum}(\lambda_I(v)+\lambda_{(I\backslash j)\cup i}(v))\cdot x_I},\]
so if $v$ is an invariant, then $\lambda_{(I\backslash j)\cup i}(v)=0$ for all $I$ with $i\not\in I$ and $j\in I$. But since $1\le i,j\le d$ were arbitrary and $0<r<d$, these are in fact all coefficients, so all invariants are zero.
\end{proof}

\subsection{Proof of \cref{bigthm:splitting}} We conclude this section with the proof of \cref{bigthm:splitting}, which says that the map $\pi_0\,\Diff^+(\cT) \to \SL_d(\bfZ)$ given by the action on $H_1(\cT) = \bfZ^d$ admits a splitting if and only if $\cT = T^d \sharp \Sigma$ for $\Sigma\in\Theta_d$ such that $\eta \cdot \Sigma \in \Theta_{d+1}$ divisible by 2.

\begin{proof}[Proof of \cref{bigthm:splitting}] We distinguish the cases whether a given homotopy torus $\cT$ of dimension $d\neq4$ is diffeomorphic to $T^d\sharp\Sigma$ for some $\Sigma\in\Theta_d$ or not. If it is not, then the map $\pi_0\,\Diff^+(\cT)\ra\SL_d(\bfZ)$ is not surjective by \cref{lem:surjectivity}, so it is in particular not split surjective. If it is, then by \cref{bigthm:mcg-homotopy-tori} the group $\pi_0\,\Diff^+(\cT)$ is isomorphic, compatibly with the map to $\SL_d(\bfZ)$, to a semidirect product of $\SL_d(\bfZ)$ or $\overline{\SL}_d(\bfZ)$ depending on whether $\eta\cdot \Sigma \in \Theta_{d+1}$ is divisible by $2$ or not. In the first case, the map to $\SL_d(\bfZ)$ visibly admits a splitting. In the second case, a hypothetical splitting would in particular induce a splitting of the projection $\overline{\SL}_d(\bfZ)\ra \SL_d(\bfZ)$, which does not exist since this extension is nontrivial. This finishes the proof.
\end{proof}

\section{Endomorphisms of $\SL_d(\bfZ)$ and the proofs of Theorems~\ref{thm:action-implies-splitting} and \ref{thm:SLdZ}}
\label{sec:SLdZ}

This section serves to deduce \cref{thm:action-implies-splitting} from the classification result for endomorphisms of $\SL_d(\bfZ)$ stated as \cref{thm:SLdZ}, and to prove the latter.

\subsection{Proof of \cref{thm:action-implies-splitting} assuming \cref{thm:SLdZ}}
Assuming \cref{thm:SLdZ}, we prove \cref{thm:action-implies-splitting}. We first assume $G=\Homeo^+(\cT)\cong \Homeo^+(T^d)$. Given a nontrivial homomorphism $\varphi\colon \SL_d(\bfZ)\ra \Homeo^+(T^d)$ for $d\ge3$, the composition with the action on homology $\Homeo^+(T^d)\ra \SL_d(\bfZ)$ is by \cref{thm:SLdZ} either trivial or an isomorphism, so we have to exclude the former. If it were trivial, then $\varphi$ would have image in $\Tor^{\TOP}(T^d) = \ker(\Homeo^+(T^d)\ra \SL_d(\bfZ))$. Suppose for contradiction that $\varphi\colon \SL_d(\bfZ)\ra \Tor^{\TOP}(T^d)$ is nontrivial. Its kernel is a normal subgroup, so by \cite[Corollary 1, p.~36]{Mennicke} it is either (a) contained in the centre $Z(\SL_d(\bfZ))$, which is trivial or $\bfZ/2$ depending on the parity of $d$, or (b) of finite index. In either case, the image of $\phi$ contains a nonabelian finite group $H$: in case (a) it contains $\SL_d(\bfZ)$ or $\PSL_d(\bfZ)$, so in particular a nonabelian finite group $H$, and in case (b) the image of $\varphi$ is finite itself, and also nonabelian since otherwise $\varphi$ would be trivial since $\SL_d(\bfZ)$ is perfect for $d \geq 3$ (see \cref{lem:low-degree-homology}).

To make use of the nonabelian finite subgroup $H\le \Tor^{\TOP}(T^d)$, following \cite{LeeRaymond}, we consider the extension $0\ra \bfZ^d\ra N_{\Homeo(\smash{\widetilde{T^d}})}(\bfZ^d)\ra \Homeo(T^d)\ra 0$ whose middle group is the normalizer of $\bfZ^d=\pi_1(T^d)$ considered as a subgroup of the homeomorphism group $\Homeo(\smash{\widetilde{T^d}})$ of the universal cover. Note that the induced action of $\Homeo(T^d)$ by $\bfZ^d$ agrees by construction with the action on the fundamental group. The pullback $0\ra \bfZ^d\ra E\ra H\ra 0$ of this extension along $H\le \Homeo(T^d)$ is, by the Corollary on p.~256 of loc.cit.\,admissible in the sense of p.~256 loc.cit.. The proof of Proposition 2 loc.cit.\,then shows that the centraliser $C_E(\bfZ^d)$ of $\bfZ^d$ in $E$ is abelian. But since $H\le \Tor^{\TOP}(T^d)$ acts trivially on $\bfZ^d$ we have $C_E(\bfZ^d)=E$, so $E$ is abelian and thus the same holds for $H$ which cannot be true by the choice of $H$, so $\varphi$ has to be trivial.

The case $G=\Diff^+(\cT)$ follows from the case $\Homeo^+(\cT)$ by postcomposing a given homomorphism $\varphi\colon \SL_d(\bfZ)\ra \Diff^+(\cT)$ with the inclusion $\Diff^+(\cT)\le \Homeo^+(\cT)$, so we are left to prove the addendum concerning homomorphisms from $\SL_d(\bfZ)$ into $\pi_0\,\Diff^+(\cT)$ or $\pi_0\,\Homeo^+(\cT)$ under the additional assumption $d\neq4,5$. By the same argument as before, it suffices to show that all homomorphisms from $\SL_d(\bfZ)$ into $\pi_0\,\Tor^{\Diff}(\cT)$ or $\pi_0\,\Tor^{\TOP}(\cT)$ are trivial. \cref{lem:torelli-abelian} below says that the latter two groups are abelian, so such morphisms factor over the (trivial) abelianisation of $\SL_d(\bfZ)$ (see \cref{lem:low-degree-homology}) and are therefore trivial, as claimed.

\begin{lem}\label{lem:torelli-abelian}For a homotopy torus $\cT$ of dimension $d\neq 4,5$, the kernels of the homology actions
\begin{align*}\pi_0\,\Tor^{\TOP}(\cT)&=\ker\big[\pi_0\,\Homeo^{+}(\cT)\ra \SL_d(\bfZ)\big]\\
\pi_0\,\Tor^{\Diff}(\cT)&=\ker\big[\pi_0\,\Diff^{+}(\cT)\ra \SL_d(\bfZ)\big]\end{align*}
are both abelian.
\end{lem}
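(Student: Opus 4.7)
The strategy is to reduce to the standard torus via rigidity and then appeal to known computations of Torelli and concordance groups.

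For $d\le 3$, every homotopy $d$-torus is diffeomorphic to the standard torus $T^d$ by classical results. The orientation-preserving mapping class groups $\pi_0\,\Diff^+(T^d)$ are trivial for $d=1$, and agree with $\SL_d(\bfZ)$ for $d=2$ (classical) and $d=3$ (Hatcher). In all these cases both Torelli groups are trivial.

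For $d\ge 6$, I would compare with block automorphism groups. By topological rigidity of tori and the argument of \cref{rem:general-aspherical}, the block spaces $\BlockHomeo^+(\cT)$ and $\BlockDiff^+(\cT)$ of any homotopy torus $\cT$ are equivalent to those of $T^d$. Combining \cref{lem:pointed-block-homeo} with the fibration $T^d \to \BlockBHomeo^+_\ast(T^d) \to \BlockBHomeo^+(T^d)$, one sees that the Torelli subgroup of $\pi_0\,\BlockHomeo^+(T^d)$ --- and hence also of $\pi_0\,\BlockDiff^+(\cT)$ --- is a subquotient of $\bfZ^d$, and so abelian.

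It then remains to control the kernel of the map from $\pi_0\,\Tor^{\TOP}(\cT)$ or $\pi_0\,\Tor^{\Diff}(\cT)$ to its block analogue. This kernel is a quotient of $\pi_0$ of the topological or smooth concordance space of $\cT$, which for $d\ge 6$ is abelian by Hatcher--Wagoner--Igusa in the smooth setting and by Kirby--Siebenmann surgery in the topological setting. Since a central extension of one abelian group by another is not automatically abelian, the crucial point is to show that the commutator pairing from the block Torelli (a subquotient of $\bfZ^d$) into the concordance-theoretic kernel vanishes. This should follow from functoriality of the concordance space under the conjugation action of $\pi_0\,\Tor(\cT)$, together with the fact that block Torelli elements act trivially on the underlying stable tangential structure of $\cT$.

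The main obstacle is making this last step rigorous, particularly the vanishing of the commutator pairing and the applicability of concordance stability in precisely the dimensions $d\ne 4,5$ allowed. A cleaner alternative, if available, would be to extend Hatcher's computation $\pi_0\,\Tor^{\Diff}(T^d) \cong \Omega$ directly to general homotopy tori, identifying both Torelli groups as subquotients of the abelian group $\Omega$ from \eqref{equ:torelli} (or its topological variant), thereby bypassing any extension argument entirely.
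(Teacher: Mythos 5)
Your proposal contains a genuine gap that you yourself flag: having presented both the block Torelli (a subquotient of $\bfZ^d$) and the concordance-theoretic kernel as abelian, you must still show the resulting extension is abelian, i.e.\ that the commutator pairing vanishes. The hand-wave about ``functoriality of the concordance space under conjugation'' and ``acting trivially on the stable tangential structure'' does not obviously deliver this: commutators in a central extension of abelian groups are governed by a genuine pairing that need not vanish, and nothing in your sketch pins it down. So as written the $d\ge6$ case is not proved.

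The paper sidesteps the extension problem entirely, and your parenthetical ``cleaner alternative'' is essentially what it does. For the topological case it simply uses that $\cT$ is homeomorphic to $T^d$, so $\pi_0\,\Tor^{\TOP}(\cT)\cong\pi_0\,\Tor^{\TOP}(T^d)\cong(\bfZ/2)^\infty$, which is abelian; no block comparison or concordance stability is needed. For the smooth case it observes that, since $\cT\simeq K(\bfZ^d,1)$, the homology action map is $\pi_0$ of $\Diff^+(\cT)\ra\hAut^+(\cT)$, so $\pi_0\,\Tor^{\Diff}(\cT)$ is surjected onto by $\pi_1\big(\hAut^+(\cT)/\Diff^+(\cT)\big)$; running Hatcher's argument with $\cT$ in place of $T^d$ (legitimate because $\cT$ is homeomorphic to $T^d$) identifies this $\pi_1$ with an explicit abelian group, namely $\big(\oplus_{0\le j\le d}(\Lambda^j\bfZ^d)\otimes\Theta_{d-j+1}\big)\oplus\big((\Lambda^{d-2}\bfZ^d)\otimes\bfZ/2\big)\oplus\bfZ/2^\infty$. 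A quotient of an abelian group is abelian, so no commutator pairing ever needs to be analysed. If you want to keep a block/concordance decomposition, you would need to supply the missing vanishing argument, whereas the paper's surjection-from-an-abelian-group route avoids it at essentially no cost.
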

\begin{proof}For $d\le 3$, the homotopy torus $\cT$ is diffeomorphic to the standard torus and both kernels $\pi_0\,\Tor^{\TOP}(\cT)$ and $\pi_0\,\Tor^{\Diff}(\cT)$ are trivial, so in particular abelian. To show the claim for $d\ge6$, note that $\pi_0\,\Tor^{\TOP}(\cT)\cong \pi_0\,\Tor^{\TOP}(T^d)$ because $\cT$ is homeomorphic to $T^d$, so $\pi_0\,\Tor^{\TOP}(\cT)$ is abelian since we have $\pi_0\,\Tor^{\TOP}(T^d)\cong (\bfZ/2)^\infty$ by \cite[Theorem 4.1]{HatcherConcordance}. To show that $\pi_0\,\Tor^{\Diff}(\cT)$ is abelian, note that as $\cT\simeq K(\bfZ^d,1)$ we may view the map $\pi_0\,\Diff^{+}(\cT)\ra \SL_d(\bfZ)$ as the induced map on path components of the map $\Diff^{+}(\cT)\ra \hAut^+(\cT)$ to the space of orientation-preserving homotopy equivalences, so $\pi_0\,\Tor^{\Diff}(\cT)$ receives an epimorphism from $\pi_1(\hAut^+(\cT)/\Diff^{+}(\cT))$. Replacing $T^d$ by $\cT$ in the argument for (3) on page 8 of \cite{HatcherConcordance} and using that $\cT$ is homeomorphic to $T^d$, we get that  $\pi_1(\hAut^+(\cT)/\Diff^{+}(\cT))$ is isomorphic to the abelian group $(\oplus_{0 \leq j \leq d} (\Lambda^{j} \bfZ^d)\otimes \Theta_{d-j+1})\oplus ( (\Lambda^{d-2} \bfZ^d)\otimes \bfZ/2)\oplus   \bfZ/2^{\infty}$, so the claim follows (the final step can also be proved via smoothing theory).
\end{proof}

\subsection{Proof of \cref{thm:SLdZ}} In the remainder of this section, we prove \cref{thm:SLdZ}. The proof makes use of the subgroup $\bfU_d<\SL_d(\bfZ)$ of unipotent upper triangular matrices which in particular contains the elementary matrices $E_{ij}$ for $1 \leq i<j \leq d$; these have $1$ on the diagonal and at the $(i,j)$th entry, and $0$ at all other entries. It is well-known that $\bfU_d$ is an $(d-1)$-step nilpotent group whose centre is generated by the elementary matrix $E_{1d}$, which is an iterated commutator of length $(d-1)$, namely $E_{1d}=[E_{12},[E_{23},[\ldots,[E_{d-2,d-1},E_{d-1,d}]]]]$. An important ingredient in the proof of \cref{thm:SLdZ} is the following lemma on complex representations of $\bfU_d$. In its statement and in all that follows, we write \[(-)^{-t}\colon \SL_d(\bfZ)\lra \SL_d(\bfZ)\] for the automorphism of $\SL_d(\bfZ)$ given by taking inverse-transpose.

\begin{lem}\label{lem:unipotent}
Fix $d\ge3$ and a homomorphism $\phi \colon \bfU_d\ra \GL_m(\bfC)$ with $m\le d$.
\begin{enumerate}
\item \label{enum:unipotent-i} Assume $d\ge4$. If $m<d$, or if $m=d$ and $\phi(E_{1d})$ is not a scalar, then $\phi(E_{1d})$ is unipotent.
\item \label{enum:unipotent-ii} If $m<d$ and $\phi(E_{ij})$ is unipotent for each $i<j$, then $\phi(E_{1d})=\id$. 
\item \label{enum:unipotent-iii} If $m=d$ and $\phi(E_{ij})$ is unipotent for each $i<j$ and $\phi(E_{1d})\neq\id$, then $\phi(E_{1d})-\id$ has rank $1$.
\item \label{enum:unipotent-iv} If $\phi(E_{ij})$ is unipotent and $\phi(E_{ij})-\id$ has rank $1$ for all $i<j$, then after possibly precomposing $\phi$ with $(-)^{-t}$, the matrices $\phi(E_{1d}),\ldots, \phi(E_{d-1d})$ all have the same fixed set.
\end{enumerate}
\end{lem}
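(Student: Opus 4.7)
My plan is to prove (ii) and (iii) quickly via a Kolchin-type triangularisation, handle (i) by combining Schur's lemma with the classification of minimal-dimensional irreducible representations of $\bfU_d$, and reduce (iv) to a combinatorial analysis of the Lie algebra generated by the rank-$1$ nilpotents $X_{ij}:=\phi(E_{ij})-\id$.

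For (ii) and (iii), I would first observe that $\phi(\bfU_d)\subset\GL_m(\bfC)$ is nilpotent (being a quotient of the nilpotent group $\bfU_d$) and generated by the unipotent elements $\phi(E_{ij})$; its Zariski closure is a connected nilpotent algebraic group generated by the connected one-parameter unipotent subgroups $\{\exp(t\log\phi(E_{ij})):t\in\bfC\}$, hence itself unipotent. After conjugation one therefore has $\phi(\bfU_d)\subseteq U_m$, the group of upper triangular matrices with $1$'s on the diagonal. Since $E_{1d}$ is an iterated commutator of length $d-1$, $\phi(E_{1d})$ lies in the $(d-1)^{\mathrm{st}}$ term $\Gamma_{d-1}(U_m)$ of the lower central series. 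This subgroup is trivial for $m<d$ (proving (ii)) and equals $\id+\bfC\cdot e_{1m}^{\mathrm{mat}}$ for $m=d$, so its nonidentity elements have rank-one deviation (proving (iii)).

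For (i), I would decompose $\bfC^m$ into generalised eigenspaces of the central element $\phi(E_{1d})$; each is $\phi(\bfU_d)$-invariant. Picking an irreducible $\bfU_d$-subrepresentation $W$ of a nontrivial $\lambda$-eigenspace, Schur's lemma forces $\phi(E_{1d})|_W=\lambda\id_W$, and since $E_{1d}$ is a commutator $\lambda^{\dim W}=1$, so $\lambda$ is a root of unity of some order $n$. To bound $\dim W$ from below, I would restrict to the Heisenberg subgroup $H=\langle E_{1j},E_{jd}:2\le j\le d-1\rangle$ of rank $2(d-2)+1$ (which has $E_{1d}$ as centre) and invoke the classical fact that an irreducible complex representation of a Heisenberg group of rank $2k+1$ with central character of order $n$ has dimension $n^k$. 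Thus $\dim W\ge n^{d-2}$, which for $d\ge 4$ and $n\ge 2$ already exceeds $d\ge m$ except in the borderline case $d=4$, $n=2$; there $\dim W=m=d=4$ and $\phi(E_{1d})=\lambda\id$ is scalar, precisely the exception allowed by the hypothesis.

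For (iv), part (iii) forces $m=d$. After triangularising $\phi(\bfU_d)\subseteq U_d$, writing $\phi(E_{ij})=\id+u_{ij}w_{ij}^T$ produces rank-$1$ strictly upper triangular $X_{ij}$ with $w_{ij}^T u_{ij}=0$. The bracket relation $[X_{ij},X_{jk}]=X_{ik}$, valid in the image Lie algebra by Baker--Campbell--Hausdorff, reads
\[u_{ik}w_{ik}^T=(w_{ij}^Tu_{jk})\,u_{ij}w_{jk}^T-(w_{jk}^Tu_{ij})\,u_{jk}w_{ij}^T,\]
and demanding that the right-hand side have rank one forces the triple $(i,j,k)$ into one of three mutually exclusive cases: (A)~$u_{ik}\parallel u_{ij}$ and $w_{ik}\parallel w_{jk}$; (B)~$u_{ik}\parallel u_{jk}$ and $w_{ik}\parallel w_{ij}$; (C)~$u_{ij}\parallel u_{jk}$ and $w_{ij}\parallel w_{jk}$ together. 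Applying this with $(i,k)=(1,d)$ and $j\in\{2,\dots,d-1\}$, case (A) uniformly in $j$ yields all $w_{jd}\parallel w_{1d}$, i.e.\ a common fixed set for the last column, while case (B) uniformly yields all $u_{jd}\parallel u_{1d}$ and all $w_{1j}\parallel w_{1d}$, i.e.\ a common fixed set for the first row; the latter becomes a common fixed set for the last column of the representation $\phi\circ\tau$, where $\tau\colon\bfU_d\to\bfU_d$ is the involution $E_{ij}\mapsto E_{d+1-j,d+1-i}^{-1}$ induced by $(-)^{-t}$ on $\SL_d(\bfZ)$ together with conjugation by the anti-diagonal matrix. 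The main obstacle will be ruling out mixed configurations (case (A) at some $j$ and case (B) at another): I would attack this by playing off the identities $[X_{i j'},X_{j' d}]=X_{id}$ for $i>1$ and $[X_{1j},X_{j j'}]=X_{1 j'}$ against one another, showing that any such mismatch propagates into a bracket whose expansion violates the rank-one hypothesis, and thereby collapsing the situation to one of the two uniform cases.
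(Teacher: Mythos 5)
Your arguments for (i)--(iii) take a genuinely different route from the paper's and are correct. The paper proves (i) and (ii) by an induction on $d$ that tracks eigenspaces of $\phi(E_{1d})$ and analyses block decompositions of centralisers, then revisits that block analysis to obtain (iii). You instead deduce (ii) and (iii) at once from a Kolchin-type argument: $\phi(\bfU_d)$ is nilpotent and generated by unipotents, so its Zariski closure is a nilpotent algebraic group with unipotent generators, hence unipotent, and conjugates into the upper unitriangular group $U_m$; then $\phi(E_{1d})$, being an iterated commutator of length $d-1$, lands in the $(d-1)$-st term of the lower central series of $U_m$, which is trivial when $m<d$ and, when $m=d$, is the one-parameter group of transvections in the $(1,d)$-entry. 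For (i) you replace the induction by Schur's lemma on generalised eigenspaces of the central element $\phi(E_{1d})$ combined with a dimension bound for irreducibles of the Heisenberg subgroup $H=\langle E_{1j},E_{jd}\rangle$: an irreducible with central character of order $n$ has dimension at least $n^{d-2}$, which rules out every eigenvalue $\lambda\neq1$ except the borderline $d=m=4$, $n=2$, where $\phi(E_{1d})$ is then a scalar. These arguments are shorter and more conceptual than the paper's inductions; you should phrase the Heisenberg fact as a lower bound and justify it, say via the $\zeta^{\pm1}$-shift action of $\rho(b_i)$ on joint generalised eigenspaces of the commuting $\rho(a_i)$.

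Part (iv), however, is not yet a proof, as you partially acknowledge. First, the identity $[X_{ij},X_{jk}]=X_{ik}$ is true but ``by Baker--Campbell--Hausdorff'' alone does not give it: you must also note that $\phi\bigl(\langle E_{ij},E_{jk}\rangle\bigr)$ is a quotient of the $2$-step nilpotent group $\bfU_3(\bfZ)$, so its Zariski closure is $2$-step nilpotent and the BCH series truncates after the first bracket; a direct matrix expansion of the group commutator of $\id+u_{ij}w_{ij}^T$ and $\id+u_{jk}w_{jk}^T$ contains cubic and quartic terms that do not vanish for formal reasons. (The paper sidesteps this by computing the group commutator $[\id+\Pi_{H,L},\,\id+\Pi_{H',L'}]$ directly in closed form for rank-one $\Pi$.) Second, and more seriously, the ``main obstacle'' you flag --- showing the (A)/(B) dichotomy is uniform over $j$ --- is the substance of (iv), and you only sketch a plan of attack. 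The paper closes it with a short argument you should absorb: $u_{2j}$ commutes with the central $u_{1d}$, hence preserves the hyperplane $H_{1d}$, and feeding this into the commutator formula for $u_{1j}=[u_{12},u_{2j}]$ propagates the equality $H_{12}=H_{1d}$ to $H_{1j}=H_{1d}$ for all $j$ (and symmetrically in the other branch). Your proposed strategy of playing bracket identities against one another does not exploit this centrality of $E_{1d}$, and as written it is a programme rather than a proof.
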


\begin{rem}\label{rmk:bader}
The argument in Bader's MathOverflow post \cite{mathoverflow} contains the claim that for any representation $\phi\colon \bfU_3\ra\GL_3(\bfC)$ the matrix $\phi(E_{13})$ is unipotent. This is incorrect:  \cref{lem:dim3} \ref{enum:dim3d} gives a representation $\bfU_3\ra\GL_3(\bfC)$ for which $\phi(E_{13})$ is a nontrivial scalar. Also \cref{lem:unipotent} \ref{enum:unipotent-i} fails for $d=m=3$ (the case \ref{enum:dim3c} of \cref{lem:dim3} below involves representations $\bfU_3\ra\GL_3(\bfC)$ for which $\phi(E_{13})$ is a non-scalar semisimple matrix).
\end{rem}

 For $d=3$, we will circumvent the use of \cref{lem:unipotent} \ref{enum:unipotent-i} in later proofs by means of the following:

\begin{lem}\label{lem:dim3}
Fix a homomorphism $\phi\colon\bfU_3\ra\GL_m(\bfC)$. If $m=2$, then either
\begin{enumerate}
\item \label{enum:dim3firsti} $\phi(E_{13})$ is unipotent, or
\item \label{enum:dim3firstii} there are $\mu,\nu\in\bfC^\times$ and $C\in\GL_2(\bfC)$ so that after postcomposing $\phi$ with conjugation by $C$,
\[E_{12}\mapsto \left(\begin{array}{ccc}\mu&0\\0&-\mu
\end{array}\right)\>\>\>\>\>\>
E_{23}\mapsto\left(\begin{array}{ccc}0&\nu\\1&0\\
\end{array}\right)\>\>\>\>\>\>
E_{13}\mapsto\left(\begin{array}{ccc}-1&0\\0&-1\\
\end{array}\right).
\]
\end{enumerate}
 If $m=3$, then either
\begin{enumerate}
\item \label{enum:dim3b} $\phi(E_{13})$ is unipotent,
\item \label{enum:dim3c} $\phi=\phi_1\oplus\phi_2$, where $\phi_i\colon\bfU_3\ra\GL_i(\bfC)$, up to conjugation, or
\item \label{enum:dim3d} there are $\lambda,\mu,\nu\in\bfC^\times$ and $C\in\GL_3(\bfC)$ so that $\lambda$ is a nontrivial cube root of $1$, and after postcomposing $\phi$ with conjugation by $C$,
\[E_{12}\mapsto\left(\begin{array}{ccc}\mu&0&0\\0&\lambda\mu&0\\0&0&\lambda^2\mu
\end{array}\right)\>\>\>\>\>\>
E_{23}\mapsto\left(\begin{array}{ccc}0&0&\nu\\1&0&0\\0&1&0
\end{array}\right)\>\>\>\>\>\>
E_{13}\mapsto\left(\begin{array}{ccc}\lambda&0&0\\0&\lambda&0\\0&0&\lambda
\end{array}\right).
\]
\end{enumerate}
\end{lem}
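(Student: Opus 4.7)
The plan is to analyse $\phi$ via the triple $(A,B,C) \coloneqq (\phi(E_{12}),\phi(E_{23}),\phi(E_{13}))$: the Heisenberg relation $E_{13} = [E_{12},E_{23}]$ with $E_{13}$ central in $\bfU_3$ forces $C = [A,B]$ to commute with both $A$ and $B$ and to satisfy $\det C = 1$. The proof then proceeds by cases on the Jordan structure of $C$. If $C$ is unipotent we are in case (i) or (b), so assume otherwise. If $C$ has two distinct eigenvalues, its generalised eigenspace decomposition of $\bfC^m$ is both $A$- and $B$-invariant: for $m = 2$ or for $m = 3$ with three distinct eigenvalues this makes $A$ and $B$ simultaneously diagonalisable, so that $[A,B] = I$ is unipotent, a contradiction; for $m = 3$ with exactly two distinct eigenvalues (multiplicities $2,1$) the same decomposition yields a splitting $\phi = \phi_1 \oplus \phi_2$ as in case (c). It remains to analyse $C$ with a single non-trivial eigenvalue $\lambda$, for which $\det C = \lambda^m = 1$ forces $\lambda$ to be a primitive $m$-th root of unity.

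For the scalar subcase $C = \lambda I$, the relation $AB = \lambda BA$ combined with the observation that $A(Bv) = \lambda \mu (Bv)$ whenever $Av = \mu v$ forces $A$ to have the $m$ distinct eigenvalues $\mu, \lambda\mu, \ldots, \lambda^{m-1}\mu$; in the basis $\{v, Bv, \ldots, B^{m-1}v\}$ with $v$ a $\mu$-eigenvector of $A$ and $B^m v = \nu v$ (using that $B^m$ commutes with $A$ and preserves the $\mu$-eigenline), one reads off the normal form of case (ii) for $m = 2$ (necessarily $\lambda = -1$) and of case (d) for $m = 3$ (with $\lambda$ a primitive cube root of unity). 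The non-scalar subcases $C = \lambda I + N$ with $0 \neq N$ nilpotent are excluded by analysing the centraliser of $N$ in $\GL_m(\bfC)$: if $N$ is a single Jordan block, that centraliser equals the commutative algebra $\bfC[N]$, forcing $[A,B] = I \neq C$; in the only other case ($m = 3$ with $N$ of Jordan type $(2,1)$), the centraliser preserves the flag $0 \subset \mathrm{im}(N) \subset \ker(N) \subset \bfC^3$ and hence admits a projection onto the abelian diagonal torus of the associated graded, under which every commutator is trivial while $C = \lambda I + N$ maps to the non-trivial element with coordinates $(\lambda,\lambda)$, a contradiction.

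I expect the main obstacle to be precisely this last torus-projection argument excluding the Jordan-type-$(2,1)$ subcase: unlike elsewhere in the proof, the relevant centraliser is neither commutative nor a torus, so the simpler diagonalisation and commutative-centraliser tricks do not apply, and one has to extract a genuine semisimple quotient of the centraliser that detects the eigenvalue of $C$ separately from its origin as a commutator.
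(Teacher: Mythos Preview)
The paper does not actually give a proof of this lemma: it states only that the argument is ``based on similar (and easier) analysis as the base case in the proof of \cref{lem:unipotent}\,\ref{enum:unipotent-i}'' and then omits it. Your argument is correct and is precisely the kind of analysis the paper has in mind. The base case of \cref{lem:unipotent}\,\ref{enum:unipotent-i} proceeds by restricting to eigenspaces of the central element and using the commutator relations to constrain the possible eigenvalues; your proof does the same, but more systematically, by casing on the Jordan structure of $C=\phi(E_{13})$ and exploiting that $A,B$ lie in the centraliser of $C$ (equivalently of its nilpotent part).

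A couple of small remarks. First, your claim that $\det C=1$ is immediate since $C$ is a commutator; this is what pins down the eigenvalue in the single-eigenvalue case. Second, in the Jordan-type-$(2,1)$ subcase your torus-projection argument is fine; concretely, in a basis where $N=E_{12}$ the centraliser of $N$ in $\GL_3(\bfC)$ consists of matrices of the form $\left(\begin{smallmatrix}a&b&c\\0&a&0\\0&h&i\end{smallmatrix}\right)$, which visibly preserve the flag $\langle e_1\rangle\subset\langle e_1,e_3\rangle\subset\bfC^3$, and the map to the diagonal of the associated graded sends such a matrix to $(a,i,a)$. Since $C=\lambda I+N$ maps to $(\lambda,\lambda,\lambda)$ with $\lambda\neq1$ while any commutator maps to the identity, the contradiction follows. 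This is exactly what you sketched; there is no genuine obstacle here beyond writing it out.
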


We omit the proof of \cref{lem:dim3} since it is based on similar (and easier) analysis as the base case in the proof of \cref{lem:unipotent} \ref{enum:unipotent-i} which we explain now.

\begin{proof}[Proof of Lemma \ref{lem:unipotent} \ref{enum:unipotent-i}] We do an induction on $d$. To simplify the notation we set $u_{ij}\coloneq\phi(E_{ij})$.

\smallskip

\noindent {\it Base case.} We treat the case $d=4$ by hand. To show that $u_{14}$ is unipotent, it suffices to prove that all its eigenvalues $\lambda$ equal $1$. Let $V_{\lambda}$ be the $\lambda$-eigenspace for $u_{14}$. Since $E_{14}$ is central in $\bfU_4$, restricting to $V_\lambda$ gives a homomorphism $\bfU_4\ra \GL(V_\lambda)$ whose image of $E_{ij}$  we denote by $u_{ij}'$. Next we distinguish cases depending on the dimension of $V_\lambda$.
By the assumption that $u_{14}=\phi(E_{14})$ is not a scalar when $m=d$, we know $\dim (V_\lambda)\le 3$. If $\dim V_\lambda=1$, then since $\GL_1(\bfC)=\bfC^\times$ is abelian, we have $u_{14}'=1$ because $E_{14}=[E_{13},E_{34}]$ is a commutator, so $\lambda=1$. If $\dim V_\lambda=2$, we consider the subgroup $\langle u_{13}',u_{34}',u_{14}'\rangle\le \GL(V_\lambda)$ generated by the images of $E_{13}$, $E_{34}$, and $E_{14}$ in $\GL(V_\lambda)$. By assumption $u_{14}'=\lambda\cdot \id_{2\times 2}$. Let $x \in V_\lambda$ be an eigenvector for $u_{13}'$ with eigenvalue $\mu$. Using the relation $[E_{13},E_{34}]=E_{14}$ we conclude that $(u_{34}')^i(x)$ is an eigenvector for $u_{13}'$ with eigenvalue $\lambda^i\mu$. Since $\dim V_\lambda=2$, this forces $\lambda^2=1$ because eigenvectors with different eigenvalues are linearly independent, and thus $\lambda=\pm1$. Suppose for a contradiction that $\lambda=-1$. Then $u_{13}'$ has two distinct eigenvalues $\mu$ and $-\mu$. Since $E_{13}$ is central in $\langle E_{12},E_{23}\rangle\cong\bfU_3$, we deduce that $u_{12}'$ and $u_{23}'$ are simultaneously diagonalisable; in particular they commute. But since $E_{13}=[E_{12},E_{23}]$, this implies $u_{13}'=\id$, which is a contradiction, so $\lambda$ has to be $1$. Finally, suppose that $\dim V_\lambda=3$. In this case the argument is very similar to the preceding case: by assumption $u_{14}'=\lambda\cdot \id_{3\times 3}$, and the relation $[E_{13},E_{34}]=E_{14}$ implies that $\langle\lambda\rangle\subset\bfC^\times$ acts freely on the eigenvalues of $u_{13}'$ which implies $\lambda^3=1$. If $\lambda\neq1$, then $u_{13}'$ has distinct eigenvalues $\mu$, $\lambda\mu$, and $\lambda^2\mu$ for some $\mu$. Using the fact that $E_{13}$ is both central and a commutator in $\langle E_{12},E_{23}\rangle\subset\bfU_4$, we reach a contradiction.

\smallskip

\noindent {\it Induction step.} Fix an eigenvalue $\lambda$ for $u_{1d}=\phi(E_{1d})$, and let $V_\lambda$ be the corresponding eigenspace. We have to show $\lambda=1$. As in the base case, since $u_{1d}$ is not a scalar if $d=m$, so $\dim(V_\lambda)\le d-1$ and since $E_{1d}$ is central in $\bfU_d$, the representation restricts to $\bfU_d\ra\GL(V_\lambda)$. As before we write $\smash{u_{ij}'}$ for the image of $E_{ij}$ under this homomorphism. Consider the subgroup $\langle E_{1,d-1},E_{d-1,d},E_{1,d}\rangle\cong\bfU_3$ of $\bfU_d$. Let $\mu$ be an eigenvalue of $\smash{u_{1,d-1}'}$ and let $V_\mu\le V_\lambda$ be the corresponding eigenspace. As above, $\lambda^i\mu$ is also an eigenvalue for $\smash{u_{1,d-1}'}$ for each $i$. Consider the subgroup $\langle E_{ij} \mid i<j\le d-1\rangle\cong \bfU_{d-1}$ of $\bfU_d$. Since $E_{1,d-1}$ is central in this copy of $\bfU_{d-1}$, there is an induced map $\bfU_{d-1}\ra\GL(V_\mu)$, $E_{ij}\mapsto u_{ij}''$. If $\lambda\neq1$, then $V_\mu$ is a proper subspace of $V_\lambda$ (since the eigenspaces for $\mu$ and $\lambda\mu$ are linearly independent). Then $\dim(V_\mu)\le d-2$, so the induction hypothesis implies that $\smash{u_{1,d-1}''}$ is unipotent, so $\mu=1$.  Since the same argument applies for each eigenspace of $u_{1,d-1}'$, we conclude that $1=\mu=\lambda\mu$, so $\lambda=1$ as claimed.
\end{proof}

\begin{proof}[Proof of Lemma \ref{lem:unipotent} \ref{enum:unipotent-ii}]
Fixing $\phi:\bfU_d\ra\GL_{m}(\bfC)$ such that $\phi(E_{ij})$ is unipotent for all $1 \leq i<j \leq d$, we want to show $\phi(E_{1d})=\id$. As before we write $u_{ij}=\phi(E_{ij})$. Note that the special case $m=d-1$ implies the case $m<d-1$, because if $m<d-1$ then we may restrict $\phi$ to the subgroup $\bfU_{m+1}\cong \langle E_{ij} \mid 1\le i<j\le m+1\rangle\le \bfU_d$ to conclude $u_{1,m+1}=\id$ from the special case, so using $E_{1d}=[E_{1,m+1},E_{m+1,d}]$ we get $u_{1d}=[u_{1,m+1},u_{m+1,d}]=\id$. To prove the special case $m=d-1$, we do an induction on the dimension $d$. 

\smallskip

\noindent {\it Base case.} To settle the case $\phi \colon \bfU_3\ra\GL_2(\bfC)$, suppose for a contradiction that $u_{13}$ is not the identity. Since it is unipotent by assumption, it has up to conjugation the form $\left(\begin{smallmatrix}
1&1\\0&1\end{smallmatrix}\right)$, so by postcomposing $\phi$ with this conjugation we may assume that $u_{13}$ equals this matrix. Since $E_{13}$ is central in $\bfU_3$, the image of $\phi$ is contained in the centraliser of $\left(\begin{smallmatrix}
1&1\\0&1\end{smallmatrix}\right)$ which consists of matrices of the form $\left(\begin{smallmatrix}
a&b\\0&a\end{smallmatrix}\right)$. This is an abelian subgroup,  so $u_{13}=[u_{12},u_{23}]$ is identity, a contradiction. 

\smallskip

\noindent {\it Induction step.} For the induction step, we fix $\phi \colon \bfU_d\ra\GL_{d-1}(\bfC)$ and suppose for a contradiction that $u_{1d}\neq\id$. Consider the subspaces $K_1\subset K_2\subset\bfC^{d-1}$ where $K_i=\ker(u_{1d}-\id)^i$. Writing $k_i\coloneq \dim K_i$ we have $k_1>0$ since $u_{1d}$ is unipotent, $\ell \coloneqq k_2-k_1>0$ since $u_{1d}\neq\id$, and $\ell\le k_1$ (one way to see this is to consider the Jordan normal form). Note that since $E_{1d}$ is central in $\bfU_d$, the image of $\phi$ preserves $K_2$ so we obtain a morphism $\phi'\colon \bfU_d\ra \GL(K_2)$ by restriction. We write $u_{ij}'$ for its image of $E_{ij}$. Setting $m\coloneq k_1-\ell\ge0$, we choose a basis for $K_2$ that extends a basis for $K_1$ and that has the property that
\begin{equation}\label{eqn:E1d-matrix}u_{1d}'=\left(
\begin{array}{ccc}
\id_{\ell\times\ell}&0&\id_{\ell\times\ell}\\
0&\id_{m\times m}&0\\
0&0&\id_{\ell\times\ell}.
\end{array}\right)\end{equation}
in this basis. To see that such a basis exists, it is again helpful to use the Jordan normal form. Since $E_{1d}$ is central in $\bfU_d$, the morphism $\bfU_d\ra \GL(K_2)\cong\GL_{2\ell+m}(\bfC)$ lands in the centraliser of \eqref{eqn:E1d-matrix} which are the matrices of the form
\begin{equation}\label{eqn:centralizer}\left(
\begin{array}{ccc}
A&X&Z\\
0&B&Y\\
0&0&A
\end{array}\right).\end{equation}
We claim that $\smash{u'_{1,\ell+m+1}}$ and $\smash{u'_{\ell+m+1,2\ell+m+1}}$ have the form
\begin{equation}\label{eqn:E1j-matrix}u'_{1,\ell+m+1}=\left(
\begin{array}{ccc}
\id_{\ell\times \ell}&0&Z\\
0&\id_{m\times m}&0\\
0&0&\id_{\ell\times\ell} 
\end{array}\right)\quad \text{and}\quad u'_{\ell+m+1,2\ell+m+1}=\left(
\begin{array}{ccc}
\id_{\ell\times\ell}&X&Z'\\
0&B&Y\\
0&0&\id_{\ell\times\ell}
\end{array}\right)
\end{equation}
for some $B,X,Y,Z$, and $Z'$. Assuming this claim for now, we observe that the matrices \eqref{eqn:E1j-matrix} commute, so $\smash{u'_{1,2\ell+m+1}=[u'_{1,\ell+m+1},u'_{\ell+m+1,2\ell+m+1}]}$ is the identity. If $2\ell+m+1=d$ then we are done since this contradicts \eqref{eqn:E1d-matrix}. If $2\ell+m+1<d$ then the relation  $\smash{u_{1d}'=[u'_{1,2\ell+m+1},u'_{2\ell+m+1,d}]}$ shows that $u_{1d}'$ is the identity, which again contradicts \eqref{eqn:E1d-matrix}. This leaves us with showing \eqref{eqn:E1j-matrix}. We first treat $\smash{u'_{1,\ell+m+1}}$. Since $\phi'$ has image in \eqref{eqn:centralizer}, we may postcompose it with 
\[\left(
\begin{array}{ccc}
A&X&Z\\
0&B&Y\\
0&0&A
\end{array}\right)
\mapsto\left(\begin{array}{cc}A&X\\0&B\end{array}\right)\>\>\>\text{ and }\>\>\>
\left(
\begin{array}{ccc}
A&X&Z\\
0&B&Y\\
0&0&A
\end{array}\right)
\mapsto\left(\begin{array}{cc}B&Y\\0&A\end{array}\right).
\]
to obtain two homomorphisms $\bfU_d\ra \GL_{\ell+m}(\bfC)$. We may apply the induction hypothesis to the restriction of these to the subgroup $\bfU_{\ell+m+1}\cong \langle E_{1,i}\mid 1<i\le \ell+m+1\rangle$ to conclude that the image of $\smash{u'_{1,\ell+m+1}}$ under these two homomorphism is the identity, so $\smash{u'_{1,\ell+m+1}}$ has the claimed form.

To deal with the second matrix $\smash{u'_{\ell+m+1,2\ell+m+1}}$ we argue similarly: postcompose $\phi'$ with the restriction to $A$ to obtain a morphism $\bfU_d\ra \GL_{\ell}(\bfC)$, restrict them to the subgroup $\bfU_{\ell+1}\cong \langle E_{\ell+m+1,i}\mid \ell+m+1<i\le 2\ell+m+1\rangle$ in $\bfU_d$, and apply the induction hypothesis.
\end{proof}

\begin{proof}[Proof of \cref{lem:unipotent} \ref{enum:unipotent-iii}] Fix $\phi \colon \bfU_d\ra\GL_d(\bfC)$ such that $u_{1d}=\phi(E_{1d})$ is unipotent and nontrivial. The subspace $K_2=\ker(u_{1d}-\id)^2$ is nontrivial, preserved by the image of $\phi$,  each $\phi(E_{ij})$ acts on it by a nontrivial unipotent, and $\phi(E_{1d})$ acts nontrivially on it, so \cref{lem:unipotent} \ref{enum:unipotent-ii} implies $K_2=\bfC^d$. Arguing as in the proof of \cref{lem:unipotent} \ref{enum:unipotent-ii}, up to changing basis (corresponding to postcomposing $\phi$ with a conjugation), we can assume that \eqref{eqn:E1d-matrix} holds and by the same argument as in the previous proof $\phi$ has image in matrices of the form \eqref{eqn:centralizer} and $u_{1,\ell+m+1}$ has the form \eqref{eqn:E1j-matrix}. We are left to show $\ell=1$ since then $u_{1d}$ has rank $1$ in view of \eqref{eqn:E1d-matrix}. Assuming for a contradiction that $\ell>1$, then $\ell+m+1 < d$, so $u_{1d}=[u_{1,\ell+m+1},u_{\ell+m+1,d}]$. Written out in matrices this equation reads as
\[
\left(
\begin{array}{ccc}
\id&0&\id\\
0&\id&0\\
0&0&\id
\end{array}\right)=\left(\begin{array}{ccc}
\id&0&Z\\
0&\id&0\\
0&0&\id
\end{array}\right)
\left(\begin{array}{ccc}
A&X&Z'\\
0&B&Y\\
0&0&A
\end{array}\right)\left(\begin{array}{ccc}
\id&0&-Z\\
0&\id&0\\
0&0&\id
\end{array}\right)\left(\begin{array}{ccc}
A&X&Z'\\
0&B&Y\\
0&0&A
\end{array}\right)^{-1}\]
which implies $ZA-AZ=A$, but this is a contradiction because the trace of $ZA-AZ$ is $0$, whereas that of $A$ is nonzero since $A$ is unipotent because so is $u_{1d}'$, by assumption. 
\end{proof}

Before proving Lemma \ref{lem:unipotent} \ref{enum:unipotent-iv}, we discuss some properties of rank-1 operators. Given subspaces $H,L\le \bfC^d$ with $\dim H=d-1$, $\dim L=1$, there is a rank-1 operator with kernel $H$ and image $L$, which is unique up to a unit, namely the composition
$\bfC^d\onto \bfC^d/H\cong\bfC\ra\bfC\cong L\hra\bfC^d$. In what follows, it will be convenient to consider rank-1 operators up to scalars; abusing notation, we will use $\Pi_{H,L}$ to denote either this equivalence class of rank-1 operator with kernel $H$ and image $L$. In terms of equivalence classes, the composition behaves as
\[\Pi_{H,L}\circ\Pi_{H',L'}=\begin{cases}
0&\text{ if }L'\subset H,\\
\Pi_{H',L}&\text{ if }L'\not\subset H.
\end{cases}
\]
The operator $u_{H,L} \coloneqq \id+\Pi_{H,L}$ (which is well-defined up to scaling $u_{H,L}-\id$ by a unit) is unipotent if and only if $L\subset H$ (otherwise $u_{H,L}$ is diagonalisable and nontrivial). In this case the fixed set of $u_{H,L}$ is $H$ and its inverse is $\id-\Pi_{H,L}$ which is another representative of $u_{H,L}$. 
Fixing two such equivalence classes of unipotent operators $u_{H,L}$ and $u_{H',L'}$, we have the commutator relation 
\begin{equation}\label{eqn:rank1}[u_{H,L}\>,\>u_{H',L'}]=
\begin{cases}
u_{H',L}&\text{ if }L\subset H'\text{ and }L'\not\subset H,\\
u_{H,L'}&\text{ if }L\not\subset H'\,\,\text{and }L'\subset H,\\
\id&\text{ if }L\subset H'\ \,\text{ and }L'\subset H.
\end{cases}
\end{equation}
If $L\not\subset H'$ and $L'\not\subset H$, then the commutator is not unipotent. 

The following observation will play a role in the proof of \cref{lem:unipotent} \ref{enum:unipotent-iv}: Fixing unipotent operators $u_{H_i,L_i}$ as above for $i=1,2,3$ and assuming firstly that $u_{H_1,L_1}$ commutes with $u_{H_j,L_j}$ for $j=2,3$ and secondly that
$u_{H_1,L_1}=[u_{H_2,L_2}\>,\>u_{H_3,L_3}]$, we may use the commutator formula from above to conclude that $L_j\subset H_1$ for $j=2,3$ and that $H_2=H_1$ or $H_3=H_1$.

\begin{proof}[Proof of Lemma \ref{lem:unipotent} \ref{enum:unipotent-iv}]
Since $\phi(E_{ij})-\id$ has rank $1$ for $i<j$, the operators $u_{ij}\coloneq \phi(E_{ij})=\id + (\phi(E_{ij})-\id)$ are for $i<j$ of the form $u_{H_{ij},L_{ij}}$ as discussed above where $H_{ij}$ is the kernel of $\phi(E_{ij})-\id$, i.e.\,the fixed set of $\phi(E_{ij})$. We claim that either  $H_{1d}=H_{2,d}=\cdots=H_{d-1,d}$ or $H_{1d}=H_{1,d-1}=\cdots=H_{12}$. This would imply the result, because the two cases are interchanged when precomposing $\phi$ with $(-)^{-t}$. To show this claim, we use that $u_{1d}$ commutes with $u_{ij}$ for $i<j$. Since $u_{1d}=[u_{12},u_{2d}]$, it follows from the discussion after \eqref{eqn:rank1} that either $H_{12}=H_{1d}$ or $H_{2d}=H_{1d}$. In the first case, we also have $H_{1j}=H_{1d}$ for all $2\le j\le d$, using $u_{1j}=[u_{12},u_{2j}]$ and the fact that $u_{2j}$ preserves $H_{1d}$ since it commutes with $u_{1d}$. Similarly, in the second case we also have $H_{j,d}=H_{1d}$ for all $2\le j\le d$ using $u_{j,d}=[u_{j,2},u_{2,d}]$ and that $u_{j,2}$ commutes with $u_{1,d}$. 
\end{proof}

We illustrate the utility of Lemma \ref{lem:unipotent} to study representations of $\SL_d(\bfZ)$ by the following two corollaries, which will both play a role in the proof of Theorem \ref{thm:SLdZ}.

\begin{cor}\label{cor:big-to-small}
	For $d\ge3$ and $m<d$, all homomorphisms $\phi \colon \SL_d(\bfZ)\ra \GL_m(\bfC)$ are trivial.
\end{cor}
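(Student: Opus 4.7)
The plan is to reduce the corollary to showing that every elementary matrix $E_{ij}$ with $i\neq j$ is sent by $\phi$ to the identity; since these generate $\SL_d(\bfZ)$, that will force $\phi$ to be trivial. The case $m=1$ is immediate because then $\GL_m(\bfC)=\bfC^\times$ is abelian and $\SL_d(\bfZ)$ has trivial abelianisation for $d\ge3$ by \cref{lem:low-degree-homology}\ref{sl-i}, so I may assume $m\ge2$.

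The key auxiliary observation I would use is that for any pair $1\le a\neq b\le d$ there is a permutation matrix $P\in\GL_d(\bfZ)$ with $PE_{1d}P^{-1}=E_{ab}$; namely the matrix of any permutation $\sigma$ with $\sigma(1)=a$ and $\sigma(d)=b$. Since conjugation by such $P$ is an automorphism $c_P$ of $\SL_d(\bfZ)$ (even when $\det P=-1$), the composition $\phi\circ c_P$ is another representation to $\GL_m(\bfC)$, and any conclusion drawn for $(\phi\circ c_P)(E_{1d})$ translates into the corresponding conclusion for $\phi(E_{ab})$.

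For $d\ge4$, I would first apply \cref{lem:unipotent}\ref{enum:unipotent-i} to the restriction of $\phi\circ c_P$ to $\bfU_d$, as $P$ ranges over the permutation matrices above, to conclude that $\phi(E_{ab})$ is unipotent for every $a\neq b$. This verifies the hypothesis of \cref{lem:unipotent}\ref{enum:unipotent-ii} both for $\phi|_{\bfU_d}$ and for each twist $(\phi\circ c_P)|_{\bfU_d}$, so a second sweep over all such $P$ yields $\phi(E_{ab})=\id$ for every $a\neq b$, as desired.

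The residual and main obstacle is the case $d=3,\ m=2$, in which \cref{lem:unipotent}\ref{enum:unipotent-i} is unavailable and has to be replaced by \cref{lem:dim3}. Applied to $(\phi\circ c_P)|_{\bfU_3}$, the latter leaves two options: either $\phi(E_{ab})$ is unipotent, or one lands in the sporadic case \ref{enum:dim3firstii}, in which $\phi(E_{13})=-\id$ and $\phi(E_{23})$ is conjugate to $\bigl(\begin{smallmatrix}0&\nu\\1&0\end{smallmatrix}\bigr)$ for some $\nu\in\bfC^\times$. I plan to rule out the sporadic alternative using the Steinberg relation $[E_{21},E_{13}]=E_{23}$ in $\SL_3(\bfZ)$: since $\phi(E_{13})=-\id$ is central in $\GL_2(\bfC)$, the commutator on the left-hand side maps to $\id$, forcing $\phi(E_{23})=\id$; but this contradicts $\phi(E_{23})$ being conjugate to the above matrix, which has trace $0$ and determinant $-\nu\neq0$. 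Hence only the unipotent option survives, and the argument concludes as in the $d\ge4$ case via \cref{lem:unipotent}\ref{enum:unipotent-ii}.
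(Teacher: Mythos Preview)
Your argument is correct and follows essentially the same route as the paper's proof: reduce to $\phi(E_{1d})=\id$ via \cref{lem:unipotent}\ref{enum:unipotent-i} and \ref{enum:unipotent-ii} for $d\ge4$, handle $m=1$ by perfectness, and use \cref{lem:dim3} for $(d,m)=(3,2)$. The only substantive difference is how you exclude case \ref{enum:dim3firstii}: the paper observes that $\phi(E_{12})$ (with eigenvalues $\pm\mu$, distinct) and $\phi(E_{13})=-\id$ cannot be conjugate in $\GL_2(\bfC)$, contradicting the conjugacy of $E_{12}$ and $E_{13}$ in $\SL_3(\bfZ)$; you instead use the Steinberg relation $[E_{21},E_{13}]=E_{23}$ together with centrality of $-\id$. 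Both arguments are short and valid. One notational point: when you write ``$\phi(E_{13})=-\id$'' after applying \cref{lem:dim3} to $(\phi\circ c_P)|_{\bfU_3}$, you mean $(\phi\circ c_P)(E_{13})=-\id$; the Steinberg-relation step then goes through verbatim with $\phi\circ c_P$ in place of $\phi$, since $\phi\circ c_P$ is itself a homomorphism defined on all of $\SL_3(\bfZ)$.
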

Under the additional assumption that $\phi$ factors through $\SL_m(\bfZ)\le \GL_m(\bfC)$, this corollary is proved in \cite[Lemma 3]{weinberger}  using superrigidity and the congruence subgroup property.
\begin{proof}[Proof of \cref{cor:big-to-small}]
	If $d \geq 4$ then $\phi(E_{1d})$ is unipotent by \cref{lem:unipotent} \ref{enum:unipotent-i} and since the $E_{ij}$ are conjugate in $\SL_d(\bfZ)$ so are all $\phi(E_{ij})$. We then apply \cref{lem:unipotent} \ref{enum:unipotent-ii} to see that $\phi(E_{1d})$ is trivial, so also the conjugates $\phi(E_{ij})$ are. As the $E_{ij}$ generate $\SL_d(\bfZ)$ the result follows. 
	For $(d,m) = (3,1)$ we use that $\SL_3(\bfZ)$ is perfect (see \cref{lem:low-degree-homology}) and $\GL_1(\bfC)$ is abelian. For $(d,m) = (3,2)$ we apply the first part of \cref{lem:dim3}: in case \ref{enum:dim3firsti} we proceed as for $d=4$ and the case \ref{enum:dim3firstii} is ruled out because the images of $E_{12}$ and $E_{13}$ are not conjugate.
\end{proof}

\begin{cor}\label{cor:apply-lemma}Fix $d\ge3$ and a nontrivial homomorphism $\phi \colon \SL_d(\bfZ)\ra\SL_d(\bfC)$.
\begin{enumerate}
\item\label{enum:apply-lemma-i} If $d\ge4$, then for all $i\neq j$ the matrix $\phi(E_{ij})$ is unipotent and $\phi(E_{ij})-\id$ has rank $1$. Moreover, after possibly precomposing $\phi$ with $(-)^{-t}$, the matrices $\phi(E_{1d}),\ldots,\phi(E_{d-1,d})$ all have the same fixed set.
\item\label{enum:apply-lemma-ii}  If $d=3$, then the same conclusion holds under the additional assumption $\im(\phi)\subset \SL_d(\bfZ)$.
\end{enumerate}
\end{cor}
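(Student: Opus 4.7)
The plan is to restrict $\phi$ to the unipotent upper-triangular subgroup $\bfU_d\le\SL_d(\bfZ)$ and feed the restriction into \cref{lem:unipotent} (when $d\ge4$) or \cref{lem:dim3} (when $d=3$). Two facts about $\SL_d(\bfZ)$ will be used throughout: all elementary matrices $E_{ij}$ with $i\neq j$ are mutually conjugate in $\SL_d(\bfZ)$ (via signed permutation matrices, chosen to have determinant $1$), and $\SL_d(\bfZ)$ is generated by the $E_{ij}$ and is perfect for $d\ge3$ by \cref{lem:low-degree-homology}, so that no nontrivial homomorphism out of $\SL_d(\bfZ)$ can have abelian image.

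For part \ref{enum:apply-lemma-i}, first I would rule out that $\phi(E_{1d})$ is a scalar or equal to the identity: in either of those cases conjugacy would force every $\phi(E_{ij})$ to be the same scalar, the image of $\phi$ would be abelian, and perfectness would force $\phi$ to be trivial, contradicting the hypothesis. For $d\ge4$, \cref{lem:unipotent} \ref{enum:unipotent-i} then yields that $\phi(E_{1d})$ is unipotent, and conjugacy spreads this to every $\phi(E_{ij})$ with $i\neq j$. \cref{lem:unipotent} \ref{enum:unipotent-iii} upgrades this to a rank-one statement for $\phi(E_{1d})-\id$, and conjugacy again extends this to all $\phi(E_{ij})-\id$. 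Finally, \cref{lem:unipotent} \ref{enum:unipotent-iv} applied to $\phi|_{\bfU_d}$ furnishes, after possibly replacing $\phi$ by $\phi\circ(-)^{-t}$, a common fixed set for $\phi(E_{1d}),\ldots,\phi(E_{d-1,d})$.

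For part \ref{enum:apply-lemma-ii}, the argument above fails only at the step that uses \cref{lem:unipotent} \ref{enum:unipotent-i}, which is not available in dimension three; once unipotence of $\phi(E_{13})$ is established, \cref{lem:unipotent} \ref{enum:unipotent-iii} and \ref{enum:unipotent-iv} will apply verbatim. To prove unipotence I would feed $\phi|_{\bfU_3}$ into the $m=3$ part of \cref{lem:dim3}. Its case (d) is excluded immediately, because it would require $\phi(E_{13})$ to equal a primitive cube root of unity times the identity, which is not a matrix in $\SL_3(\bfZ)$. Its case (b) already gives unipotence. The main obstacle will be case (c), where $\phi|_{\bfU_3}\cong\phi_1\oplus\phi_2$. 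Here I would pass to $\phi_2\colon\bfU_3\to\GL_2(\bfC)$ and apply the $m=2$ part of \cref{lem:dim3}: its case \ref{enum:dim3firsti} gives $\phi_2(E_{13})$ unipotent, and combined with $\phi_1(E_{13})=1$ (which holds because $E_{13}=[E_{12},E_{23}]$ and $\phi_1$ has abelian target) this yields $\phi(E_{13})$ unipotent. Its case \ref{enum:dim3firstii} forces $\phi(E_{13})$ to be an involution with eigenvalues $1,-1,-1$, and I expect to rule it out by combining the integrality of the characteristic polynomials of $\phi(E_{12})$ and $\phi(E_{23})$ that is forced by $\im(\phi)\subset\SL_3(\bfZ)$ with the Mennicke normal subgroup theorem \cite[Corollary 1, p.~36]{Mennicke}, which forces such a $\phi$ to have finite image and reduces the problem to ruling out a short list of finite subgroups of $\SL_3(\bfZ)$.
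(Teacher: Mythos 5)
Your treatment of part (i) for $d\ge4$ matches the paper: you rule out that $\phi(E_{1d})$ is a scalar using conjugacy and perfectness of $\SL_d(\bfZ)$, then cite \cref{lem:unipotent} \ref{enum:unipotent-i}, \ref{enum:unipotent-iii}, \ref{enum:unipotent-iv} and spread the conclusions to all $\phi(E_{ij})$ by conjugacy. For $d=3$ you also correctly organize the argument around the $m=3$ case analysis in \cref{lem:dim3}, and your handling of cases \ref{enum:dim3b}, \ref{enum:dim3d}, and the subcase of \ref{enum:dim3c} where $\phi_2$ falls into \cref{lem:dim3} \ref{enum:dim3firsti} agrees with the paper.

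The divergence, and the genuine gap, is the final subcase (\ref{enum:dim3c} with $\phi_2$ in case \ref{enum:dim3firstii}), where you correctly pin down $\phi(E_{13})$ as order~$2$ with eigenvalues $\{1,-1,-1\}$ but then only sketch how to derive a contradiction (``I expect to rule it out\ldots''). The paper's route here is more elementary and is the one you should articulate: since $E_{12}$, $E_{23}$, $E_{13}$ are all conjugate \emph{in} $\SL_3(\bfZ)$ and $\im(\phi)\subset\SL_3(\bfZ)$, the three matrices $\phi(E_{12}),\phi(E_{23}),\phi(E_{13})$ must be mutually conjugate in $\SL_3(\bfZ)$; in cases \ref{enum:dim3c}/\ref{enum:dim3d} one checks they are not --- almost always by comparing eigenvalue multisets, and in the single exceptional configuration (the one appearing in \cref{rmk:counterexample}) by reducing modulo $2$. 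You never invoke this stronger $\SL_3(\bfZ)$-conjugacy constraint; you only use the weaker fact that the characteristic polynomials are integral, which is why you are forced to reach for Mennicke. Your alternative route \emph{can} be completed: once $\phi(E_{13})$ has order $2$, $\ker\phi$ is nontrivial, so by \cite[Corollary 1, p.~36]{Mennicke} it has finite index (the centre of $\SL_3(\bfZ)$ is trivial), hence $\im(\phi)$ is a finite \emph{perfect} subgroup of $\SL_3(\bfZ)$; but finite subgroups of $\GL_3(\bfZ)$ have order at most $48$, while a nontrivial finite perfect group has order at least $60$, so $\phi$ would be trivial. This works, but it is heavier machinery than the paper's direct conjugacy check and you should be aware that the finite-image representation of \cref{rmk:counterexample} shows the conclusion genuinely depends on $\im(\phi)\subset\SL_3(\bfZ)$ rather than $\SL_3(\bfC)$, so you cannot afford to be vague about where that assumption enters.
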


\begin{proof}We begin with two observations based on the fact that $E_{ij}\in\SL_d(\bfZ)$ is for all $i\neq j$ conjugate to $E_{1d}$. Firstly, to show the first claim of \ref{enum:apply-lemma-i} and \ref{enum:apply-lemma-ii}, it suffices to consider $\phi(E_{1d})$. Secondly, $\phi(E_{1d})$ is nontrivial since otherwise $\phi$ were trivial as $\SL_d(\bfZ)$ is generated by the $E_{ij}$.

In the case $d\ge4$, it suffices to prove that $\phi(E_{1d})$ is not a scalar, for then everything follows from  \cref{lem:unipotent}, using that $E_{1d}$ is conjugate in $\SL_d(\bfZ)$ to $E_{ij}$ for any $i\neq j$. If $\phi(E_{1d})$ were a scalar, then all $\phi(E_{ij})$ are scalars, so $\phi$ would have image in scalar matrices because the $E_{ij}$ generate $\SL_d(\bfZ)$. But since $E_{1d}$ is a commutator and scalar matrices commute, this would imply $\phi(E_{1d})=[\phi(E_{12}),\phi(E_{2d})]=\id$, which is not the case.

Next we consider the case $d=3$. To show the case $m=3$, for which we imposed the additional assumption $\im(\phi)\le \SL_d(\bfZ)$. It suffices by \cref{lem:unipotent} to prove that the nontrivial matrix $\phi(E_{13})$ is unipotent which we prove by contradiction. We consider the restriction of $\phi$ to $\langle E_{12},E_{23}\rangle\cong\bfU_3$ and consult the classification in \cref{lem:dim3}. Since we assumed that $\phi(E_{13})\neq \id$ is not unipotent, we do not need to consider the case \ref{enum:dim3b}. Cases \ref{enum:dim3c} and \ref{enum:dim3d} of \cref{lem:dim3} can be excluded by showing that for these representations the matrices $\phi(E_{12}),\phi(E_{23}),\phi(E_{13})$ are not all conjugate in $\SL_3(\bfZ)$. In almost all cases this can be seen considering their eigenvalues, except in the case
\[\phi(E_{12})=
\left(\begin{array}{ccc}
1&0&0\\
0&-1&0\\
0&0&-1
\end{array}\right)
\>\>\>\>\>\>
\phi(E_{23})=
\left(\begin{array}{ccc}
0&1&0\\
1&0&0\\
0&0&-1
\end{array}\right).\]
Also these matrices are not conjugate in $\SL_3(\bfZ)$ which one can see by reducing modulo $2$.
\end{proof}

\begin{thm}\label{thm:rep}
Fix $d\ge3$ and a nontrivial homomorphism $\phi:\SL_d(\bfZ)\ra\SL_d(\bfZ)$. There exist linearly independent vectors $v_1,\ldots,v_d\in\bfZ^d$ so that, after possibly after precomposing $\phi$ with $(-)^{-t}$, the image of $\phi$ preserves the lattice $\Lambda=\bfZ\{v_1,\ldots, v_d\}$ and for all $A\in\SL_d(\bfZ)$ the matrix of the restriction $\phi(A)|_{\Lambda}$ with respect to the basis $v_1,\ldots,v_d$ is $A$.
\end{thm}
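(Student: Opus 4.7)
The plan is to construct the basis $v_1,\ldots,v_d$ explicitly in three stages: identify the hyperplane $H$ on which the ``last column'' elementary matrices act trivially, normalize coordinates so that $\phi$ restricted to the parabolic $\SL_{d-1}(\bfZ)\ltimes\bfZ^{d-1}$ becomes the identity, and then deduce the same for the remaining generators $E_{di}$.

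First I would apply \cref{cor:apply-lemma}: after possibly replacing $\phi$ by $\phi\circ(-)^{-t}$, the matrices $u_i\coloneq\phi(E_{id})$ for $i<d$ are all unipotent with rank-$1$ correction and share a common fixed hyperplane $H\subset\bfC^d$, which is $\bfQ$-rational as the intersection of kernels of integer matrices. I would then pick $v_d\in\bfZ^d\setminus H$ and set $v_i\coloneq u_i(v_d)-v_d\in H\cap\bfZ^d$ for $i<d$. Linear independence of $v_1,\ldots,v_{d-1}$ follows by extending $e_i\mapsto v_i$ to a linear map $T\colon\bfC^{d-1}\to H$ and observing, via the standard conjugation action of $\SL_{d-1}(\bfZ)=\langle E_{ij}:i,j<d\rangle$ on $\bfZ^{d-1}=\langle E_{id}\rangle$, that $\ker T$ is an $\SL_{d-1}(\bfC)$-subrepresentation of the irreducible standard representation on $\bfC^{d-1}$. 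Since $T\ne 0$ (else every $u_i=I$, making $\phi$ trivial by conjugacy of elementary matrices), $\ker T=0$, so $v_1,\ldots,v_d$ form a $\bfC$-basis.

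Next, letting $P\in M_d(\bfZ)\cap\GL_d(\bfQ)$ have columns $v_1,\ldots,v_d$, I would set $\psi(A)\coloneq P^{-1}\phi(A)P$ and aim to show $\psi=\id$. By construction $\psi(E_{id})=E_{id}$. Translating $\sigma E_{id}\sigma^{-1}=\prod_k E_{kd}^{(\sigma e_i)_k}$ under $\phi$ shows that for $\sigma\in\SL_{d-1}(\bfZ)$ one has $\psi(\sigma)=\left(\begin{smallmatrix}c_\sigma\sigma & b_\sigma\\ 0 & c_\sigma\end{smallmatrix}\right)$, where $c_\sigma$ is the eigenvalue of $\psi(\sigma)$ on $\bfC^d/H$; it is a rational $d$-th root of unity (by $\bfQ$-rationality of $H$ and $\det\psi(\sigma)=1$), hence $c_\sigma=1$ when $d$ is odd, and $c_\sigma=1$ when $d\ge 4$ is even by perfectness of $\SL_{d-1}(\bfZ)$. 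Thus $\psi(\sigma)=\left(\begin{smallmatrix}\sigma & b_\sigma\\ 0 & 1\end{smallmatrix}\right)$, with $\sigma\mapsto b_\sigma\in\bfQ^{d-1}$ a $1$-cocycle for the standard action. The main obstacle is the vanishing $\oH^1(\SL_{d-1}(\bfZ);\bfQ^{d-1})=0$: for $d-1\ge 3$ this is standard, but for $d-1=2$ it requires a direct check using the amalgamated presentation $\SL_2(\bfZ)=\bfZ/4*_{\bfZ/2}\bfZ/6$. Once this holds, $b_\sigma=(I-\sigma)\tilde\beta$ for some $\tilde\beta\in\bfQ^{d-1}$; letting $\beta\in H\otimes\bfQ$ correspond to $\tilde\beta$ under the identification $v_i\leftrightarrow e_i$, I would replace $v_d$ by $N(v_d+\beta)$ and each $v_i$ ($i<d$) by $Nv_i$ for a positive integer $N$ that clears denominators. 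This yields $\psi(\sigma)=\sigma$ for all $\sigma\in\SL_{d-1}(\bfZ)$ while preserving $\psi(E_{id})=E_{id}$.

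Finally, for each $i<d$ the matrix $\psi(E_{di})$ is unipotent with rank-$1$ correction by \cref{cor:apply-lemma}. The commutations $[E_{di},E_{jk}]=I$ for $j\ne i$ and $k<d$, translated under $\psi$, force $\psi(E_{di})=I+\alpha_i\,e_d e_i^T$ for some $\alpha_i\in\bfQ$. Applying $\psi$ to $[E_{di},E_{ij}]=E_{dj}$ shows $\alpha_i$ is independent of $i$; the relation $[E_{di},E_{jd}]=E_{ji}^{-1}$ (whose right-hand side is already fixed by the previous step) then pins down $\alpha_i=1$ via a short direct computation. Thus $\psi=\id$ on all generators of $\SL_d(\bfZ)$, so $\phi(A)=PAP^{-1}$; consequently $\Lambda\coloneq P\bfZ^d=\bfZ\{v_1,\ldots,v_d\}$ is $\phi$-invariant and $\phi(A)|_\Lambda$ has matrix $A$ in the basis $v_1,\ldots,v_d$, as required.
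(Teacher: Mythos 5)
Your proposal follows a genuinely different route from the paper's. The paper constructs the basis directly from the lines $L_i=\bigcap_{j\ne i}H_j$ where $H_j$ is the common fixed hyperplane of the matrices $\phi(E_{kj})$, $k\neq j$: it picks a nonzero integer vector $v_d\in L_d$ and sets $v_i=(\phi(E_{i,i+1})-\id)(v_{i+1})\in L_i$, which normalises $\phi$ on all of $\bfU_d$ at once, and the lower-triangular generators are then treated using a single braid relation. You instead choose an arbitrary $v_d\notin H_d$, build $v_1,\ldots,v_{d-1}$ from the single hyperplane $H_d$ only, and then invoke a cocycle-splitting argument to normalise the $\SL_{d-1}(\bfZ)$-block. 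What the paper's choice of $v_d\in L_d$ buys is that the vanishing $H^1(\SL_{d-1}(\bfZ);\bfQ^{d-1})=0$ is never needed --- a point the paper actively cares about, since its Remark after \cref{thm:SLdZ} emphasises that the proof avoids such input.

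Two concrete issues. First, you assert that $H^1(\SL_{d-1}(\bfZ);\bfQ^{d-1})=0$ for $d-1\ge 3$ is ``standard'' without argument. For $d-1$ even it is immediate from the central element $-\id$, which acts trivially on group cohomology but by $-1$ on the coefficients. For $d-1$ odd this trick is unavailable, since $-\id\notin\SL_{d-1}(\bfZ)$; the vanishing is still true and can be proved elementarily (e.g.\ by first trivialising the cocycle on the finite group of signed permutation matrices of determinant one and then exploiting the Steinberg relations), but this is a nontrivial argument that needs to be supplied, not a one-line citation.

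Second, and more seriously, the last paragraph contains a step that is false for $d=3$. You claim the commutations $[E_{di},E_{jk}]=\id$ for $j\ne i$, $j,k<d$, together with unipotence and the rank-$1$ condition, force $\psi(E_{di})=\id+\alpha_i\,e_de_i^T$. For $d\ge 4$ this is correct, but for $d=3$ and, say, $i=1$ there is only the single commutation $[E_{31},E_{21}]=\id$, and it does not pin down the form: for instance $\psi(E_{31})=\id+\beta e_2e_1^T+\gamma e_3e_1^T$ or $\id+\beta e_2e_1^T+\gamma e_2e_3^T$ satisfy all the constraints you list. The desired shape can be extracted from the relations $[E_{di},E_{jd}]=E_{ji}^{-1}$ and $[E_{di},E_{ij}]=E_{dj}$, but these must be brought in \emph{before} the form is known, whereas your sketch uses them only afterwards to pin down the scalar $\alpha_i$. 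As written, the argument does not close the case $d=3$.
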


\begin{rem}\label{rmk:counterexample}
One might suspect that given $\phi:\SL_d(\bfZ)\ra\SL_d(\bfC)$ there exists a basis $v_1,\ldots,v_d$ for $\bfC^d$ so that the same conclusion of \cref{thm:rep} holds (this is claimed in the MathOverflow post mentioned in Remark \ref{rmk:bader}). This is not the case. For example, there is a nontrivial representation $\phi:\SL_3(\bfZ)\ra\SL_3(\bfC)$ with finite image, constructed by setting
\[\begin{array}{llll}
\phi(E_{12})=&
\left(\begin{array}{ccc}
1&0&0\\
0&-1&0\\
0&0&-1
\end{array}\right)
&
\phi(E_{23})=&
\left(\begin{array}{ccc}
0&1&0\\
1&0&0\\
0&0&-1
\end{array}\right)
\\
\phi(E_{32})=&
\left(\begin{array}{ccc}
-1&0&0\\
0&0&1\\
0&1&0
\end{array}\right)&
\phi(E_{21})=&
\left(\begin{array}{ccc}
-\frac{1}{2}&-\frac{1}{2}&\frac{-1-\sqrt{-7}}{4}\\
-\frac{1}{2}&-\frac{1}{2}&\frac{1+\sqrt{-7}}{4}\\
\frac{-1+\sqrt{-7}}{4}&\frac{1-\sqrt{-7}}{4}&0
\end{array}\right)
\end{array}\]
and then defining $\phi(E_{13})\coloneqq[\phi(E_{12}),\phi(E_{23})]$ and $\phi(E_{31})\coloneqq[\phi(E_{32}),\phi(E_{21})]$. One can then check directly that this extends to a morphism $\SL_3(\bfZ)\ra\SL_3(\bfQ(\sqrt{-7}))\subset\SL_3(\bfC)$ by checking that these matrices satisfy the relations in the standard presentation of $\SL_3(\bfZ)$ in terms of $E_{ij}$ (see \cite[Corollary 10.3]{MilnorAlgebraic}). This peculiar representation has finite image because for each $i\neq j$, the matrix $\phi(E_{ij})$ has order 2, and the subgroup generated by $\{E_{ij}^2\}$ has finite index in $\SL_3(\bfZ)$ by a general theorem of Tits \cite{tits} (see also \cite[Theorem 3]{meiri}).
\end{rem}

\begin{proof}[Proof of Theorem \ref{thm:rep}] Fix a nontrivial homomorphism $\phi:\SL_d(\bfZ)\ra\SL_d(\bfZ)$. We write $u_{ij}=\phi(E_{ij})$, considered as a matrix in $\SL_d(\bfC)$.  After possibly precomposing $\phi$ with $(-)^{-t}$, we know from \cref{cor:apply-lemma}, that for $i\neq j$, the matrix $u_{ij}$ is unipotent and $u_{ij}-\id$ has rank $1$, and that $H_{1d}=H_{2d}=\cdots=H_{d-1,d}$ where $H_{ij}\le \bfC^d$ be the fixed set of the matrix $u_{ij}$ for $i\neq j$. Note that each $H_{ij}$ is $(d-1)$-dimensional, since $u_{ij}-\id$ has rank 1. Using the fact that for each fixed $1\le k\le d$, the matrices $E_{1k}, E_{2k},\ldots,E_{dk}$ (skipping $E_{kk}$) are simultaneously conjugate to $E_{1d},\ldots,E_{d-1,d}$, we find that also the $d-1$ hyperplanes $H_{1k},H_{2k},\ldots,H_{dk}$ (skipping $H_{kk}$) all agree. We abbreviate this hyperplane by $H_k$. Next we claim that the intersection of hyperplanes
$L_i=H_1\cap\cdots\cap \what H_i\cap\cdots\cap H_d$ for $1\le i\le d$ are all lines. For this it suffices to show that $H_1\cap\cdots\cap H_d$ is trivial. Assume by contradiction that this intersection is nontrivial. By construction, it is the common fixed set for the $u_{ij}$ for all $i\neq j$, so it is in fact fixed by the whole image of $\phi$ since the $u_{ij}=\phi(E_{ij})$ generate the image because the $E_{ij}$ generate $\SL_d(\bfZ)$. Moreover, since the $H_i$ are defined over $\bfQ$, also $L\coloneq H_1\cap\cdots\cap H_d\cap\bfZ^d$ is nontrivial, so the free abelian group $\bfZ^d/L$ has rank $<d$. Combining this with \cref{cor:big-to-small}, we see that the morphism $\SL_d(\bfZ)\ra \SL(\bfZ^d/L)$ induced by $\phi$ is trivial, so $\phi$ factors over the additive group $\Hom(\bfZ^d/L, L)$. The latter is abelian, so $\phi$ must be trivial since $\SL_d(\bfZ)$ is perfect (see \cref{lem:low-degree-homology}). This contradicts our choice of $\phi$.

\smallskip

\noindent {\it Claim.} The image of $u_{ij}-\id$ is $L_i$.

\smallskip

\noindent {\it Proof of Claim.}
For definiteness, we prove the statement for $u_{1d}$. Since $u_{1d}-\id$ has rank $1$ and $L_1=H_2\cap\ldots \cap H_d$ is $1$-dimensional, it suffices to show that the image of $u_{1d}-\id$ is contained in $H_j$ for all $j\neq 1$. Recall that $H_j=H_{1j}$ is the fixed set of $u_{1j}$. Since $u_{1j}$ commutes with $u_{1d}$, the matrix $u_{1j}$ preserves the image of $u_{1d}-\id$, but since this image is only one dimensional, it is an eigenspace for $u_{1j}$, which implies $\im(u_{1d}-\id)\subset H_{1j}$ since $u_{1j}$ is unipotent. This proves the claim.

\smallskip

 Now we construct the basis $v_1,\ldots,v_d$. Fix a nonzero vector $v_d\in L_d$ which we may choose to be an integer vector as $L_d$ is defined over $\bfQ$ since $\phi$ has image in $\SL_d(\bfZ)$. Now define inductively $v_{i}=(u_{i,i+1}-\id)(v_{i+1})\in L_{i}$. Note that the $v_i$ are integer vectors as $u_{i,i+1}=\phi(E_{i,i+1})\in\SL_d(\bfZ)$. Moreover, each $v_i$ is nonzero: if $v_i$ were trivial then $v_{i+1}$ would be contained in $L_{i+1}\cap H_{i+1}=H_1\cap\cdots\cap H_d$ which we saw above is trivial, so we get $v_{i+1}=0$ and inductively $v_d=0$ which is not true. Now we examine what properties the vectors $v_1,\ldots,v_d$ have. First observe that they form a basis for $\bfC^d$, by the general fact that if $H_1,\ldots,H_d$ are hyperplanes of $\bfC^d$ with trivial intersection, then a choice of nonzero vector from each of the lines $L_i = H_1\cap\cdots\cap\what{H_i}\cap\cdots \cap H_d$ gives a basis for $\bfC^d$. By construction, with respect to the basis $v_1,\ldots,v_d$, the matrix of $u_{i,i+1}=\phi(E_{i,i+1})$ is $E_{i,i+1}$. Now using the commutator relations in $\bfU_d$, we conclude that after this change of basis the restriction of $\phi$ to upper triangular matrices is the inclusion, so to finish the proof suffices to show the same for the lower triangular matrices since $\SL_d(\bfZ)$ is generated by upper and lower triangular matrices. As for upper triangular matrices, it suffices to consider $E_{i+1,i}$ for every $i$.  By construction, $\phi(E_{i+1,i})$ has fixed set
$\langle v_1,\ldots,\what{v_i},\ldots,v_d\rangle$ and $(\phi(E_{i+1,i})-\id)(v_i)=a_i\cdot v_{i+1}$ for some scalar $a_i$, so we are left to show $a_i=1$. This follows from the braid relation $E_{i,i+1}^{-1}\>E_{i+1,i}\>E_{i,i+1}^{-1}=E_{i+1,i}\>E_{i,i+1}^{-1}\>E_{i+1,i}$.
\end{proof}

\begin{proof}[Proof of Theorem \ref{thm:SLdZ}]
Fix a nontrivial homomorphism $\phi \colon \SL_d(\bfZ)\ra\SL_d(\bfZ)$ and let $v_1,\ldots,v_d\in\bfZ^d$ be the linearly independent vectors promised  by \cref{thm:rep}, so that possibly after precomposing $\phi$ with $(-)^{-t}$, the matrix $\phi(A)$ for $A\in\SL_d(\bfZ)$ preserves the lattice $\Lambda=\bfZ\{v_1,\ldots,v_d\}$, and the restriction $\phi(A)|_{\Lambda}$ is represented by the matrix $A$ when written in the basis $v_1,\ldots,v_d$. In particular, this has as consequence that every orientation-preserving automorphism of $\Lambda\le \bfZ^d$ extends to an orientation-preserving automorphism of $\bfZ^d$. We claim that this in turn implies $\Lambda=\ell\bfZ^d$ for some $\ell>0$. Dividing the basis by $\ell$, this would show that we can choose  $v_1,\ldots,v_d$ to form a basis of $\bfZ^d$, so $\phi$ is given by conjugation by an element of $\GL_d(\bfZ)$. That  $\Lambda=\ell\cdot \bfZ^d$ for some $\ell>0$ follows from two facts: (a) for every non-characteristic subgroup $L\subset\bfZ^d$ of full rank, there exists an (orientation-preserving) automorphism of $L$ that does not extend to $\bfZ^d$, so $\Lambda$ has to be characteristic, and (b) every characteristic subgroup $L\le\bfZ^d$ of full rank has the form $\ell\cdot \bfZ^d$ for some $\ell>0$. To see these two facts, we fix a subgroup $L\le \bfZ^d$ of full rank. By the elementary divisor theorem, there is a basis $b_1,\ldots, b_d$ of $\bfZ^d$ and natural numbers $\ell_1,\ldots,\ell_d$ such that $\ell_1\cdot b_1,\ldots ,\ell_d\cdot b_d$ is a basis of $L$. If $L$ is non-characteristic, then $\ell_i\neq \ell_j$ for some $i$ and $j$ (since $\ell\cdot \bfZ^d\le \bfZ^d$ is clearly characteristic), so the automorphism of $L$ that interchanges $\ell_i\cdot b_i$ and $\ell_j\cdot b_j$ does not extend to $\bfZ^d$ (by interchanging a second pair of basis vectors we also find an orientation-preserving example of such an automorphism). This shows (a). Moreover, if we assume $\ell_i\neq \ell_j$ for some $i$ and $j$, then the automorphism of $\bfZ^d$ that interchanges $b_i$ and $b_j$ does not restrict to $L$, so $L$ cannot be characteristic. This shows (b).
\end{proof}

\bibliographystyle{amsalpha}
\bibliography{literature}
\vspace{0.1cm}
\end{document}